\newlist{properties}{enumerate}{1}
\setlist[properties]{label=(P\arabic*),ref=P\arabic*}
\crefname{propertiesi}{property}{properties}
\Crefname{propertiesi}{Property}{Properties}
\newcommand{\Clu}{\ensuremath{\mathfrak{Cl}_{\mathrm{U}}}}
\crefname{enumi}{}{}
\newcommand{\Ql}{\ensuremath{\overline{\mathbb{Q}}_{\ell}}}
\renewcommand{\epsilon}{\varepsilon}
\newenvironment{assumption}{
\begin{center}

\begin{tabular}{!{\vrule width 0.75pt}p{15cm}!{\vrule width 0.75pt}}
\noalign{\hrule height 0.75pt}
\cellcolor[gray]{0.85}
\ignorespaces
}
{
\\[.5ex]
\noalign{\hrule height 0.75pt}
\end{tabular}
\end{center}
\ignorespacesafterend
}
\renewcommand{\cref}{\Cref}
\title{Finding Characters Satisfying a Maximal Condition for Their Unipotent Support}
\author{Jay Taylor}
\address{FB Mathematik, TU Kaiserslautern, Postfach 3049, 67653 Kaiserslautern, Germany.}
\email{taylor@mathematik.uni-kl.de}
\begin{document}
\start
\begin{abstract}
In this article we extend independent results of Lusztig and H\'{e}zard concerning the existence of irreducible characters of finite reductive groups, (defined in good characteristic and arising from simple algebraic groups), satisfying a strong numerical relationship with their unipotent support. Along the way we obtain some results concerning quasi-isolated semisimple elements.
\end{abstract}

\section{Introduction}
Throughout this article $\bG$ will be a simple algebraic group over $\mathbb{K} = \overline{\mathbb{F}}_p$\index{K@$\mathbb{K}$}, an algebraic closure of the finite field $\mathbb{F}_p$, where $p$ is a good prime for $\bG$. Furthermore, we assume $\bG$ is defined over $\mathbb{F}_q \subset \overline{\mathbb{F}}_p$ (where $q$ is a power of a $p$) and $F:\bG\to \bG$\index{F@$F$} is the associated Frobenius endomorphism. Throughout we will denote an algebraic group in bold and its corresponding fixed point subgroup in roman, for instance $G := \bG^F$.

Let us denote by $\Clu(\bG)$\index{CluG@$\Clu(\bG)$} the set of all unipotent conjugacy classes of $\bG$ and by $\Clu(\bG)^F$\index{CluGF@$\Clu(\bG)^F$} all those classes which are $F$-stable. In \cite[\S13.4]{lusztig:1984:characters-of-reductive-groups} and \cite[\S10]{lusztig:1992:a-unipotent-support} Lusztig defined combinatorially using $j$-induction and the Springer correspondence a map $\Phi_G : \Irr(G) \to \Clu(\bG)^F$. He had previously conjectured in \cite{lusztig:1980:some-problems-in-the-representation-theory} that for any $\bG$ and any irreducible character $\chi \in \Irr(G)$ there should exist a unique class $\mathcal{O}_{\chi} \in \Clu(\bG)^F$ of maximal dimension satisfying
\begin{equation*}
\sum_{g \in \mathcal{O}_{\chi}^F} \chi(g) \neq 0.
\end{equation*}
It was shown in \cite{lusztig:1992:a-unipotent-support}, (and later \cite{geck:1996:on-the-average-values}), that in good characteristic $\mathcal{O}_{\chi}$ always exists and that $\Phi_G(\chi) = \mathcal{O}_{\chi}$. We call the class $\mathcal{O}_{\chi}$ the \emph{unipotent support} of $\chi$.

Recall that Lusztig has shown for each $\chi \in \Irr(G)$ there is a well defined integer $n_{\chi}$\index{nchi@$n_{\chi}$} such that $n_{\chi}\cdot \chi(1)$ is a polynomial in $q$ with integer coefficients. If $x \in \bG$ then we write $A_{\bG}(x)$\index{AGx@$A_{\bG}(x)$} for the component group $C_{\bG}(x)/C_{\bG}(x)^{\circ}$, furthermore if $F(x) = x$ then we denote again by $F$ the automorphism of $A_{\bG}(x)$ induced by $F$. We will denote by $A_G(x)$\index{AGx@$A_G(x)$} the quotient group $C_{\bG}(x)^F/{C_{\bG}(x)^{\circ}}^F$ which may be identified with the fixed point group $A_{\bG}(x)^F$. In \cite[\S13.4]{lusztig:1984:characters-of-reductive-groups} Lusztig gave various properties of the map $\Phi_G$ which should hold when $Z(\bG)$ is connected. For our concerns the most important of these are that: $\Phi_G$ is surjective, $n_{\chi}$ divides $|A_{\bG}(u)|$ for $u \in \mathcal{O}_{\chi}$ and finally that there exists for each $\mathcal{O} \in \Clu(\bG)^F$ at least one $\chi \in \Phi_G^{-1}(\mathcal{O})$ such that $n_{\chi} = |A_{\bG}(u)|$ for $u \in \mathcal{O}$.

This intriguing numerical property has provided several interesting applications to the representation theory of $G$, namely via Kawanaka's theory of generalised Gelfand--Graev representations, (see for instance \cite[\S3]{geck:1999:character-sheaves-and-GGGRs} and \cite[Theorem 4.5]{geck-hezard:2008:unipotent-support}). Unfortunately, at the time of writing, proving the existence of such characters seems only possible by carrying out a case by case check and the details of this were omitted from \cite[\S13.4]{lusztig:1984:characters-of-reductive-groups}. A detailed case by case analysis was provided by H\'{e}zard in his PhD thesis \cite{hezard:2004:thesis} and also partly by Lusztig in \cite{lusztig:2009:unipotent-classes-and-special-Weyl}. Note that in the latter reference necessary questions concerning $F$-stability were not addressed, however results concerning groups with a disconnected centre were considered.

It is the main goal of this paper, (using extensively the work of Lusztig and H\'{e}zard), to extend in a natural way the existence of characters satisfying $n_{\chi} = |A_{\bG}(u)|$ for their unipotent support to the case where $\bG$ has a disconnected centre. Along the way we also restate the results of H\'{e}zard so that all simple groups are treated in our paper. Note that the existence of the characters mentioned above will follow from our main result, (\cref{prop:A}), as is shown in \cite[Theorem 3.1]{taylor:2011:on-unipotent-supports}. In \cite{taylor:2011:on-unipotent-supports} it is also shown that this result gives direct applications to the representation theory of those groups $G$ along the same lines as those obtained by Geck and Geck--H\'{e}zard.

The logical layout of this paper is as follows. In \cref{sec:state-result} we introduce enough notation so that we can accurately state our main result. In \cref{sec:the-setup,sec:classes-and-chars} we introduce further notation and conventions that will be required for the case by case check. \Cref{sec:clarification-typeD} is dedicated to dealing with clarifications required concerning degenerate elements in half-spin groups. In particular we describe explicitly the Springer correspondence for degenerate elements, which may be of independent interest. In \cref{sec:quasi-isolated} we recall results of Bonnaf\'{e} concerning quasi-isolated semisimple elements and prove the existence of $F$-stable classes. Finally the remaining sections are the execution of the case by case check. Note that we have also included an index of notation for the convenience of the reader.

\begin{acknowledgments}
The work presented in this article forms the main part of the authors PhD qualification, which was supervised by Prof.\ Meinolf Geck. The author wishes to express his deepest thanks to Prof.\ Geck for proposing this problem and for his vital guidance. The author would like to thank Prof.\ C\'{e}dric Bonnaf\'{e} for many useful discussions and Professors Gunter Malle and Radha Kessar for carefully reading his PhD thesis and providing numerous useful comments. We also thank Prof.\ Malle for his comments on a preliminary version of this paper.
\end{acknowledgments}

\section{Stating The Result}\label{sec:state-result}
\begin{pa}
Let us fix a regular embedding $\iota : \bG \hookrightarrow \btG$\index{iota@$\iota$}\index{Gtilde@$\btG$} of $\bG$\index{G@$\bG$} into a group with connected centre with the same derived subgroup as $\bG$, (see \cite[\S7]{lusztig:1988:reductive-groups-with-a-disconnected-centre}), and denote by $\iota^{\star} : \btG^{\star} \twoheadrightarrow \bG^{\star}$\index{iotastar@$\iota^{\star}$}\index{Gtildestar@$\btG^{\star}$}\index{Gstar@$\bG^{\star}$} the induced surjective morphism of dual groups. If $\bG$ has a connected centre then we simply take $\btG = \bG$ and $\iota$ to be the identity map. We assume these groups to be fixed and $F^{\star}$\index{Fstar@$F^{\star}$} to be a corresponding Frobenius endomorphism of the dual groups $\bG^{\star}$ and $\btG^{\star}$. Assume $\bT_0 \leqslant \bB_0$\index{T00@$\bT_0$}\index{B_0@$\bB_0$} is a maximal torus and Borel subgroup of $\bG$, both assumed to be $F$-stable, then then we denote by $(\bW,\mathbb{S})$\index{W@$\bW$}\index{S@$\mathbb{S}$} the Coxeter system of $\bG$ defined with respect to $\bT_0\leqslant \bB_0$. Taking $\btT_0 \leqslant \btB_0$\index{T0tilde@$\btT_0$}\index{B0tilde@$\tilde{\bB}_0$} to be the unique maximal torus and Borel subgroup of $\tilde{\bG}$ satisfying $\bT_0 = \btT_0 \cap \bG$ and $\bB_0 = \tilde{\bB}_0 \cap \bG$ we have $\iota$ naturally identifies $(\bW,\mathbb{S})$ with the Coxeter system of $\tilde{\bG}$ defined with respect to $\btT_0 \leqslant \btB_0$. We now fix $F^{\star}$-stable maximal tori $\btT_0^{\star} \leqslant \btG^{\star}$\index{T0tildestar@$\btT_0^{\star}$} and $\bT_0^{\star} \leqslant \bG^{\star}$\index{T0star@$\bT_0^{\star}$} such that the triples $(\btG,\btT_0,F)$ and $(\btG^{\star},\btT_0^{\star},F^{\star})$, respectively $(\bG,\bT_0,F)$ and $(\bG^{\star},\bT_0^{\star},F^{\star})$, are in duality. We then necessarily have that $\iota^{\star}(\btT_0^{\star}) = \bT_0^{\star}$, (see for instance \cite[Lemma 1.71]{taylor:2012:thesis}). Denote by $(\bW^{\star},\mathbb{T})$\index{Wstar@$\bW^{\star}$}\index{T@$\mathbb{T}$} the Coxeter system of $\btG^{\star}$ defined with respect to $\btT_0^{\star} \leqslant \btB_0^{\star}$ then we have $\iota^{\star}$ naturally identifies $(\bW^{\star},\mathbb{T})$ with the Coxeter system of $\bG^{\star}$ defined with respect to $\bT_0^{\star}\leqslant \bB_0^{\star} := \iota^{\star}(\btB_0^{\star})$\index{B0star@$\bB_0^{\star}$}. Recall that by duality we have an anti-isomorphism $\bW \to \bW^{\star}$ denoted by $w \mapsto w^{\star}$\index{wstar@$w^{\star}$} which satisfies $\mathbb{S}^{\star} = \mathbb{T}$.
\end{pa}

\begin{pa}
For each $s \in \bT_0^{\star}$ we denote by $\bW^{\star}(s)^{\circ} \leqslant \bW^{\star}(s) \leqslant \bW^{\star}$\index{Wstarscirc@$\bW^{\star}(s)^{\circ}$}\index{Wstars@$\bW^{\star}(s)$} the Weyl group of the connected reductive group $C_{\bG^{\star}}(s)^{\circ}$ and the group $\{w \in \bW^{\star} \mid s^w = \dot{w}^{-1}s\dot{w} = s\}$ of all elements commuting with $s$, (where $\dot{w} \in N_{\bG^{\star}}(\bT_0^{\star})$ is a representative of $w$). We define similar groups for all semisimple elements $\tilde{s} \in \btT_0^{\star}$ but in this case we have $\bW^{\star}(\tilde{s})^{\circ} = \bW^{\star}(\tilde{s})$\index{Wtildesstar@$\bW^{\star}(\tilde{s})$} as the centraliser of every semisimple element is connected. Assume the conjugacy class of $\tilde{s}$ is $F^{\star}$-stable then there exists $w \in \bW^{\star}$ such that $F^{\star}(\tilde{s}) = \tilde{s}^{\dot{w}}$. Choose a Borel subgroup $\bB(\tilde{s})$ of $C_{\btG^{\star}}(\tilde{s})$ containing $\btT_0^{\star}$ then by \cite[Lemma 1.9(i)]{lusztig:1984:characters-of-reductive-groups} we may assume $w$ is the unique element of minimal length in the right coset $w\bW^{\star}(\tilde{s})$; this element is characterised by the fact that $w$ normalises $\bB(\tilde{s})$. We may now define a Frobenius endomorphism $F_{\tilde{s}}^{\star}$\index{Fstildestar@$F_{\tilde{s}}^{\star}$} of $C_{\btG^{\star}}(\tilde{s})$ by setting $F_{\tilde{s}}^{\star}(h) := {}^{\dot{w}}F^{\star}(h)$ which stabilises $\btT_0^{\star}$ and $\bB(\tilde{s})$ hence induces a Coxeter automorphism of $\bW^{\star}(\tilde{s})$. Taking $\bB(s) := \iota^{\star}(\bB(\tilde{s}))$ to be a Borel subgroup of $C_{\bG^{\star}}(s)^{\circ}$ one similarly has a Frobenius endomorphism $F_s^{\star}$\index{Fsstar@$F_s^{\star}$} of $C_{\bG^{\star}}(s)$ inducing a Coxeter automorphism of $\bW^{\star}(s)^{\circ}$ and satisfying $F_s^{\star}\circ\iota^{\star} = \iota^{\star} \circ F_{\tilde{s}}^{\star}$. We will similarly denote by $\bW(\tilde{s})$\index{Wtildes@$\bW(\tilde{s})$}, $\bW(s)$\index{Ws@$\bW(s)$} and $\bW(s)^{\circ}$\index{Wscirc@$\bW(s)^{\circ}$} the corresponding subgroups of $\bW$ under the anti-isomorphism between $\bW$ and $\bW^{\star}$.
\end{pa}

\begin{pa}
By the Jordan decomposition of characters and \cite[Proposition 13.20]{digne-michel:1991:representations-of-finite-groups-of-lie-type} we have a bijection
\begin{equation}\label{eq:char-bijection}
\Psi_{\tilde{s}} : \mathcal{E}(\tilde{G},\tilde{s}) \to \mathcal{E}(C_{\tilde{G}^{\star}}(\tilde{s}),1) \to \mathcal{E}(C_{G^{\star}}(s)^{\circ},1)\index{Psitildes@$\Psi_{\tilde{s}}$}
\end{equation}
where the first set is the geometric Lusztig series of $\tilde{G}$ determined by $\tilde{s} \in \btT_0^{\star}$ and the latter sets are the sets of unipotent characters. Note that here we denote by $C_{\tilde{G}^{\star}}(\tilde{s})$, (resp.\ $C_{G^{\star}}(s)^{\circ}$), the fixed point subgroup of $C_{\btG^{\star}}(\tilde{s})$, (resp.\ $C_{\bG^{\star}}(s)^{\circ}$), under $F_{\tilde{s}}^{\star}$, (resp.\ $F_s^{\star}$). By the classification of unipotent characters given in \cite[\S4]{lusztig:1984:characters-of-reductive-groups} the set of unipotent characters $\mathcal{E}(C_{\tilde{G}^{\star}}(\tilde{s}),1)$ can further be partitioned in to what are called families; we denote these by $\mathcal{\tilde{F}}$\index{F@$\tilde{\mathcal{F}}$}. In fact, we have an injective map $\mathcal{\tilde{F}} \mapsto \bW^{\star}(\tilde{\mathcal{F}}) \subseteq \Irr(\bW^{\star}(\tilde{s}))$\index{WstarF@$\bW^{\star}(\tilde{\mathcal{F}})$} whose image is the set of $F_{\tilde{s}}^{\star}$-stable families of irreducible characters of $\bW^{\star}(\tilde{s})$. By the map in \cref{eq:char-bijection} we have a bijection $\tilde{\mathcal{F}} \mapsto \mathcal{F}$\index{F@$\mathcal{F}$} between the families of unipotent characters of $C_{\tilde{G}^{\star}}(\tilde{s})$ and those of $C_{G^{\star}}(s)^{\circ}$. With this in mind we denote by $\overline{\mathcal{T}}_{\btG}$\index{TbarGtilde@$\overline{\mathcal{T}}_{\btG}$} the set of all pairs $(\tilde{s},\bW^{\star}(\tilde{\mathcal{F}}))$ satisfying
\begin{itemize}
	\item $\tilde{s} \in \btT_0^{\star}$ lies in an $F^{\star}$-stable conjugacy class and the image of $s := \iota^{\star}(\tilde{s})$ under an adjoint quotient of $\bG^{\star}$ is a quasi-isolated semisimple element, (for a definition see \cite[\S1.B]{bonnafe:2005:quasi-isolated}),
	\item $\bW^{\star}(\widetilde{\mathcal{F}}) \subseteq \Irr(\bW^{\star}(\tilde{s}))$ is a family of characters which is invariant under the induced action of $F_{\tilde{s}}^{\star}$.
\end{itemize}
Clearly $\bW^{\star}$ acts naturally by conjugation on $\overline{\mathcal{T}}_{\btG}$ and we denote by $\mathcal{T}_{\btG}$\index{TGtilde@$\mathcal{T}_{\btG}$} the orbits under this action.
\end{pa}

\begin{pa}
Let $\mathcal{N}_{\bG}$\index{NG@$\mathcal{N}_{\bG}$} denote the set of all pairs $(\mathcal{O},\mathscr{E})$ where $\mathcal{O} \in \Clu(\bG)$ is a unipotent class and $\mathscr{E}$ is a $\Ql$-constructible local system on $\mathcal{O}$. We will need the notion of a \emph{good pair} as introduced by Geck in \cite[4.4]{geck:1999:character-sheaves-and-GGGRs}. Recall that the Springer correspondence gives us an embedding $\Irr(\bW) \hookrightarrow \mathcal{N}_{\bG}$, (see \cite{lusztig:1984:intersection-cohomology-complexes}), which we denote by $\rho \mapsto (\mathcal{O}_{\rho},\mathscr{E}_{\rho})$. With this we may now define a function $d : \Irr(\bW) \to \mathbb{N}_0$ as follows. Let $\rho \in \Irr(\bW)$ then we then define $d(\rho)$\index{d@$d(\rho)$} to be $\dim\mathfrak{B}_u^{\bG}$ where $\mathfrak{B}_u^{\bG}$\index{BuG@$\mathfrak{B}_u^{\bG}$} is the variety of all Borel subgroups of $\bG$ containing $u \in \mathcal{O}_{\rho}$. We also denote by $b : \Irr(\bW) \to \mathbb{N}_0$\index{b@$b(\rho)$} the function which maps an irreducible character to its $b$-invariant, (see \cite[\S5.2.2]{geck-pfeiffer:2000:characters-of-finite-coxeter-groups}). With this invariant we may define the Lusztig--MacDonald--Spaltenstein induction map $j_{\bW'}^{\bW}$\index{jW@$j_{\bW'}^{\bW}(\rho)$} where $\bW'$ is any subgroup of $\bW$, (see \cite[\S5.2.8]{geck-pfeiffer:2000:characters-of-finite-coxeter-groups}).
\end{pa}

\begin{prop}[{}{Lusztig, \cite[Theorem 1.5]{lusztig:2009:unipotent-classes-and-special-Weyl}}]\label{prop:lusztig-j-ind}
Assume $(\tilde{s},\bW^{\star}(\tilde{\mathcal{F}})) \in \overline{\mathcal{T}}_{\btG}$ is any pair and let $\rho_0 \in \bW^{\star}(\tilde{\mathcal{F}})$ be the unique special character (see \cite[Theorem 6.5.13(b)]{geck-pfeiffer:2000:characters-of-finite-coxeter-groups}) then we have
\begin{equation*}
\Ind_{\bW(\tilde{s})}^{\bW}(\rho_0) = \rho_0' + \text{ a combination of }\tilde{\rho} \in \Irr(\bW) \text{ with }b(\tilde{\rho})> b(\rho_0)
\end{equation*}
where $\rho_0' \in \Irr(\bW)$ satisfies $b(\rho_0') = b(\rho_0)$. Furthermore the pair $(\mathcal{O}_{\rho_0'},\mathscr{E}_{\rho_0'})$ corresponding to $\rho_0'$ under the Springer correspondence satisfies $\mathscr{E}_{\rho_0'} \cong \Ql$.
\end{prop}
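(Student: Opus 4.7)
The plan is to reinterpret the statement via Lusztig--Macdonald--Spaltenstein $j$-induction and then to invoke the known behaviour of special characters under truncated induction from reflection subgroups of the form $\bW(\tilde s)$. By the very definition of $j$-induction, the assertion
\begin{equation*}
\Ind_{\bW(\tilde s)}^{\bW}(\rho_0) = \rho_0' + \text{(combination of $\tilde\rho$ with $b(\tilde\rho)>b(\rho_0)$)}
\end{equation*}
with $b(\rho_0') = b(\rho_0)$ is equivalent to the statement that $j_{\bW(\tilde s)}^{\bW}(\rho_0) = \rho_0'$ is a single irreducible character. Indeed $j$-induction picks out exactly the sum of constituents of $\mathrm{Ind}$ whose $b$-invariant is minimal, and that minimum coincides with $b(\rho_0)$ whenever it is attained. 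So the whole content of the first assertion is that $j_{\bW(\tilde s)}^{\bW}(\rho_0)$ is irreducible.

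The key input is a theorem of Lusztig (recorded in \cite[\S4]{lusztig:1984:characters-of-reductive-groups}): truncated induction from the reflection subgroup $\bW(\tilde s) \leq \bW$, which arises as the Weyl group of $C_{\btG^{\star}}(\tilde s)$, sends the special character of a family in $\Irr(\bW(\tilde s))$ to the special character of the corresponding family in $\Irr(\bW)$. In particular $j_{\bW(\tilde s)}^{\bW}(\rho_0)$ is both irreducible and special in $\bW$. The proof of this is not uniform: one reduces to the irreducible factors of the root system, and then proceeds type-by-type. In classical types $B$, $C$, $D$ one parametrizes characters and families by symbols and computes the effect of $j$-induction combinatorially via the operation on symbols; in exceptional types one reads off the statement from the explicit tables of families.

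Having identified $\rho_0'$ as a special character of $\bW$, the final assertion $\mathscr{E}_{\rho_0'}\cong\Ql$ is then automatic: under the Springer embedding $\Irr(\bW)\hookrightarrow\mathcal{N}_{\bG}$ the special characters go to pairs of the form $(\mathcal{O},\Ql)$ with trivial local system, by the very construction of the correspondence (equivalently, the equality $b(\rho_0')=a(\rho_0')$, which characterizes specialness, forces $\rho_0'$ to occur in the top cohomology of $\mathfrak{B}_u^{\bG}$ with trivial $A_{\bG}(u)$-action).

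The main obstacle is the preservation of specialness under $j_{\bW(\tilde s)}^{\bW}$, since this is what simultaneously delivers the irreducibility of the truncated induction and the triviality of the Springer local system. Because $\bW(\tilde s)$ is generally not a parabolic subgroup of $\bW$, one cannot invoke parabolic $j$-induction directly, and the verification ultimately rests on a case-by-case analysis, hinging on the detailed matching of families of characters across $\bW(\tilde s)$ and $\bW$.
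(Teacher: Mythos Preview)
The paper does not prove this proposition; it is quoted as Lusztig's result \cite[Theorem~1.5]{lusztig:2009:unipotent-classes-and-special-Weyl} and used as a black box. Your sketch is a fair outline of how that result is established and you correctly isolate the essential content: the statement is equivalent to $j_{\bW(\tilde s)}^{\bW}(\rho_0)$ being irreducible with trivial Springer local system, and the real work is the case-by-case verification that $j$-induction from the (generally non-parabolic) reflection subgroup $\bW(\tilde s)$ preserves specialness.

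Two points of imprecision are worth flagging. First, the claim that this preservation of specialness is ``recorded in \cite[\S4]{lusztig:1984:characters-of-reductive-groups}'' is not quite right: the orange book treats $j$-induction from \emph{parabolic} subgroups, and the extension to subgroups of the form $\bW(\tilde s)$ is precisely what is carried out (type by type) in the 2009 paper you are trying to prove. So your ``key input'' is essentially the theorem itself, and your final paragraph tacitly concedes this. Second, the implication ``special $\Rightarrow$ trivial local system'' is correct, but your justification via ``$b=a$ forces trivial $A_{\bG}(u)$-action in top cohomology'' is not the actual mechanism. The equality $a(\rho)=b(\rho)$ is a statement about generic versus fake degrees and does not by itself say anything about $A_{\bG}(u)$; the fact that special characters correspond to pairs $(\mathcal{O},\Ql)$ under the Springer correspondence is a separate theorem (Lusztig, Spaltenstein), established again by classification rather than by a uniform argument from the definition of specialness.

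In short: your high-level strategy matches Lusztig's, but the proof genuinely lives in the case analysis you allude to, and neither of the two ``automatic'' steps you invoke is as automatic as presented.
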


\begin{definition}
Recall from \cite[1.1(I)]{spaltenstein:1985:on-the-generalized-springer-correspondence} that we have $b(\rho) \geqslant d(\rho)$ for all $\rho \in \Irr(\bW)$, hence in the notation of \cref{prop:lusztig-j-ind} we have an inequality $b(\tilde{\rho}) \geqslant d(\tilde{\rho}) \geqslant b(\rho_0)$. With this in mind we say a pair $(\tilde{s},\bW^{\star}(\tilde{\mathcal{F}})) \in \overline{\mathcal{T}}_{\btG}$ is \emph{$d$-good} if the following sharper form of \cref{prop:lusztig-j-ind} holds
\begin{equation*}
\Ind_{\bW(\tilde{s})}^{\bW}(\rho_0) = \rho_0' + \text{ a combination of }\tilde{\rho} \in \Irr(\bW) \text{ with }d(\tilde{\rho}) > b(\rho_0).
\end{equation*}
\end{definition}

\begin{pa}
Recall that we are interested in the characters of $G$. These are classified in \cite{lusztig:1988:reductive-groups-with-a-disconnected-centre} by understanding the restriction of characters from $\tilde{G}$ to $G$, in particular we have the following. Assume $\tilde{\psi} \in \mathcal{E}(\tilde{G},\tilde{s})$ is an irreducible character of $\tilde{G}$ and $\psi := \Psi_{\tilde{s}}(\tilde{\psi})$ is the corresponding unipotent character. The quotient group $A_{G^{\star}}(s)$ acts on $\psi$ by conjugation and we denote the stabiliser of $\psi$ under this action by $\Stab_{A_{G^{\star}}(s)}(\psi)$\index{Stab@$\Stab_{A_{G^{\star}}(s)}(\psi)$}, (note here $A_{G^{\star}}(s)$ is defined with respect to $F_s^{\star}$). The main result of \cite{lusztig:1988:reductive-groups-with-a-disconnected-centre} states that the restriction $\Res_G^{\tilde{G}}(\tilde{\psi})$ is multiplicity free and contains $|\Stab_{A_{G^{\star}}(s)}(\psi)|$ number of irreducible constituents. The main result below will be concerned with finding characters $\psi$ such that $|\Stab_{A_{G^{\star}}(s)}(\psi)|$ is maximal.
\end{pa}

\begin{pa}
To understand the meaning of the word maximal in the previous sentence we must recall some results from \cite{taylor:2011:on-unipotent-supports} concerning unipotent classes. The embedding $\iota$ induces a bijection $\Clu(\bG) \to \Clu(\btG)$ between the sets of unipotent conjugacy classes and we will implicitly identify each $\mathcal{O} \in \Clu(\bG)$ with its image $\iota(\mathcal{O})$, (similarly for the unipotent elements of $\bG$ and $\btG$).
\end{pa}

\begin{definition}
Let $\mathcal{O} \in \Clu(\bG)^F$ then we say a class representative $u \in \mathcal{O}^F$ is \emph{well chosen} if $|A_{\bG}(u)^F| = |Z_{\bG}(u)^F||A_{\btG}(u)|$, where $Z_{\bG}(u)$\index{ZGu@$Z_{\bG}(u)$} is the image of $Z(\bG)$ in $A_{\bG}(u)$.
\end{definition}

By \cite[Proposition 2.4]{taylor:2011:on-unipotent-supports} the previous definition makes sense, in particular every class $\mathcal{O} \in \Clu(\bG)^F$ of a simple algebraic group contains a well-chosen class representative and we will assume all class representatives of $F$-stable unipotent classes are well chosen. With this in hand we may now give the main theorem of this paper.

\begin{thm}\label{prop:A}
Assume $\bG$ is a simple algebraic group, $p$ is a good prime for $\bG$ and $\mathcal{O} \in \Clu(\bG)^F$. There exists a pair $(\tilde{s},\bW^{\star}(\tilde{\mathcal{F}})) \in \mathcal{T}_{\btG}$ admitting a unipotent character $\psi \in \mathcal{F}$ satisfying the following properties:
\begin{properties}
	\item $n_{\psi} = |A_{\btG}(u)|$.\label{P1}
	\item $|\Stab_{A_{G^{\star}}(s)}(\psi)| = |Z_{\bG}(u)^F|$.\label{P2}
	\item $j_{\bW(\tilde{s})}^{\bW}(\rho)$ corresponds to $(\mathcal{O},\Ql)$ under the Springer correspondence where $\rho \in \bW^{\star}(\tilde{\mathcal{F}})$ is the unique special character. In particular this ensures that $\Phi_{\tilde{G}}(\Psi_{\tilde{s}}^{-1}(\psi)) = \mathcal{O}$.\label{P3}
\end{properties}
Furthermore the following conditions hold unless $\bG$ is a spin/half-spin group and $A_{\bG}(u)$ is non-abelian:
\begin{properties}[resume]
	\item the pair $(\tilde{s},\bW^{\star}(\tilde{\mathcal{F}}))$ is $d$-good.\label{P4}
	\item $\mathfrak{X}_{\mathcal{F}} := \{\psi \in \mathcal{F} \mid |\Stab_{A_{G^{\star}}(s)}(\psi)| \neq |Z_{\bG}(u)^F|\} = \emptyset$.\label{P5}
\end{properties}
\end{thm}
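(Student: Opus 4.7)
The plan is to proceed by a case-by-case analysis according to the Dynkin type of $\bG$, leveraging the work of Lusztig and H\'{e}zard for the connected-centre case and Bonnaf\'{e}'s classification of quasi-isolated semisimple elements (recalled in \cref{sec:quasi-isolated}) to handle the disconnected-centre case. First I would split off the case where $Z(\bG)$ is already connected: there $\btG = \bG$, we may take $\tilde{s} = 1$, and \cref{P1,P3,P4,P5} reduce to the statements established in \cite{hezard:2004:thesis} and \cite{lusztig:2009:unipotent-classes-and-special-Weyl}; \cref{P2} is automatic because $A_{G^{\star}}(1)$ is trivial, and the only additional content is the $F^{\star}$-stability of the chosen family, which is free because the special character $\rho_0'$ is uniquely determined by $\mathcal{O}$ and hence automatically $F^{\star}$-fixed.

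For the disconnected-centre case the approach is class-by-class: given $\mathcal{O} \in \Clu(\bG)^F$ with well-chosen representative $u \in \mathcal{O}^F$, construct the triple $(\tilde{s}, \bW^{\star}(\tilde{\mathcal{F}}), \psi)$ in three steps. (i) Using the tabulation of quasi-isolated classes of $\bG^{\star}$ in \cref{sec:quasi-isolated}, select an $F^{\star}$-stable conjugacy class of a semisimple element whose image $s = \iota^{\star}(\tilde{s})$ has a component group $A_{G^{\star}}(s)$ large enough to account for $Z_{\bG}(u)^F$. (ii) Inside $\bW^{\star}(\tilde{s})$, identify an $F^{\star}_{\tilde{s}}$-stable family $\bW^{\star}(\tilde{\mathcal{F}})$ whose special character $\rho_0$ satisfies $j_{\bW(\tilde{s})}^{\bW}(\rho_0) = \rho_0'$ with $(\mathcal{O}_{\rho_0'}, \mathscr{E}_{\rho_0'}) = (\mathcal{O}, \Ql)$; the existence of such a $\rho_0'$ inside $\Ind_{\bW(\tilde{s})}^{\bW}(\rho_0)$ is guaranteed by \cref{prop:lusztig-j-ind}, and for \cref{P4} one has to upgrade the inequality $b(\tilde{\rho}) > b(\rho_0)$ to $d(\tilde{\rho}) > b(\rho_0)$. (iii) Transport $\rho_0$ through \cref{eq:char-bijection} to obtain $\psi \in \mathcal{F}$; then \cref{P1,P2} are verified by comparing the Jordan-decomposition degree formula to the well-chosen identity $|A_{\bG}(u)^F| = |Z_{\bG}(u)^F||A_{\btG}(u)|$ and by reading off the stabiliser from the explicit action of $A_{G^{\star}}(s)$ on the members of $\mathcal{F}$.

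The bulk of the work is the execution of this recipe type-by-type. Type $A$ is essentially trivial because every family is a singleton, so \cref{P2,P5} are vacuous. For types $B$ and $C$, and for non-degenerate classes in type $D$, the combinatorics of Lusztig symbols makes the $j$-induction in (ii) and the stabiliser computation in (iii) both accessible, and the $d$-good inequality can be read off directly from the $b$- and $d$-invariants of the characters in $\bW^{\star}(\tilde{\mathcal{F}})$. The exceptional types $E_6, E_7, E_8, F_4, G_2$ reduce to a finite set of unipotent classes, each handled by a direct consultation of Lusztig's tables of families, Bonnaf\'{e}'s list of quasi-isolated classes, and the Springer correspondence.

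The main obstacle, and the reason \cref{P4,P5} are excluded there, is the spin and half-spin groups of type $D_n$: for degenerate unipotent classes $A_{\bG}(u)$ can be non-abelian, an $F$-stable class may split into two $G$-classes that must each be matched to the correct family, and the naive stabiliser count is disrupted by a nontrivial cocycle obstruction that enlarges Lusztig's families. \Cref{sec:clarification-typeD} is devoted to computing the Springer correspondence on these degenerate elements explicitly so that \cref{P1,P2,P3} can still be established on the nose, even though the sharpenings in \cref{P4,P5} genuinely fail. Apart from this exceptional setting, the uniform combinatorial description of families and of $j$-induction carries the argument through.
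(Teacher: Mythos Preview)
Your overall architecture—case-by-case by Dynkin type, feeding Lusztig's and H\'{e}zard's data into Bonnaf\'{e}'s classification of quasi-isolated classes, then verifying (P1)--(P5) from the symbol combinatorics—is exactly the paper's strategy (\cref{sec:gen-strategy}). However, two points in your plan are genuinely wrong.

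First, your treatment of the connected-centre case is incorrect: you cannot take $\tilde{s}=1$. This fails already for adjoint $\G_2$: the class $\tilde{\A}_1$ has $\rho(\mathcal{O})$ not special in $\bW$, so no family of $\bW^{\star}(1)=\bW^{\star}$ has its special character $j$-inducing to it; the paper instead uses $\tilde{s}$ with $\bW^{\star}(\tilde{s})$ of type $\A_1\A_1$. More generally, $j$-induction of a special character from the full $\bW$ only reaches \emph{special} unipotent classes, and the whole point of the Lusztig/H\'{e}zard input is that for non-special $\mathcal{O}$ one must pass to a proper parahoric subgroup of $\bW$—hence a nontrivial quasi-isolated $\tilde{s}$, even when $Z(\bG)$ is connected. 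The connected-centre case is not a free base case; it needs the same machinery, and one must still check $F^{\star}$-stability of the chosen quasi-isolated class (\cref{prop:F-stable-classical-quasi-adjoint,prop:F-stable-surjective-class-semisimple}).

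Second, you have misidentified where (P4) and (P5) fail. You attribute the exclusion to degenerate unipotent classes in type $\D$ having non-abelian $A_{\bG}(u)$. But degenerate classes in half-spin groups have $|A_{\bG}(u)|\leqslant 2$ (hence abelian), and for them (P4)--(P5) are established in full (see the discussion after \cref{prop:lusztig-typeD-3}). The genuine failures occur for certain \emph{non-degenerate} classes in simply connected type $\B_n$ and in spin/half-spin type $\D_n$ where $A_{\bG}(u)$ is non-abelian: there $\mathcal{F}$ lives on a product of type $\C_{\kappa}\A_{\mu}\C_{\kappa}$ or $\D_{\mu}\A_{\nu}\D_{\mu}$, and characters in $\mathcal{F}$ with unequal outer factors have the wrong stabiliser, so $\mathfrak{X}_{\mathcal{F}}\neq\emptyset$ (see the paragraphs following \cref{prop:lusztig-typeB} and \cref{prop:lusztig-typeD-2}). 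There is no ``cocycle obstruction enlarging families''; the families are the ordinary Lusztig families and the issue is simply that the swap automorphism does not fix every member. Relatedly, (P2) is not vacuous in type $\A$: since the singleton family forces $\Stab_{A_{G^{\star}}(s)}(\psi)=A_{G^{\star}}(s)$, one must still arrange $|A_{G^{\star}}(s)|=|Z_{\bG}(u)^F|$, which is the content of \cref{sec:comp-orders-type-A} together with \cref{lem:comp-group-fixed-point}.
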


\begin{rem}
If $\bG$ is adjoint then \cref{P2,P5} are trivially satisfied, so we will only need to concern ourselves with this when $\bG$ has a disconnected centre. Furthermore, although strange in appearance \cref{P5} is important for applications to generalised Gelfand--Graev representations (as is explained in \cite{taylor:2011:on-unipotent-supports}).
\end{rem}

\section{The Setup}\label{sec:the-setup}
\begin{pa}
We now introduce the appropriate notation and machinery that we will need to check \cref{prop:A} effectively. Firstly we fix an algebraic closure $\Ql$ of the field of $\ell$-adic numbers with $\ell$ a prime distinct from $p$ and we assume that any representation of a finite group is taken over $\Ql$. Let us also note here that $\mathbb{N} = \{0,1,2,3,\dots\}$ will denote the set of all natural numbers including 0. We assume fixed an isomorphism of groups $\imath : (\mathbb{Q}/\mathbb{Z})_{p'} \to \mathbb{K}^{\times}$\index{i@$\imath$} and an injective homomorphism of groups $\jmath : \mathbb{Q}/\mathbb{Z} \to \overline{\mathbb{Q}}_{\ell}^{\times}$\index{j@$\jmath$}. Note that $(\mathbb{Q}/\mathbb{Z})_{p'}$ is the subgroup of all elements whose order is coprime to $p$. The composition $\jmath \circ \imath^{-1}$ gives an injective homomorphism $\kappa : \mathbb{K}^{\times} \to \overline{\mathbb{Q}}_{\ell}^{\times}$\index{k@$\kappa$}. As $\bG$ is simple we can and will express the Frobenius endomorphism $F$ as a composition $F_r\circ\tau$\index{Fr@$F_r$}\index{tau@$\tau$} where $\tau$ is a graph automorphism of $\bG$ and $F_r$ is a field automorphism of $\bG$ for some $r$ a power of $p$, (note that $r = q$ if $\tau$ is trivial).
\end{pa}

\begin{pa}\label{pa:root-data}
Let us denote the root datum of $\bG$, (resp.\ $\bG^{\star}$), relative to $\bT_0$, (resp.\ $\bT_0^{\star}$), by $(X,\Phi,\widecheck{X},\widecheck{\Phi})$\index{X@$X$}\index{Phi@$\Phi$}\index{Xcheck@$\widecheck{X}$}\index{Phicheck@$\widecheck{\Phi}$}, (resp.\ $(X^{\star},\Phi^{\star},\widecheck{X}^{\star},\widecheck{\Phi}^{\star})$)\index{Xstar@$X^{\star}$}\index{Phistar@$\Phi^{\star}$}\index{Xcheckstar@$\widecheck{X}^{\star}$}\index{Phicheckstar@$\widecheck{\Phi}^{\star}$}. Here $X := X(\bT_0) = \Hom(\bT_0,\mathbb{K}^{\times})$ and $\widecheck{X} := \widecheck{X}(\bT_0) = \Hom(\mathbb{K}^{\times},\bT_0)$ are the sets of all algebraic group homomorphisms containing respectively the roots $\Phi$ and coroots $\widecheck{\Phi}$ of $\bG$, (similarly in the dual case). We assume $\Delta \subset \Phi$\index{Delta0@$\Delta$} and $\widecheck{\Delta} \subset \widecheck{\Phi}$\index{Delta0check@$\widecheck{\Delta}$} are the sets of simple roots and simple coroots determined by our choice of Borel subgroup $\bB_0$. For each $\alpha \in \Delta$ we denote by $m_{\alpha} \in \mathbb{N}$\index{malpha@$m_{\alpha}$} the natural numbers such that $\alpha_0 = - \sum_{\alpha \in \Delta} m_{\alpha}\alpha \in \Phi$ is the unique lowest root of $\Phi$, (this exists as $\bG$ is simple). We will denote by $\tilde{\Delta} = \Delta \cup \{\alpha_0\}$\index{Delta0tilde@$\tilde{\Delta}$} the set of extended simple roots and by $\mathbb{S}_0 = \{s_{\alpha} \mid \alpha\in\tilde{\Delta}\}$\index{S0@$\mathbb{S}_0$} the corresponding set of reflections. Assume $\tilde{\Delta}^{\star}$\index{Delta0tildestar@$\tilde{\Delta}^{\star}$} is defined with respect to $\Delta^{\star} \subset \Phi^{\star}$\index{Delta0star@$\Delta^{\star}$} in the same way as $\tilde{\Delta}$ is defined with respect to $\Delta$. Then similarly we denote by $\mathbb{T}_0 = \{t_{\alpha} \mid \alpha \in \widetilde{\Delta}^{\star}\}$\index{T0@$\mathbb{T}_0$} the corresponding set of reflections in $\bW^{\star}$.

Denote by $\mathbb{R}X$\index{RX@$\mathbb{R}X$} the $\mathbb{R}$-vector space $\mathbb{R} \otimes_{\mathbb{Z}} X$ and by $\mathbb{R}\widecheck{X}$\index{RXcheck@$\mathbb{R}\widecheck{X}$} the $\mathbb{R}$-vector space $\mathbb{R} \otimes_{\mathbb{Z}} \widecheck{X}$. These spaces have a canonical perfect pairing which we denote by $\langle-,-\rangle : \mathbb{R}X \times \mathbb{R}\widecheck{X} \to \mathbb{R}$\index{langlerangle@$\langle -,- \rangle$}. We denote by $\Omega = \{\varpi_{\alpha} \mid \alpha \in \Delta\} \subset \mathbb{R}X$\index{Omega@$\Omega$}, (resp.\ $\widecheck{\Omega} = \{\widecheck{\varpi}_{\alpha} \mid \alpha \in \Delta\} \subset \mathbb{R}\widecheck{X}$\index{Omegacheck@$\widecheck{\Omega}$}), the basis dual to $\widecheck{\Delta}$, (resp.\ $\Delta$) and define the fundamental group of the root system to be the quotient group $\Pi = \mathbb{Z}\Omega/\mathbb{Z}\Phi$\index{Pi@$\Pi$}, (similarly $\widecheck{\Pi} = \mathbb{Z}\widecheck{\Omega}/\mathbb{Z}\widecheck{\Phi}$\index{Picheck@$\widecheck{\Pi}$} is the dual fundamental group). We will assume that the extended set of roots $\tilde{\Delta}$ is indexed as $\{\alpha_0,\alpha_1,\dots,\alpha_n\}$\index{alphai@$\alpha_i$}, where $n$ is the semisimple rank of $\bG$, with the explicit labelling taken as in \cite[Plate I - IX]{bourbaki:2002:lie-groups-chap-4-6}. For each $1 \leqslant i \leqslant n$ we denote respectively $s_{\alpha_i}$, $t_{\alpha_i}$ $\varpi_{\alpha_i}$, $\widecheck{\varpi}_{\alpha_i}$ and $m_{\alpha_i}$ simply by $s_i$\index{si@$s_i$}, $t_i$\index{ti@$t_i$}, $\varpi_i$\index{pii@$\varpi_i$}, $\widecheck{\varpi}_i$\index{pichecki@$\widecheck{\varpi}_i$} and $m_i$\index{mi@$m_i$}. Following the conventions of Bonnaf\'{e} we let $\widecheck{\varpi}_{\alpha_0} = 0$ and $m_{\alpha_0} = 1$, (see \cite[\S 3.B]{bonnafe:2005:quasi-isolated}), we also denote $s_{\alpha_0} \in \mathbb{S}_0$, (resp.\ $t_{\alpha_0} \in \mathbb{T}_0$), simply by $s_0$, (resp.\ $t_0$).
\end{pa}

\begin{pa}
To $\bG$ and $\bG^{\star}$ we fix simply connected covers $\delta_{\simc} : \bG_{\simc} \to \bG$\index{Gsc@$\bG_{\simc}$}\index{delta1sc@$\delta_{\simc}$} and $\delta_{\simc}^{\star} : \bG_{\ad}^{\star} \to \bG^{\star}$\index{delta1scstar@$\delta_{\simc}^{\star}$}\index{Gadstar@$\bG_{\ad}^{\star}$} and adjoint quotients $\delta_{\ad} : \bG \to \bG_{\ad}$\index{delta1ad@$\delta_{\ad}$}\index{Gad@$\bG_{\ad}$} and $\delta_{\ad}^{\star} : \bG^{\star} \to \bG_{\simc}^{\star}$\index{delta1adstar@$\delta_{\ad}^{\star}$}\index{Gscstar@$\bG_{\simc}^{\star}$}, (note that $\bG_{\ad}^{\star}$ and $\bG_{\simc}^{\star}$ should be interpreted as $(\bG_{\ad})^{\star}$ and $(\bG_{\simc})^{\star}$ respectively). The kernels of these covers are simply the centres of the appropriate groups. If $\bG$ is not of type $\B$ or $\C$ then we will have $\bG_{\ad}^{\star}$ and $\bG_{\simc}$, (resp.\ $\bG_{\simc}^{\star}$ and $\bG_{\ad}$), are isomorphic as they are simply connected, (resp.\ adjoint), groups of the same type. Therefore we may and will take $\bG_{\ad}^{\star} = \bG_{\simc}$ and $\bG_{\simc}^{\star} = \bG_{\ad}$ in this case. We will assume that the isogenies $\delta_{\simc}$, $\delta_{\simc}^{\star}$, $\delta_{\ad}$ and $\delta_{\ad}^{\star}$ are chosen such that the compositions satisfy $\delta_{\ad} \circ \delta_{\simc}=\delta_{\ad}^{\star} \circ \delta_{\simc}^{\star}$. Note that such isogenies always exist as a consequence of the isogeny theorem for algebraic groups, (see for instance the remarks in \cite[\S1.2]{taylor:2012:thesis}). By \cite[Proposition 22.7]{malle-testerman:2011:linear-algebraic-groups} there exist Frobenius endomorphisms of $\bG_{\simc}$ and $\bG_{\ad}$, which we again denote by $F$, such that $\delta_{\simc}$ and $\delta_{\ad}$ are defined over $\mathbb{F}_q$. Similarly we will denote by $F^{\star}$ a Frobenius endomorphism of both $\bG_{\simc}^{\star}$ and $\bG_{\ad}^{\star}$ such that $\delta_{\simc}^{\star}$ and $\delta_{\ad}^{\star}$ are also defined over $\mathbb{F}_q$.
\end{pa}

\begin{pa}
Let us fix a maximal torus and Borel subgroup $\bT_{\simc} \leqslant \bB_{\simc} \leqslant \bG_{\simc}$\index{Tsc@$\bT_{\simc}$}\index{Bsc@$\bB_{\simc}$} then we may assume that $\bT_0 \leqslant \bB_0$ are the images of $\bT_{\simc}\leqslant\bB_{\simc}$ under the isogeny $\delta_{\simc}$. Furthermore we may define a maximal torus and Borel subgroup $\bT_{\ad} \leqslant \bB_{\ad} \leqslant \bG_{\ad}$\index{Tad@$\bT_{\ad}$}\index{Bad@$\bB_{\ad}$} by letting these be the images of $\bT_{\simc} \leqslant \bB_{\simc}$ under the isogeny $\delta_{\ad} \circ \delta_{\simc}$. Similarly we fix a maximal torus and Borel subgroup $\bT_{\ad}^{\star} \leqslant \bB_{\ad}^{\star} \leqslant \bG_{\ad}^{\star}$\index{Tadstar@$\bT_{\ad}^{\star}$}\index{Badstar@$\bB_{\ad}^{\star}$} and we assume $\bT_0^{\star} \leqslant \bB_0^{\star}$ are the images of $\bT_{\ad}^{\star} \leqslant \bB_{\ad}^{\star}$ under the isogeny $\delta_{\simc}^{\star}$. As before we define a maximal torus and Borel subgroup $\bT_{\simc}^{\star} \leqslant \bB_{\simc}^{\star} \leqslant \bG_{\simc}^{\star}$\index{Tscstar@$\bT_{\simc}^{\star}$}\index{Bscstar@$\bB_{\simc}^{\star}$} to be the images of $\bT_{\ad}^{\star} \leqslant \bB_{\ad}^{\star}$ under $\delta_{\ad}^{\star}\circ\delta_{\simc}^{\star}$, (note once again that $\bT_{\ad}^{\star}$ and $\bT_{\simc}^{\star}$ should be interpreted as $(\bT_{\ad})^{\star}$ and $(\bT_{\simc})^{\star}$ respectively -- similarly for the the Borel subgroups). If $\bG$ is not of type $\B$ or $\C$ then we will assume $\bT_{\simc} = \bT_{\ad}^{\star}$ and $\bB_{\simc} = \bB_{\ad}^{\star}$. We now assume that the duality isomorphisms $\varphi_{\ad}$, $\varphi$ and $\varphi_{\simc}$ are chosen such that the following diagram is commutative.

\begin{center}
\begin{tikzcd}
X(\bT_{\ad}) \arrow[hook]{r}\arrow{d}{\varphi_{\ad}}
&X(\bT_0) \arrow[hook]{r}\arrow{d}{\varphi}
&X(\bT_{\simc}) \arrow{d}{\varphi_{\simc}}\\
\widecheck{X}(\bT_{\ad}^{\star}) \arrow[hook]{r}
&\widecheck{X}(\bT_0^{\star}) \arrow[hook]{r}
&\widecheck{X}(\bT_{\simc}^{\star})
\end{tikzcd}
\end{center}

\noindent Here the horizontal maps are those induced by our chosen isogenies.

The above maps give us bijections between the sets of roots and coroots so we will denote by $\Phi$, $\widecheck{\Phi}$ the common set of roots and coroots of $\bG_{\ad}$, $\bG$ and $\bG_{\simc}$; also we write $\Phi^{\star}$, $\widecheck{\Phi}^{\star} $ for the common set of roots and coroots of $\bG_{\simc}^{\star}$, $\bG^{\star}$ and $\bG_{\ad}^{\star}$. If $\bG$ is not of type $\B$ or $\C$ then we will have $X(\bT_{\ad}) = X(\bT_{\simc}^{\star})$ and $\widecheck{X}(\bT_{\ad}) = \widecheck{X}(\bT_{\simc}^{\star})$, which means we also have $\Phi = \Phi^{\star}$ and $\widecheck{\Phi} = \widecheck{\Phi}^{\star}$. The Coxeter systems of $\bG_{\ad}$ and $\bG_{\simc}$ will be identified with $(\bW,\mathbb{S})$ through the above isogenies. Similarly we will identify the Coxeter systems of $\bG_{\simc}^{\star}$ and $\bG_{\ad}^{\star}$ with $(\bW^{\star},\mathbb{T})$ through the above isogenies.
\end{pa}

\section{Classes and Characters}\label{sec:classes-and-chars}
\begin{pa}\label{pa:classes-and-chars}
We set out here the labelling conventions that will be maintained throughout. Assume $\bH$ is a connected reductive algebraic group whose derived subgroup $\bH'$ is simple. Assume that $\bH'$ is of classical type then the elements of $\Clu(\bH)$ will be described in terms of partitions as in \cite[\S13.1]{carter:1993:finite-groups-of-lie-type}. Specifically the partition of $\mathcal{O} \in \Clu(\bH)$ is given by the elementary divisors of $u \in \mathcal{O}$ in a natural matrix representation of $\bH$. If $\bH'$ is of exceptional type then the elements of $\Clu(\bH)$ are described using the Bala--Carter labelling, which is also described in \cite[\S13.1]{carter:1993:finite-groups-of-lie-type}. For the parameterisation of the characters of Weyl groups, (except for the case of $\G_2$), and unipotent characters we will follow the parameterisation defined in \cite[Chapter 4]{lusztig:1984:characters-of-reductive-groups}. In particular if $\bW$ is a Weyl group of type $\B_n$, (resp.\ $\D_n$), then $\Irr(\bW)$ will be parameterised in terms of symbols of rank $n$ and defect 1, (resp.\ defect 0). However, for notational convenience, we will denote the two-row symbol $\bigl[\begin{smallmatrix} A \\ B \end{smallmatrix}\bigr]$ by $[A;B]$\index{AB@$[A;B]$}. In the case of Weyl groups of type $\G_2$ we will follow the labelling given in \cite[\S13.2]{carter:1993:finite-groups-of-lie-type}.

It is clear from \cref{prop:A} that we will also need to know part of the Springer correspondence. The image of the springer correspondence contains the subset $\{(\mathcal{O},\Ql) \mid \mathcal{O} \in \Clu(\bH)\}$ of $\mathcal{N}_{\bH}$ and it is this part of the map that will interest us. If $\bH'$ is of classical type then this part of the Springer correspondence is described combinatorially in \cite[\S2]{geck-malle:2000:existence-of-a-unipotent-support}. If $\bH'$ is of exceptional type then the Springer correspondence is given by the tables in \cite[\S13.3]{carter:1993:finite-groups-of-lie-type}. We will denote by $\rho(\mathcal{O}) \in \Irr(\bW)$\index{rhoO@$\rho(\mathcal{O})$} the character corresponding to the pair $(\mathcal{O},\Ql)$ under the Springer correspondence.

In type $\D_n$ all of the labelling sets considered above have some ambiguity with regard to degenerate elements. Here we say a unipotent class $\mathcal{O} \in \Clu(\bG)$\index{O@$\mathcal{O}_{\pm}$} is degenerate if it is parameterised by a partition $\lambda \vdash 2n$ all of whose entries are even. We say a character of $\bW$ is degenerate if its corresponding symbol is degenerate, (in the sense of \cite[\S4.6]{lusztig:1984:characters-of-reductive-groups}). We will use a $\pm$\index{Lambda@$[\Lambda]_{\pm}$} notation to distinguish between all degenerate elements, note that in \cite[Chapter 4]{lusztig:1984:characters-of-reductive-groups} Lusztig uses the notation $s'$/$s''$. We will make this concrete below by describing explicitly the Springer correspondence in this case.
\end{pa}

\begin{pa}
To prove \cref{prop:A} we will need to be able to discern the action of automorphisms on unipotent characters. In this direction we have the following result which was already known to Lusztig in \cite{lusztig:1988:reductive-groups-with-a-disconnected-centre} but was formalised by Digne--Michel in \cite[Proposition 6.6]{digne-michel:1990:lusztigs-parametrization} and Malle in \cite[\S1]{malle:1991:darstellungstheorie-galoisgruppen}, (see also \cite[Proposition 3.7]{malle:2007:height-0-characters}).
\end{pa}

\begin{lem}\label{lem:invariance-unipotent}
Assume $\bH$ is simple, $F : \bH \to \bH$ is a Frobenius endomorphism and $\gamma$ is an automorphism of $\bH$ commuting with $F$. If $\gamma$ induces the identity on $\bW$ then every unipotent character of $H$ is fixed under composition with $\gamma$. Assume $\bH$ is of type $\A_n$, $\D_n$ or $\E_6$ and $\gamma$ acts on the corresponding Coxeter system $(\bW',\mathbb{S}')$ as a non-trivial graph automorphism. Then every unipotent character is fixed under composition with $\gamma$ except in the following cases:
\begin{itemize}
	\item $\bH$ is of type $\D_{2n}$, $\gamma$ has order 2 and the character is parameterised by a degenerate symbol.
	\item $\bH$ is of type $\D_4$, $\gamma$ has order 3 and the character is parameterised by one of the symbols
\end{itemize}
\begin{equation}\label{eq:stable-chars-D4}
[2;2]_{\pm}
\quad
[12;12]_{\pm}
\quad
[01;14]
\quad
[012;124].
\end{equation}
\end{lem}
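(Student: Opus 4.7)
The plan is to exploit Lusztig's parameterisation of unipotent characters, which depends only on the Coxeter datum $(\bW,\mathbb{S})$ together with the automorphism of $\bW$ induced by $F$; consequently the action of $\gamma$ on unipotent characters is determined by the combinatorial action of $\gamma$ on this datum.

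For the first assertion, assume $\gamma$ induces the identity on $\bW$. After composing with an inner automorphism I may assume $\gamma$ stabilises the $F$-stable pair $\bT_0 \leqslant \bB_0$. Since $\gamma$ acts trivially on $\bW$, it permutes the $F$-conjugacy classes of $F$-stable maximal tori trivially, hence every Deligne--Lusztig virtual character $R_{\bT_w}^{\bH}(1)$ is fixed under composition with $\gamma$. Because every unipotent character is determined (through Lusztig's non-abelian Fourier transform and uniform projection) by combinatorial data depending only on $(\bW,F)$, the same conclusion holds for every unipotent character; this is essentially the content of \cite[Proposition 6.6]{digne-michel:1990:lusztigs-parametrization}.

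For the second assertion, by the first part I may assume $\gamma$ normalises $\bT_0 \leqslant \bB_0$ and induces a non-trivial graph automorphism of the Dynkin diagram, and the proof proceeds case by case. In type $\A_n$ the unipotent characters are indexed by partitions of $n+1$, and a direct calculation (either via generic degrees or via Deligne--Lusztig character sums) shows the transpose action induced by $\gamma$ fixes every unipotent character. In type $\E_6$, an inspection of the thirty unipotent characters listed in \cite[\S4.13]{lusztig:1984:characters-of-reductive-groups}, combined with the fact that $\gamma$ preserves generic degrees, shows all are $\gamma$-fixed. For type $\D_n$ with $\gamma$ of order $2$, the graph automorphism acts on symbols $[A;B]$ by swapping rows; under our convention $[A;B] = [B;A]$, so non-degenerate symbols (with $A \neq B$ as multisets) index $\gamma$-fixed characters, while degenerate symbols $[\Lambda]_{\pm}$, which exist only when $n$ is even, are exchanged in pairs by $\gamma$.

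The remaining and most delicate case is type $\D_4$ with $\gamma$ the triality automorphism of order $3$. Here one must trace through Lusztig's parameterisation explicitly: for each of the thirteen unipotent characters one computes the triality orbit of the corresponding symbol at the level of $\Irr(\bW(\D_4))$, using the Springer-type embedding $\Irr(\bW) \hookrightarrow \mathcal{N}_{\bH}$ and its compatibility with outer automorphisms. This yields precisely the five non-invariant symbols listed in \eqref{eq:stable-chars-D4}, the remaining eight corresponding to triality-invariant characters. The main obstacle is therefore this $\D_4$ case, where the action of the exceptional outer automorphism on the parameterising combinatorial data is the least transparent and requires explicit verification against the character table; the other types reduce to standard manipulations of symbols or direct inspection.
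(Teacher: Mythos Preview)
The paper does not give its own proof of this lemma; it is quoted as a known result due to Lusztig, formalised by Digne--Michel and Malle, with explicit citations to \cite[Proposition 6.6]{digne-michel:1990:lusztigs-parametrization}, \cite[\S1]{malle:1991:darstellungstheorie-galoisgruppen} and \cite[Proposition 3.7]{malle:2007:height-0-characters}. Your sketch follows essentially the same line as those references: reduce to the combinatorial action on Lusztig's parameterising data, then inspect types $\A_n$, $\E_6$, $\D_n$ separately, with the $\D_4$ triality case requiring a direct check.

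Two small corrections are worth making. First, the number of unipotent characters of $\D_4(q)$ is not thirteen; you are presumably counting $|\Irr(\bW(\D_4))|$, but you should also account for the cuspidal unipotent character. Second, the list in \eqref{eq:stable-chars-D4} comprises six characters (the two $\pm$ pairs contribute four), not five, so your arithmetic ``five non-invariant, eight invariant'' does not add up. Neither slip affects the structure of the argument, but you should fix the counts before presenting this as a proof.
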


\begin{rem}
Recall that we will be interested in the action of $A_{G^{\star}}(s)$ by conjugation on the unipotent characters of $C_{G^{\star}}(s)^{\circ}$ for some semisimple element $s \in \bT_0^{\star}$. Note that each such automorphism is of the form stated in \cref{lem:invariance-unipotent}.
\end{rem}

\section{Explicit Descriptions for Half-Spin Groups}\label{sec:clarification-typeD}
When dealing with the simple algebraic groups of type $\D_n$, with $n \geqslant 4$, we must be quite careful. In this section we assume $\bG$ is such a group. Note that the notational conventions we develop in this section for such groups shall be maintained throughout.

\subsection{Describing Half-Spin Groups}
\begin{pa}
We start by considering precisely the structure of the fundamental group $\Pi = \mathbb{Z}\Omega/\mathbb{Z}\Phi$. From \cite[Plate IV(VIII)]{bourbaki:2002:lie-groups-chap-4-6} we have the fundamental group is given by $\Pi = \{\mathbb{Z}\Phi,\varpi_1 + \mathbb{Z}\Phi, \varpi_{n-1} + \mathbb{Z}\Phi, \varpi_n + \mathbb{Z}\Phi\}$, which is isomorphic to $C_2 \times C_2$ if $n \equiv 0 \pmod{2}$ and $C_4$ if $n \equiv 1 \pmod{2}$, (here $C_m$ is a cyclic group of order $m$). Recall that as $\bG$ is semisimple we have the isomorphism type of $\bG$ is determined by the image of $X$ in the fundamental group. If the image of $X$ is the subgroup generated by $\varpi_1 + \mathbb{Z}\Phi$ then $\bG$ is a special orthogonal group $\SO_{2n}(\mathbb{K})$. If $n \equiv 0 \pmod{2}$ then there are two other non-trivial cases, namely if the image of $X$ is the subgroup generated by $\varpi_{n-1} + \mathbb{Z}\Phi$ or $\varpi_n + \mathbb{Z}\Phi$ then $\bG$ is a half-spin group $\HSpin_{2n}(\mathbb{K})$, (see for instance \cite[\S 7]{carter:1981:centralisers-s/s-classical-groups}). The problem arises here in the choice over the root datum of a half-spin group. Note these groups are isomorphic because there exists an isomorphism of their root data which exchanges the weights $\varpi_{n-1}$ and $\varpi_n$. We will now fix a choice of half-spin group but it will be clear, because of this isomorphism, that the results we prove do not depend upon this choice.
\end{pa}

\begin{assumption}
If $\bG$ is a half-spin group then we assume the image of $X$ in the fundamental group $\Pi$ is $\langle \varpi_n + \mathbb{Z}\Phi \rangle$.
\end{assumption}

\begin{pa}
Let us assume now that $\bG$ is a half-spin group and fix a basis $\{\chi_1,\dots,\chi_n\}$ of $\mathbb{R}X$ such that $\chi_1 = \varpi_n$ and $\chi_i = \alpha_i$ for $2 \leqslant i \leqslant n$. We write $A$ for the change of basis matrix of $\mathbb{R}X$ sending the simple roots $\Delta$ to $\{\chi_1,\dots,\chi_n\}$. This matrix has the form
\begin{equation*}
A = \left[\begin{array}{c|ccc}
a_1 & a_2 & \cdots & a_n\\\hline
0 &  &  & \\
\vdots &  & I_{n-1} & \\
0 &  &  & 
\end{array}\right],
\end{equation*}
where $(a_1,\dots,a_n) = (\frac{1}{2},1,\frac{3}{2},2,\dots,\frac{(n-2)}{2},\frac{(n-2)}{4},\frac{n}{4})$ and $I_{n-1}$ is the $(n-1) \times (n-1)$ identity matrix. To our chosen basis $\{\chi_1,\dots,\chi_n\}$ of $\mathbb{R}X$ we have a dual basis $\{\gamma_1,\dots,\gamma_n\}$ of $\mathbb{R}\widecheck{X}$. Let $B$ be the change of basis matrix of $\mathbb{R}\widecheck{X}$ sending the simple coroots $\widecheck{\Delta}$ to $\{\gamma_1,\dots,\gamma_n\}$. The matrices $A$ and $B$ satisfy the condition $ACB^T = I_n$ where $C = (\langle \alpha_i, \widecheck{\alpha}_j \rangle)_{1 \leqslant i,j \leqslant n}$ is the \emph{Cartan matrix} and $I_n$ is the $n \times n$ identity matrix.

Let us now consider the root datum of the associated dual group $\bG^{\star}$. If $X$ has image $\langle \varpi_n + \mathbb{Z}\Phi \rangle$ in $\Pi$ then for $X$ to be isomorphic to $\widecheck{X}^{\star}$, (and for such an isomorphism to preserve the pairing $\langle - , - \rangle$), we must have $\widecheck{X}^{\star}$ has image $\langle \widecheck{\varpi}_n + \mathbb{Z}\widecheck{\Phi} \rangle$ in $\widecheck{\Pi}$. From this we can easily calculate the image of $\widecheck{X}$ in $\widecheck{\Pi}$ as follows. Recall that the matrix $B$ expresses the decomposition of the basis of $\mathbb{R}\widecheck{X}$ in terms of the simple coroots. The Cartan matrix expresses the decomposition of the simple coroots in terms of the fundamental dominant coweights hence $BC = A^{-T}$ gives the decomposition of $\{\gamma_1,\dots,\gamma_n\}$ in terms of $\{\widecheck{\varpi}_1,\dots,\widecheck{\varpi}_n\}$. We easily determine that this matrix has the form
\begin{equation*}
A^{-T} = \left[\begin{array}{c|ccc}
a_1' & 0 & \cdots & 0\\\hline
a_2' &  &  & \\
\vdots &  & I_{n-1} & \\
a_n' &  &  & 
\end{array}\right],
\end{equation*}
where $(a_1',\dots,a_n') = (2,-2,-3,\dots,-(n-2),-\frac{(n-2)}{2},-\frac{n}{2})$.

From this we see that the image of $\widecheck{X}$ in $\widecheck{\Pi}$ is determined by the image of $\gamma_n$ in $\widecheck{\Pi}$, i.e.\ the element $\frac{n}{2}\widecheck{\varpi}_1 + \widecheck{\varpi}_n + \mathbb{Z}\widecheck{\Phi}$. Therefore we have the image of $\widecheck{X}$ in $\widecheck{\Pi}$ is $\langle \widecheck{\varpi}_n + \mathbb{Z}\widecheck{\Phi} \rangle$ if $n \equiv 0 \pmod{4}$ and $\langle \widecheck{\varpi}_{n-1} + \mathbb{Z}\widecheck{\Phi} \rangle$ if $n \equiv 2 \pmod{4}$. In particular, using the dual isomorphism $\widecheck{X} \to X^{\star}$, we must have the image of $X^{\star}$ in $\Pi$ is $\langle \varpi_n + \mathbb{Z}\Phi \rangle$ if $n \equiv 0 \pmod{4}$ and $\langle \varpi_{n-1} + \mathbb{Z}\Phi \rangle$ if $n \equiv 2 \pmod{4}$. As a consequence we see that the dual group of a half-spin group is again isomorphic to a half-spin group but its root datum depends upon $n$. It will be useful for us to also describe $\Ker(\delta_{\simc}^{\star})$ but to do this we need the following lemma.
\end{pa}

\begin{lem}[{}{see \cite[Proposition 4.1]{bonnafe:2006:sln}}]\label{lem:centre-fund-grp}
Let $\bG$ be a connected semisimple algebraic group. There exists a canonical surjective homomorphism $\mathbb{Q} \otimes_{\mathbb{Z}} \widecheck{X}(\bT_0) \to \bT_0$ which induces an isomorphism $(\mathbb{Z}\widecheck{\Omega}/\widecheck{X})_{p'} \to Z(\bG)$.
\end{lem}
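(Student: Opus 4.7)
The plan is to construct the map $\pi$ via the canonical identification $\bT_0 \cong \widecheck{X}(\bT_0) \otimes_{\mathbb{Z}} \mathbb{K}^{\times}$ coming from evaluation, and then to deduce the required isomorphism by identifying certain sublattices inside $\mathbb{Q} \otimes_{\mathbb{Z}} \widecheck{X}(\bT_0)$. Concretely, I would define $\pi$ as the map induced on tensor products by the identity on $\widecheck{X}$ and the composite $\mathbb{Q} \twoheadrightarrow \mathbb{Q}/\mathbb{Z} \twoheadrightarrow (\mathbb{Q}/\mathbb{Z})_{p'} \xrightarrow{\imath} \mathbb{K}^{\times}$. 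Since $\widecheck{X}$ is $\mathbb{Z}$-free (hence flat) and the composite $\mathbb{Q} \to \mathbb{K}^{\times}$ is surjective with kernel $\mathbb{Z}[1/p]$, the map $\pi$ is automatically surjective with kernel $\mathbb{Z}[1/p] \otimes_{\mathbb{Z}} \widecheck{X}$; in particular $\widecheck{X} \subseteq \ker\pi$.

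Next I would pin down $\pi^{-1}(Z(\bG))$. Using the formula $\beta(\widecheck{\lambda} \otimes x) = x^{\langle \beta,\widecheck{\lambda}\rangle}$ for $\beta \in X$ viewed as a character of $\bT_0$, together with the characterisation $Z(\bG) = \bigcap_{\beta \in \Phi} \ker\beta$, a direct computation shows that $\pi(y) \in Z(\bG)$ if and only if $\langle \beta, y\rangle \in \mathbb{Z}[1/p]$ for every $\beta \in \Phi$. Writing $y \in \mathbb{Q}\widecheck{X}$ in the $\mathbb{Q}$-basis of fundamental coweights and using the defining pairing $\langle \alpha_i, \widecheck{\varpi}_j\rangle = \delta_{ij}$, this condition translates cleanly to $y \in \mathbb{Z}[1/p] \otimes_{\mathbb{Z}} \mathbb{Z}\widecheck{\Omega}$. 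Taking $y \in \mathbb{Z}\widecheck{\Omega}$ gives the easy inclusion $\pi(\mathbb{Z}\widecheck{\Omega}) \subseteq Z(\bG)$.

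For the reverse inclusion, given $z \in Z(\bG)$ I would write $z = \pi(p^{-k} y')$ with $y' \in \mathbb{Z}\widecheck{\Omega}$, and exploit the fact that $Z(\bG)$ has order coprime to $p$ (which follows from $\bG$ being semisimple and $\mathbb{K}$ having characteristic $p$, so that $\mu_p(\mathbb{K}) = 1$). Choosing $m \in \mathbb{Z}$ with $mp^k \equiv 1$ modulo the order of $z$, one has $z = z^{mp^k} = \pi(m y')$, showing $z \in \pi(\mathbb{Z}\widecheck{\Omega})$. It then remains only to compute $\ker(\pi|_{\mathbb{Z}\widecheck{\Omega}}) = (\mathbb{Z}[1/p] \otimes \widecheck{X}) \cap \mathbb{Z}\widecheck{\Omega}$: an element of $\mathbb{Z}\widecheck{\Omega}$ lies in this intersection if and only if its image in the finite abelian group $\mathbb{Z}\widecheck{\Omega}/\widecheck{X}$ has $p$-power order, so the quotient $\mathbb{Z}\widecheck{\Omega}/\ker(\pi|_{\mathbb{Z}\widecheck{\Omega}})$ is exactly the prime-to-$p$ part $(\mathbb{Z}\widecheck{\Omega}/\widecheck{X})_{p'}$. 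The first isomorphism theorem then delivers the claim.

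The main obstacle is the careful bookkeeping in step two and the reverse inclusion of step three, because there are two conceptually different roles being played by $p$: the removal of $p$-primary content forced on us by passing from $\mathbb{Q}/\mathbb{Z}$ to $\mathbb{K}^{\times}$ through $\imath$, and the finiteness of $Z(\bG)$ together with its order being coprime to $p$. Both are needed to show that $\pi(\mathbb{Z}\widecheck{\Omega})$ actually fills out all of $Z(\bG)$ (not just a finite-index subgroup), and to see that only the $p$-torsion of $\mathbb{Z}\widecheck{\Omega}/\widecheck{X}$ is being killed. Everything else is formal manipulation with the short exact sequence $0 \to \mathbb{Z}[1/p] \to \mathbb{Q} \to (\mathbb{Q}/\mathbb{Z})_{p'} \to 0$ tensored with the free $\mathbb{Z}$-module $\widecheck{X}$.
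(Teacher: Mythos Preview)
The paper does not supply its own proof of this lemma; it simply records the statement and refers to \cite[Proposition 4.1]{bonnafe:2006:sln}. So there is nothing in the paper to compare your argument against directly.

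That said, your proposal is correct and is essentially the standard argument one finds in Bonnaf\'e's reference. The construction of $\pi$ via $\mathbb{Q} \twoheadrightarrow (\mathbb{Q}/\mathbb{Z})_{p'} \xrightarrow{\imath} \mathbb{K}^{\times}$ tensored with $\widecheck{X}$, the identification $\pi^{-1}(Z(\bG)) = \mathbb{Z}[1/p] \otimes_{\mathbb{Z}} \mathbb{Z}\widecheck{\Omega}$ via the pairing with simple roots, and the final computation of $\ker(\pi|_{\mathbb{Z}\widecheck{\Omega}})$ as the preimage of the $p$-part of $\mathbb{Z}\widecheck{\Omega}/\widecheck{X}$ are all sound. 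Your handling of the reverse inclusion in step three is the one place where care is genuinely needed, and your use of the fact that $Z(\bG)$, being a finite subgroup of a torus over a field of characteristic $p$, has order prime to $p$ is exactly the right input. The paper's subsequent remark that the isomorphism depends on the choice of $\imath$ is consistent with your construction.
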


Note that the above isomorphism depends upon the choice of $\imath$. Taking the above lemma in the case where $\bG$ is simply connected of type $\D_n$ this says we have a natural isomorphism $\widecheck{\Pi} \overset{\sim}{\longrightarrow} Z(\bG)$, (recall we assume $p \neq 2$). We now make the following convention regardless of the congruence of $n \pmod{2}$.

\begin{assumption}
Assume $\bG$ is a simply connected group of type $\D_n$ then we denote the centre of $\bG$ by $Z(\bG) = \{1,\hat{z}_1,\hat{z}_{n-1},\hat{z}_n\}$\index{zihat@$\hat{z}_i$}. We fix the notation such that $\widecheck{\varpi}_{n-1} + \mathbb{Z}\widecheck{\Phi} \mapsto \hat{z}_{n-1}$ and $\widecheck{\varpi}_n + \mathbb{Z}\widecheck{\Phi} \mapsto \hat{z}_n$ under the isomorphism specified by \cref{lem:centre-fund-grp}.
\end{assumption}

\noindent Under this convention whenever $\bG$ is a half-spin group we will have $\Ker(\delta_{\simc}^{\star}) = \langle \hat{z}_n \rangle$, however $\Ker(\delta_{\simc})$ will be $\langle \hat{z}_n \rangle$ if $n \equiv 0 \pmod{4}$ and $\langle \hat{z}_{n-1} \rangle$ if $n \equiv 2 \pmod{4}$. Furthermore whenever $\bG$ is a special orthogonal group we will have $\Ker(\delta_{\simc}^{\star}) = \Ker(\delta_{\simc}) = \langle \hat{z}_1 \rangle$.

\subsection{The Springer Correspondence}
\begin{pa}\label{pa:degen-springer-corr}
In this section we wish to remove the ambiguity over the labelling of elements in $\Clu(\bG)$ and $\Irr(\bW)$ and in particular express concretely the Springer correspondence for degenerate elements. Recall that for this situation to occur we must necessarily have $n \equiv 0 \pmod{2}$. To clarify the Springer correspondence we will use the argument given in \cite[\S13.3]{carter:1993:finite-groups-of-lie-type}.

Let $\lambda \vdash 2n$ be a degenerate partition then we can express $\lambda$ as $(2\eta_1,2\eta_1,\dots,2\eta_s,2\eta_s)$, where $s$, $\eta_i \in \mathbb{N}$. We denote by $\eta$ the sequence $(\eta_1,\dots,\eta_s)$ then $\eta$ is a partition of $n/2$. If $\xi = (\xi_1,\dots,\xi_r) \vdash n/2$ is a partition of $n/2$ then there are two $\bG$-conjugacy classes of Levi subgroups with semisimple type $\A_{2\xi_1-1}\cdots\A_{2\xi_r-1}$. Two class representatives can be given by standard Levi subgroups and the two possibilities depend on whether the root $\alpha_n$ or $\alpha_{n-1}$ is contained in the root system of the standard Levi. We will denote by $\bL_{\xi}^+$\index{Lxi@$\bL_{\xi}^{\pm}$} the Levi subgroup whose root system contains $\alpha_{n-1}$ and $\bL_{\xi}^-$ the Levi subgroup whose root system contains $\alpha_n$. By the Bala--Carter theorem, (see \cite[Theorem 5.9.5]{carter:1993:finite-groups-of-lie-type}), if $\mathcal{O} \in \Clu(\bG)$ is a unipotent class parameterised by the degenerate partition $\lambda$ then either $\mathcal{O} \cap \bL_{\eta}^+$ contains the regular unipotent class of $\bL_{\eta}^+$ or $\mathcal{O} \cap \bL_{\eta}^-$ contains the regular unipotent class of $\bL_{\eta}^-$.

We now turn to the irreducible characters of $\bW$. Assume $[\Lambda]_{\pm} \in \Irr(\bW)$ are the two irreducible characters parameterised by the degenerate symbol $[\Lambda]$. Adopting the notation above we denote the Weyl groups of the standard Levi subgroups $\bL_{\xi}^{\pm}$ by $\bW(\A_{\xi}^{\pm})$. By \cite[Theorem 5.4.5]{geck-pfeiffer:2000:characters-of-finite-coxeter-groups} and \cite[Proposition 5.6.3]{geck-pfeiffer:2000:characters-of-finite-coxeter-groups} there is a unique partition $\xi \vdash n/2$ such that $\{[\Lambda]_+,[\Lambda]_-\} = \{j_{\bW(\A_{\xi^*}^+)}^{\bW}(\sgn),j_{\bW(\A_{\xi^*}^-)}^{\bW}(\sgn)\}$ where $\sgn \in \Irr(\bW(\A_{\xi^*}^{\pm}))$ denotes the sign character and $\xi^*$ denotes the dual partition. With this we now distinguish the degenerate objects in the following way.
\end{pa}

\begin{assumption}
We assume the $\pm$ convention to be chosen such that $[\Lambda]_{\pm} = j_{\bW(\A_{\xi^*}^{\pm})}^{\bW}(\sgn)$\index{Lambda@$[\Lambda]_{\pm}$} for some (unique) partition $\xi \vdash n/2$. Furthermore $\mathcal{O}_{\lambda}^{\pm} \in \Clu(\bG)$\index{O@$\mathcal{O}_{\pm}$} is the (unique) unipotent class such that $\mathcal{O}_{\lambda}^{\pm} \cap \bL_{\eta}^{\pm}$ contains the regular unipotent class of $\bL_{\eta}^{\pm}$, with $\eta$ as above.
\end{assumption}

\begin{pa}
We now come to an interesting dichotomy, (which is the duality discussed by Spaltenstein in \cite[Chapitre III]{spaltenstein:1982:classes-unipotentes}). The way we have identified the two degenerate unipotent classes will allow us to compute the order of the component groups of their centraliser, however it will not allow us to compute the Springer correspondence. For this we must identify these classes as Richardson classes associated to their canonical parabolic subgroup. Let us denote by $\eta^*$ the partition of $n/2$ dual to $\eta$ then we have the following result.
\end{pa}

\begin{prop}[{}{see \cite[Proposition II.7.6]{spaltenstein:1982:classes-unipotentes}}]\label{cor:richardson-degen-classes}
The unipotent classes $\mathcal{O}_{\lambda}^{\pm}$ are Richardson classes for parabolic subgroups with Levi complement $\bL_{\eta^*}^{\pm}$ if $n \equiv 0 \pmod{4}$ and $\bL_{\eta^*}^{\mp}$ if $n \equiv 2\pmod{4}$. Furthermore, let us denote by $(n_{\alpha_{n-1}}^{\pm},n_{\alpha_n}^{\pm})$ the weights of the weighted Dynkin diagram of $\mathcal{O}_{\lambda}^{\pm}$ associated to the nodes $\alpha_{n-1}$ and $\alpha_n$. Then we have $(n_{\alpha_{n-1}}^+,n_{\alpha_n}^+) = (a,b)$ and $(n_{\alpha_{n-1}}^-,n_{\alpha_n}^-) = (b,a)$ where $b = 2 - a$ and
\begin{equation*}
a = \begin{cases}
0 &\text{if }n\equiv 0 \pmod{4},\\
2 &\text{if }n\equiv 2 \pmod{4}.
\end{cases}
\end{equation*}
\end{prop}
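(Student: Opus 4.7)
The first assertion is a direct translation of Spaltenstein's \cite[Proposition II.7.6]{spaltenstein:1982:classes-unipotentes} into the present notation. The plan is to combine the Bala--Carter description of $\mathcal{O}_\lambda^\pm$ (the unique class whose intersection with $\bL_\eta^\pm$ contains the regular unipotent of that Levi) with Spaltenstein's duality on unipotent classes of $D_n$: under this duality, the Bala--Carter pair ``$(\bL_\eta^\pm,\mathrm{regular})$'' is exchanged with a Richardson pair whose Levi has semisimple type $A_{2\eta_1^*-1}\cdots A_{2\eta_r^*-1}$, and there are precisely two standard Levi subgroups of this type, namely $\bL_{\eta^*}^+$ and $\bL_{\eta^*}^-$. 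Consequently $\mathcal{O}_\lambda^\pm = \Ind_{\bL_{\eta^*}^\epsilon}^{\bG}(0)$ for some $\epsilon \in \{+,-\}$; extracting the correct sign from Spaltenstein's explicit formulas yields the stated proposition, with the parity dependence on $n \pmod{4}$ arising from the interaction between Spaltenstein's duality and the graph automorphism $\alpha_{n-1} \leftrightarrow \alpha_n$ on very even partitions.

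For the ``Furthermore'' part, I would apply the Dynkin--Kostant algorithm directly to $\lambda = (2\eta_1,2\eta_1,\ldots,2\eta_s,2\eta_s)$. For each part $2\eta_i$ (appearing with multiplicity two) one contributes the odd integers $(2\eta_i - 1, 2\eta_i - 3, \ldots, -(2\eta_i - 1))$ twice to the multiset of eigenvalues of the neutral element $h$ on the defining representation of $\mathfrak{so}_{2n}$. Arranging the non-negative half in decreasing order as $(h_1,\ldots,h_n)$ and using $n_{\alpha_i} = h_i - h_{i+1}$ for $1 \leqslant i \leqslant n-1$ together with $n_{\alpha_n} = h_{n-1} + h_n$, one verifies that the sequence ends in at least two consecutive $1$'s, so $(h_{n-1},h_n) = (1,1)$. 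The Weyl-group ambiguity of the algorithm is the freedom to replace $h_n$ by $-h_n$, producing $(n_{\alpha_{n-1}}, n_{\alpha_n}) \in \{(0,2),(2,0)\}$ while leaving all other Dynkin labels unchanged. Because every label lies in $\{0,2\}$ the resulting class is even, hence the parabolic $\bP_I$ with $I = \{\alpha \in \Delta \mid \langle \alpha, h \rangle = 0\}$ is a Richardson parabolic; matching this with the first assertion then fixes the assignment of $(a,b)$ to $\mathcal{O}_\lambda^+$ and of $(b,a)$ to $\mathcal{O}_\lambda^-$.

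The main obstacle is the sign bookkeeping, and in particular the parity flip at $n \equiv 2 \pmod{4}$. This flip can be traced back to the observation made earlier in this section: under our convention that the image of $X$ in $\Pi$ is $\langle \varpi_n + \mathbb{Z}\Phi \rangle$, the image of $X^\star$ in $\Pi$ coincides with that of $X$ only when $n \equiv 0 \pmod{4}$ and equals $\langle \varpi_{n-1} + \mathbb{Z}\Phi \rangle$ when $n \equiv 2 \pmod{4}$, so the two branches of the $D_n$ fork are interchanged under duality in the latter case. Propagating this swap consistently through the Bala--Carter Levi, the Richardson Levi, and the fork entries of the weighted Dynkin diagram is the delicate aspect of the argument.
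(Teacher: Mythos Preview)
Your overall plan for the first assertion (translate Spaltenstein) and for the ``Furthermore'' part (Dynkin--Kostant computation, then match with the Richardson Levi) is sound. The paper's own argument is shorter: it simply remarks that since the classes are even, they are Richardson classes for their \emph{canonical} parabolic subgroups (the parabolic whose Levi is determined by the nodes of weight $0$ in the weighted Dynkin diagram). Thus once the first assertion tells you the Richardson Levi is $\bL_{\eta^*}^{\pm}$ or $\bL_{\eta^*}^{\mp}$, the fork labels $(n_{\alpha_{n-1}},n_{\alpha_n})$ are forced, with no separate eigenvalue computation needed. Your direct Dynkin--Kostant approach is a legitimate alternative and has the merit of being self-contained, at the cost of an explicit calculation that the paper avoids.

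However, your final paragraph contains a genuine misattribution. The $n\pmod 4$ sign flip in this proposition has nothing to do with the half-spin root datum or with the relationship between $X$ and $X^{\star}$. The statement concerns unipotent classes, Richardson induction, and weighted Dynkin diagrams, all of which are invariant under central isogenies; it holds verbatim in $\bG_{\ad}$, $\SO_{2n}$, $\HSpin_{2n}$, and $\bG_{\simc}$ alike. The parity dependence is intrinsic to Spaltenstein's order-reversing duality on very even classes in type $\D_n$ and would appear even if you never mentioned half-spin groups. The earlier discussion of $X^{\star}$ versus $X$ is needed later in the paper (for component groups and for matching $\bW(\A_{n-1}^{\pm})$ with $\bW^{\star}(\A_{n-1}^{\pm})$ under the anti-isomorphism), but it is not the mechanism behind the present proposition. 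If you want to justify the sign yourself rather than cite Spaltenstein, you must track it through the combinatorics of the duality map on partitions, not through the lattice $X$.
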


\begin{rem}
The statement concerning the weighted Dynkin diagram follows from \cite[Proposition II.7.6]{spaltenstein:1982:classes-unipotentes} because the classes are even so they are Richardson classes for their canonical parabolic subgroups, (see \cite[\S7.9 - Proposition]{humphreys:1995:conjugacy-classes}).
\end{rem}

\begin{pa}
Assume $\bL$ is a Levi subgroup of $\bG$ contained in a parabolic subgroup $\bP$ with unipotent radical $\bU_{\bP}$. In \cite{lusztig-spaltenstein:1979:induced-unipotent-classes} Lusztig and Spaltenstein have defined an induction map $\Ind_{\bL}^{\bG}$ taking a unipotent conjugacy class of $\bL$ to a unipotent conjugacy class of $\bG$, which is defined in the following way. If $\mathcal{O}$ is a unipotent class of $\bL$ then $\Ind_{\bL}^{\bG}(\mathcal{O})$ is the unique unipotent conjugacy class of $\bG$ such that $\Ind_{\bL}^{\bG}(\mathcal{O}) \cap \mathcal{O}\bU_{\bP}$ is dense in $\mathcal{O}\bU_{\bP}$. They show that this does not depend on the choice of $\bP$ and depends only on the pair $(\bL,\mathcal{O})$ up to $\bG$ conjugacy. Hence we may assume that $\bL$ is a standard Levi subgroup of $\bG$. Note that the statements in \cite{lusztig-spaltenstein:1979:induced-unipotent-classes} have some restrictions but these were removed in \cite{lusztig:1984:intersection-cohomology-complexes}.
\end{pa}

\begin{prop}[Lusztig and Spaltenstein, {\cite[Theorem 3.5]{lusztig-spaltenstein:1979:induced-unipotent-classes}}]\label{prop:lusztig-spaltenstein}
Assume $\mathcal{O}$ is a unipotent conjugacy class of a Levi subgroup $\bL$ of $\bG$ containing $\bT_0$. Write $\rho(\mathcal{O}) \in \Irr(\bW(\bL))$ for the character corresponding to $(\mathcal{O},\Ql)$ under the Springer correspondence of $\bL$. Let $\tilde{\mathcal{O}} = \Ind_{\bL}^{\bG}(\mathcal{O})$ be the induced class and write $\rho(\tilde{\mathcal{O}}) \in \Irr(\bW)$ for the character corresponding to $(\tilde{\mathcal{O}},\Ql)$ under the Springer correspondence of $\bG$ then $\rho(\tilde{\mathcal{O}}) = j_{\bW(\bL)}^{\bW}(\rho(\mathcal{O}))$.
\end{prop}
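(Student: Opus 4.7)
The statement is essentially attributed to Lusztig--Spaltenstein (with later extensions), so my plan is to outline how one verifies the two defining properties of $j$-induction, namely that $j_{\bW(\bL)}^{\bW}(\rho(\mathcal{O}))$ is the unique irreducible constituent of $\mathrm{Ind}_{\bW(\bL)}^{\bW}(\rho(\mathcal{O}))$ whose $b$-invariant equals $b(\rho(\mathcal{O}))$. Thus it suffices to show (a) $b(\rho(\tilde{\mathcal{O}})) = b(\rho(\mathcal{O}))$ and (b) $\rho(\tilde{\mathcal{O}})$ occurs as a constituent of $\mathrm{Ind}_{\bW(\bL)}^{\bW}(\rho(\mathcal{O}))$.

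For (a), I would exploit the fact that for irreducible characters arising from the pair $(\mathcal{O},\Ql)$ (trivial local system) one has the equality $b(\rho(\mathcal{O})) = d(\rho(\mathcal{O}))$ in Spaltenstein's inequality recalled just after \cref{prop:lusztig-j-ind}. Hence fixing $u \in \tilde{\mathcal{O}}$ and $u_L \in \mathcal{O}$, it suffices to show $\dim \mathfrak{B}_u^{\bG} = \dim \mathfrak{B}_{u_L}^{\bL}$. By the standard formula $\dim \mathfrak{B}_x^{\bH} = \tfrac{1}{2}(\dim C_{\bH}(x) - \mathrm{rank}(\bH))$, and since a Levi subgroup has the same rank as $\bG$, this reduces to the dimension identity $\dim C_{\bG}(u) = \dim C_{\bL}(u_L)$, which is a key property of Lusztig--Spaltenstein induction proved in \cite{lusztig-spaltenstein:1979:induced-unipotent-classes}.

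For (b), the geometric input is the projection $\bG/\bP \twoheadrightarrow \{\mathrm{pt}\}$ and the Springer-theoretic fact that, for $u \in \tilde{\mathcal{O}} \cap \mathcal{O}\bU_{\bP}$ in the dense open piece, pulling back Borel subgroups of $\bL$ containing $u_L$ through $\bP$ yields a surjection of varieties which, on top cohomology, realizes $\mathrm{Ind}_{\bW(\bL)}^{\bW}$ of the Springer representation of $\bW(\bL)$ on $H^{2d(\rho(\mathcal{O}))}(\mathfrak{B}_{u_L}^{\bL}, \Ql)$. The image contains $H^{2d(\rho(\tilde{\mathcal{O}}))}(\mathfrak{B}_u^{\bG}, \Ql)$ as a quotient in the appropriate degree, which forces $\rho(\tilde{\mathcal{O}})$ to be a constituent. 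Combined with (a) and the uniqueness characterization of $j$-induction we conclude.

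The main obstacle is step (b): making the geometric occurrence statement rigorous requires a careful analysis of the Springer sheaf along the parabolic projection, and the general case (removing restrictions on characteristic, type, and the local system) was only completed in \cite{lusztig:1984:intersection-cohomology-complexes} via the theory of character sheaves. Since both references fully cover our setting (good characteristic and trivial local system), the cleanest route here is simply to invoke their theorem.
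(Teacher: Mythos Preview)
The paper does not prove this proposition at all: it is stated as a citation of Lusztig--Spaltenstein \cite[Theorem 3.5]{lusztig-spaltenstein:1979:induced-unipotent-classes} and used as a black box. So there is no ``paper's own proof'' to compare your proposal against.

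That said, your outline is a fair sketch of the original Lusztig--Spaltenstein argument. Step (a) is correct: the equality $b(\rho(\mathcal{O})) = d(\rho(\mathcal{O}))$ for trivial local systems, combined with the dimension identity $\dim C_{\bG}(u) = \dim C_{\bL}(u_L)$ for induced classes, gives the $b$-invariant match. Step (b) is the substantive part, and your description is accurate in spirit but, as you yourself note, not rigorous as written; the occurrence of $\rho(\tilde{\mathcal{O}})$ in the induced representation is precisely what Lusztig--Spaltenstein establish via their analysis of the fibres of the parabolic map on Springer fibres. Your final sentence --- that one should simply invoke the cited theorem --- is exactly what the paper does, and is the appropriate response here since the result is quoted, not reproved.
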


\begin{pa}
Using \cref{cor:richardson-degen-classes} and \cite[Proposition 1.9(b)]{lusztig-spaltenstein:1979:induced-unipotent-classes} we have $\mathcal{O}_{\lambda}^{\pm}$ is $\Ind_{\bL_{\eta^*}^{\pm}}^{\bG}(\mathcal{O}_{0})$ if $n \equiv 0 \pmod{4}$ and $\Ind_{\bL_{\eta^*}^{\mp}}^{\bG}(\mathcal{O}_{0})$ if $n \equiv 2 \pmod{4}$, where $\mathcal{O}_{0}$ denotes the trivial unipotent class. Note that there is a restriction on \cite[Proposition 19.(b)]{lusztig-spaltenstein:1979:induced-unipotent-classes} that $p$ is sufficiently large but this is only to ensure that unipotent classes are parameterised by their weighted Dynkin diagrams, which is known to hold in good characteristic. The Springer character of the trivial class is always the sign character, therefore we have
\begin{equation}\label{eq:degen-spring-correspondence}
\rho(\mathcal{O}_{\lambda}^{\pm}) = \begin{cases}
j_{\bW(\A_{\eta^*}^{\pm})}^{\bW}(\sgn) &\text{if }n\equiv 0\pmod{4},\\
j_{\bW(\A_{\eta^*}^{\mp})}^{\bW}(\sgn) &\text{if }n\equiv 2\pmod{4}.
\end{cases}
\end{equation}
This now concretely specifies the Springer correspondence in the degenerate case.
\end{pa}

\subsection{Component Groups in Half-Spin Groups}
\begin{pa}
We now come to the determination of $|A_{\bG}(u^{\pm})|$ where $u^{\pm} \in \mathcal{O}_{\lambda}^{\pm}$ are class representatives for degenerate unipotent classes. From the description of the component groups given in \cite[\S14.3]{lusztig:1984:intersection-cohomology-complexes}, \cite[\S10.6]{lusztig:1984:intersection-cohomology-complexes} and \cite[\S13.1]{carter:1993:finite-groups-of-lie-type} we have $|A_{\bG}(u^+)| = |A_{\bG}(u^-)|$ except when $\bG$ is a half-spin group, in which case we always have $|A_{\bG}(u^+)| \neq |A_{\bG}(u^-)|$. Assume $\bG$ is a half-spin group then we claim that
\begin{equation*}
\begin{gathered}
|A_{\bG}(u^+)| = \begin{cases}
2 &\text{if }n\equiv0\pmod{4},\\
1 &\text{if }n\equiv2\pmod{4},
\end{cases}
\end{gathered}
\qquad
\begin{gathered}
|A_{\bG}(u^-)| = \begin{cases}
1 &\text{if }n\equiv0\pmod{4},\\
2 &\text{if }n\equiv2\pmod{4},
\end{cases}
\end{gathered}
\end{equation*}
Let us now verify this claim. Firstly let $u_{\ad}^{\pm} = \delta_{\ad}(u^{\pm})$ be corresponding elements in the adjoint group then we have $|A_{\bG_{\ad}}(u_{\ad}^{\pm})| = 1$, (see \cite[\S13.1]{carter:1993:finite-groups-of-lie-type}). In particular we must have $A_{\bG}(u^{\pm}) = Z_{\bG}(u^{\pm})$ hence $|A_{\bG}(u^{\pm})| \in \{1,2\}$ depending upon $Z_{\bG}(u^{\pm})$.

The intersection $\mathcal{O}_{\lambda}^{\pm} \cap \bL_{\eta}^{\pm}$ contains the regular unipotent class of $\bL_{\eta}^{\pm}$ therefore we may take $u^{\pm} \in \mathcal{O}_{\lambda}^{\pm}$ to be such that it is a regular unipotent element of $\bL_{\eta}^{\pm}$. We have a natural embedding $C_{\bL_{\eta}^{\pm}}(u^{\pm}) \to C_{\bG}(u^{\pm})$, which induces an embedding $A_{\bL_{\eta}^{\pm}}(u^{\pm}) \to A_{\bG}(u^{\pm})$. As $u^{\pm}$ is a regular unipotent element we have $A_{\bL_{\eta}^{\pm}}(u^{\pm}) \cong \mathcal{Z}(\bL_{\eta}^{\pm})$ where $\mathcal{Z}(\bL_{\eta}^{\pm})$ denotes the component group $Z(\bL_{\eta}^{\pm})/Z(\bL_{\eta}^{\pm})^{\circ}$, (see for example the proof of \cite[Proposition 14.24]{digne-michel:1991:representations-of-finite-groups-of-lie-type}). Hence to determine whether $|A_{\bG}(u^{\pm})| = 2$ or 1 it is enough to determine when $|\mathcal{Z}(\bL_{\eta}^{\pm})| = 2$ or 1.

To do this calculation we will use a result of Digne--Lehrer--Michel. Recall that we have a natural embedding $Z(\bG) \to Z(\bL_{\eta}^{\pm})$, which induces a surjective map $Z(\bG) \to \mathcal{Z}(\bL_{\eta}^{\pm})$ by \cite[Proposition 4.2]{bonnafe:2006:sln}. The kernel of this surjective map is given to us by the following result.
\end{pa}

\begin{prop}[Digne--Lehrer--Michel, {\cite[Proposition 4.5]{bonnafe:2006:sln}}]\label{prop:dlm-centres}
Let $\mathbb{I} \subset \Delta$ be a set of simple roots and $\bL_{\mathbb{I}}$ the standard Levi subgroup corresponding to $\mathbb{I}$. The kernel of the map $Z(\bG) \to \mathcal{Z}(\bL_{\mathbb{I}})$ is the image of $\langle \widecheck{\varpi}_{\alpha} + \widecheck{X} \mid \alpha \in \Delta\setminus \mathbb{I}\rangle$ under the isomorphism $(\mathbb{Z}\widecheck{\Omega}/\widecheck{X})_{p'} \cong Z(\bG)$ of \cref{lem:centre-fund-grp}.
\end{prop}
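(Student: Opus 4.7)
The plan is to transport both sides of the claimed identity into the lattice $\mathbb{Z}\widecheck{\Omega}/\widecheck{X}$ using \cref{lem:centre-fund-grp} and then verify the equality via the perfect pairing $\langle-,-\rangle\colon X\times\widecheck{X}\to\mathbb{Z}$. Under the identification of \cref{lem:centre-fund-grp} the class $\widecheck{\varpi}+\widecheck{X}$ corresponds, via $\bT_0 = \Hom(X,\mathbb{K}^{\times})$, to the character $\chi\mapsto \imath(\langle\chi,\widecheck{\varpi}\rangle \bmod \mathbb{Z})$. Since $\ker(Z(\bG)\to\mathcal{Z}(\bL_{\mathbb{I}})) = Z(\bG)\cap Z(\bL_{\mathbb{I}})^{\circ}$ and $Z(\bL_{\mathbb{I}})^{\circ}$ is the subtorus of $\bT_0$ with cocharacter lattice $\widecheck{X}^{\mathbb{I}} := \{\widecheck{\mu}\in\widecheck{X}\mid \langle\alpha,\widecheck{\mu}\rangle = 0 \text{ for all }\alpha\in\mathbb{I}\}$, the perfectness of $\langle-,-\rangle$ gives $(\widecheck{X}^{\mathbb{I}})^{\perp}\cap X = \mathbb{Q}\mathbb{I}\cap X$, so an element $t\in\bT_0$ belongs to $Z(\bL_{\mathbb{I}})^{\circ}$ if and only if $\chi(t)=1$ for every $\chi\in\mathbb{Q}\mathbb{I}\cap X$.

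The statement thus reduces to the lattice assertion: for $\widecheck{\varpi}\in\mathbb{Z}\widecheck{\Omega}$, one has $\langle\chi,\widecheck{\varpi}\rangle\in\mathbb{Z}$ for all $\chi\in\mathbb{Q}\mathbb{I}\cap X$ if and only if $\widecheck{\varpi}\in \widecheck{X} + \sum_{\alpha\in\Delta\setminus\mathbb{I}} \mathbb{Z}\widecheck{\varpi}_{\alpha}$. Decomposing $\widecheck{\varpi} = \widecheck{\varpi}^{\mathbb{I}} + \widecheck{\varpi}^{\mathbb{I}^{c}}$ along the $\mathbb{Q}$-splitting $\mathbb{Q}\widecheck{X} = \mathbb{Q}\widecheck{\Omega}_{\mathbb{I}} \oplus \mathbb{Q}\widecheck{\Omega}_{\mathbb{I}^{c}}$ and using $\langle\alpha,\widecheck{\varpi}_{\gamma}\rangle = \delta_{\alpha\gamma}$, one sees that $\widecheck{\varpi}^{\mathbb{I}^{c}}$ pairs trivially with every $\chi\in\mathbb{Q}\mathbb{I}$, so the condition depends only on $\widecheck{\varpi}^{\mathbb{I}}$. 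The direction ($\Leftarrow$) is then immediate, since elements of $\widecheck{X}$ pair integrally with all of $X$ and each $\widecheck{\varpi}_{\alpha}$ with $\alpha\in\Delta\setminus\mathbb{I}$ pairs to zero with $\mathbb{Q}\mathbb{I}$.

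For the substantive direction ($\Rightarrow$), the key step is to show that the $\mathbb{I}$-projection $\pi\colon \widecheck{X}\to \mathbb{Z}\widecheck{\Omega}_{\mathbb{I}}$, $\widecheck{\mu}\mapsto \sum_{\alpha\in\mathbb{I}} \langle\alpha,\widecheck{\mu}\rangle\widecheck{\varpi}_{\alpha}$, has image precisely $\{\widecheck{\nu}\in \mathbb{Z}\widecheck{\Omega}_{\mathbb{I}}\mid \langle\chi,\widecheck{\nu}\rangle\in\mathbb{Z}\text{ for all } \chi\in\mathbb{Q}\mathbb{I}\cap X\}$. The kernel of $\pi$ is $\widecheck{X}^{\mathbb{I}}$ by construction; since $\mathbb{Q}\mathbb{I}\cap X$ is saturated in $X$, the perfect pairing restricts to an isomorphism $\widecheck{X}/\widecheck{X}^{\mathbb{I}}\overset{\sim}{\longrightarrow}\Hom(\mathbb{Q}\mathbb{I}\cap X,\mathbb{Z})$, and further restriction along $\mathbb{Z}\mathbb{I}\hookrightarrow\mathbb{Q}\mathbb{I}\cap X$ identifies this image inside $\Hom(\mathbb{Z}\mathbb{I},\mathbb{Z}) \cong \mathbb{Z}\widecheck{\Omega}_{\mathbb{I}}$ with the asserted dual lattice. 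Granting this, one picks $\widecheck{\mu}\in\widecheck{X}$ with $\pi(\widecheck{\mu}) = \widecheck{\varpi}^{\mathbb{I}}$; then $\widecheck{\varpi} - \widecheck{\mu}$ lies in $\mathbb{Z}\widecheck{\Omega}$ with trivial $\mathbb{I}$-component, hence in $\sum_{\alpha\in\Delta\setminus\mathbb{I}}\mathbb{Z}\widecheck{\varpi}_{\alpha}$, completing the proof. The main obstacle is the duality step, which hinges on the torsion-freeness of $X/(\mathbb{Q}\mathbb{I}\cap X)$; everything else amounts to routine manipulation of $\widecheck{\Omega}$-coordinates.
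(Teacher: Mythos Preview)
The paper does not supply a proof of this proposition: it is quoted as a result of Digne--Lehrer--Michel, with a reference to \cite[Proposition 4.5]{bonnafe:2006:sln}, and is used as a black box in the computation of $|\mathcal{Z}(\bL_{\eta}^{\pm})|$ that follows. So there is nothing in the paper to compare your argument against.

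That said, your argument is correct and is essentially the standard one. The identification $\ker\bigl(Z(\bG)\to\mathcal{Z}(\bL_{\mathbb{I}})\bigr)=Z(\bG)\cap Z(\bL_{\mathbb{I}})^{\circ}$ is immediate, and the description of $Z(\bL_{\mathbb{I}})^{\circ}$ via the saturated sublattice $\mathbb{Q}\mathbb{I}\cap X$ is the usual characterisation of a subtorus by the characters vanishing on it. The key lattice step, namely that the image of the projection $\pi\colon\widecheck{X}\to\mathbb{Z}\widecheck{\Omega}_{\mathbb{I}}$ is exactly the dual lattice of $\mathbb{Q}\mathbb{I}\cap X$ inside $\mathbb{Z}\widecheck{\Omega}_{\mathbb{I}}$, is handled cleanly: saturation of $\mathbb{Q}\mathbb{I}\cap X$ in $X$ gives the perfect pairing $(\mathbb{Q}\mathbb{I}\cap X)\times(\widecheck{X}/\widecheck{X}^{\mathbb{I}})\to\mathbb{Z}$, and restriction to $\mathbb{Z}\mathbb{I}$ embeds this in $\mathbb{Z}\widecheck{\Omega}_{\mathbb{I}}$ compatibly with $\pi$. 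The final subtraction $\widecheck{\varpi}-\widecheck{\mu}$ then lands in $\mathbb{Z}\widecheck{\Omega}_{\mathbb{I}^c}$ as required. One small point worth making explicit is that the passage from ``$\chi(t)=1$'' to ``$\langle\chi,\widecheck{\varpi}\rangle\in\mathbb{Z}$'' uses that the order of $\widecheck{\varpi}+\widecheck{X}$ in $\mathbb{Z}\widecheck{\Omega}/\widecheck{X}$ is prime to $p$ (so no information is lost under $\imath$); this is implicit in the $p'$-subscript in \cref{lem:centre-fund-grp}.
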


\begin{pa}
Recall from \cref{sec:clarification-typeD} that the image of $\widecheck{X}$ in $\widecheck{\Pi}$ depends upon the congruence of $n \pmod{4}$. We treat the two cases separately.
\begin{itemize}
	\item $n \equiv 0 \pmod{4}$ then $\widecheck{X} = \langle \widecheck{\varpi}_n + \mathbb{Z}\widecheck{\Phi} \rangle$. By \cref{prop:dlm-centres} we have the kernel of the map $Z(\bG) \to \mathcal{Z}(\bL_{\eta}^{\pm})$ is non-trivial whenever $\alpha_{n-1}$ is \emph{not} in the root system of the Levi. If the kernel is non-trivial then the order of $\mathcal{Z}(\bL_{\eta}^{\pm})$ is 1. Hence we have $|\mathcal{Z}(\bL_{\eta}^+)| = 2$ and $|\mathcal{Z}(\bL_{\eta}^-)|=1$.
	\item $n \equiv 2 \pmod{4}$ then $\widecheck{X} = \langle \widecheck{\varpi}_{n-1} + \mathbb{Z}\widecheck{\Phi} \rangle$. By \cref{prop:dlm-centres} we have the kernel of the map $Z(\bG) \to \mathcal{Z}(\bL_{\eta}^{\pm})$ is non-trivial whenever $\alpha_n$ is \emph{not} in the root system of the Levi. If the kernel is non-trivial then the order of $\mathcal{Z}(\bL_{\eta}^{\pm})$ is 1. Hence we have $|\mathcal{Z}(\bL_{\eta}^+)| = 1$ and $|\mathcal{Z}(\bL_{\eta}^-)|=2$.
\end{itemize}
This now verifies the statements regarding the component group orders.
\end{pa}

\section{Quasi-Isolated Semisimple Elements}\label{sec:quasi-isolated}
The results we prove in this section will be stated in terms of $\bG$, for notational convenience, but they will be applied to the dual group. The exception to this will be in \Crefrange{sec:comp-grp-s/s-elmt}{subsec:groups-type-Dn} and \cref{sec:fixing-semisimple-class-reps} where we prove results concerning the relationship between $\bG$ and $\bG^{\star}$.

\subsection{\texorpdfstring{Bonnaf\'{e}'s Classification}{Bonnafe's Classification}}
\begin{pa}
We start by describing Bonnaf\'{e}'s classification of quasi-isolated semisimple elements, (see \cite{bonnafe:2005:quasi-isolated}). Assume $\bT \leqslant \bG$ is a maximal torus and recall that we have an isomorphism $\mathbb{K}^{\times} \otimes_{\mathbb{Z}} \widecheck{X}(\bT) \to \bT$ given by $k\otimes \gamma \mapsto \gamma(k)$. Using the isomorphism $\imath : (\mathbb{Q/Z})_{p'} \to \mathbb{K}^{\times}$ we obtain an isomorphism $\imath_{\bT} : (\mathbb{Q/Z})_{p'} \otimes_{\mathbb{Z}} \widecheck{X}(\bT) \to \bT$\index{iT@$\imath_{\bT}$} given by $\imath_{\bT}(r\otimes\gamma) = \gamma(\imath(r))$. We have an action of $F$ on $(\mathbb{Q/Z})_{p'} \otimes_{\mathbb{Z}} \widecheck{X}(\bT)$ given by $F(r\otimes\gamma) = r\otimes F(\gamma)$ which is compatible with the action of $F$ on $\bT$.

As $\bG_{\ad}$ is adjoint the cocharacter group $\widecheck{X}(\bT_{\ad})$ can be identified with the coweight lattice, which means we can naturally consider all fundamental dominant coweights $\widecheck{\varpi}_{\alpha} \in \widecheck{\Omega}$ to be elements of $\widecheck{X}(\bT_{\ad})$. Let $\mathcal{A} := \Aut_{\bW}(\tilde{\Delta}) = \{x \in \bW \mid x(\tilde{\Delta}) = \tilde{\Delta}\} \leqslant \bW$\index{A@$\mathcal{A}$} be the automorphism group of the extended Dynkin diagram in $\bW$ and let $\mathcal{Q}(\bG_{\ad})$\index{QGad@$\mathcal{Q}(\bG_{\ad})$} denote the set of subsets $\Sigma \subset \tilde{\Delta}$ such that the stabiliser of $\Sigma$ in $\mathcal{A}$ acts transitively on $\Sigma$. We then have the following theorem of Bonnaf\'{e}, (recall that we assume here that $p$ is a good prime for $\bG$).
\end{pa}

\begin{thm}[Bonnaf\'{e}, {\cite[Theorem 5.1]{bonnafe:2005:quasi-isolated}}]\label{thm:bonnafe}
Let $\Sigma \in \mathcal{Q}(\bG_{\ad})$ and define an element $t_{\Sigma} \in \bT_{\ad}$ by setting
\begin{equation*}
t_{\Sigma} = \imath_{\bT_{\ad}}\left(\sum_{\alpha \in \Sigma} \frac{1}{m_{\alpha}|\Sigma|}\otimes\widecheck{\varpi}_{\alpha}\right)\index{tsigma@$t_{\Sigma}$},
\end{equation*}
where $\widecheck{\varpi}_{\alpha} \in \widecheck{\Omega}$, (and $m_{\alpha}$ is as in \cref{pa:root-data}). The following then hold:
\begin{itemize}
	\item the map $\Sigma \mapsto t_{\Sigma}$ induces a bijection between the set of orbits of $\mathcal{A}$ acting on $\mathcal{Q}(\bG_{\ad})$ and the set of conjugacy classes of quasi-isolated semisimple elements in $\bG_{\ad}$.
	\item for any $\Sigma \in \mathcal{Q}(\bG_{\ad})$ we have:
	\begin{itemize}
		\item $\bW(t_{\Sigma})^{\circ} = \langle s_{\alpha} \in \mathbb{S}_0 \mid \alpha \in \tilde{\Delta} - \Sigma \rangle$;
		\item $A_{\bG_{\ad}}(t_{\Sigma}) = \{x\bW(t_{\Sigma})^{\circ} \mid x \in \mathcal{A}$ and $x(\Sigma) = \Sigma\}$.
	\end{itemize}
\end{itemize}
\end{thm}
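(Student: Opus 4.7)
The plan is to reduce the classification of quasi-isolated semisimple elements in $\bG_{\ad}$ to combinatorics on the extended Dynkin diagram via the affine alcove description of semisimple conjugacy classes. Working through the isomorphism $\imath_{\bT_{\ad}}$, every semisimple element of $\bG_{\ad}$ is $\bG_{\ad}$-conjugate to a unique element of alcove form $\imath_{\bT_{\ad}}\bigl(\sum_{\alpha \in \tilde{\Delta}} c_\alpha \otimes \widecheck{\varpi}_\alpha\bigr)$ with $c_\alpha \in [0,1) \cap (\mathbb{Q}/\mathbb{Z})_{p'}$ subject to the normalisation $\sum_{\alpha \in \tilde{\Delta}} m_\alpha c_\alpha = 1$ (recalling $\widecheck{\varpi}_{\alpha_0} = 0$, $m_{\alpha_0} = 1$), modulo the residual action of $\mathcal{A}$, which in this picture corresponds to $\widecheck{\Pi} = \widecheck{X}(\bT_{\ad})/\mathbb{Z}\widecheck{\Phi}$ via \cref{lem:centre-fund-grp}. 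The coordinates $c_\alpha = 1/(m_\alpha|\Sigma|)$ for $\alpha \in \Sigma$ and $c_\alpha = 0$ otherwise indeed satisfy $\sum m_\alpha c_\alpha = 1$, so $t_\Sigma$ fits this canonical form, and the map $\Sigma \mapsto t_\Sigma$ will descend to $\mathcal{A}$-orbits.

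To identify $\bW(t_\Sigma)^\circ$, I would compute $\alpha(t_\Sigma)$ for each $\alpha \in \tilde{\Delta}$. Using $\langle\beta,\widecheck{\varpi}_\gamma\rangle = \delta_{\beta\gamma}$ for $\beta,\gamma \in \Delta$ one gets $\beta(t_\Sigma) = \imath(1/(m_\beta|\Sigma|))$ when $\beta \in \Sigma$ and $\beta(t_\Sigma)=1$ otherwise; combining with $\alpha_0 = -\sum_\beta m_\beta \beta$ yields the same dichotomy for $\alpha_0$, so $\alpha(t_\Sigma) = 1$ exactly on $\tilde{\Delta}\setminus\Sigma$. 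Since $t_\Sigma$ lies in the closure of the standard fundamental alcove, the affine analogue of Borel--de Siebenthal then identifies $\bW(t_\Sigma)^\circ$ with the subgroup generated by those reflections $s_\alpha$, $\alpha \in \tilde{\Delta}$, with $\alpha(t_\Sigma)=1$, yielding the desired description $\bW(t_\Sigma)^\circ = \langle s_\alpha \mid \alpha \in \tilde{\Delta}\setminus\Sigma\rangle$.

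For $A_{\bG_{\ad}}(t_\Sigma) = \bW(t_\Sigma)/\bW(t_\Sigma)^\circ$, any $w \in \bW$ fixing $t_\Sigma$ must permute the extended simple roots on which $t_\Sigma$ vanishes, so it must preserve both $\tilde{\Delta}\setminus\Sigma$ and $\Sigma$; modulo $\bW(t_\Sigma)^\circ$ it is represented by some $x \in \mathcal{A}$ with $x(\Sigma)=\Sigma$, and conversely any such $x$ fixes $t_\Sigma$ because $\mathcal{A}$ preserves the labels $m_\alpha$. The quasi-isolation condition translates to the requirement that $\bW(t_\Sigma)$ not be contained in a proper standard parabolic subgroup of $\bW$; since $\bW(t_\Sigma)^\circ$ is already standard parabolic, this reduces to whether $\mathrm{Stab}_\mathcal{A}(\Sigma)$ acts transitively on $\Sigma$, which is exactly the defining property of $\mathcal{Q}(\bG_{\ad})$. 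The main obstacle lies in this last equivalence: one must show rigorously that non-transitivity of $\mathrm{Stab}_\mathcal{A}(\Sigma)$ on $\Sigma$ produces an $\mathcal{A}$-stable strict enlargement of $\tilde{\Delta}\setminus\Sigma$ whose associated Levi subgroup contains $C_{\bG_{\ad}}(t_\Sigma)$, which requires a careful interplay between $\mathcal{A}$-orbits and parabolic subgroups of $\bW$. The bijection between $\mathcal{A}$-orbits on $\mathcal{Q}(\bG_{\ad})$ and conjugacy classes of quasi-isolated elements then follows from uniqueness of the alcove representative modulo $\mathcal{A}$.
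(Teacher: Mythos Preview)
The paper does not prove this theorem; it is quoted verbatim from Bonnaf\'{e} \cite[Theorem 5.1]{bonnafe:2005:quasi-isolated} and used as a black box for the subsequent case analysis. There is therefore no proof in the paper against which to compare your proposal.

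That said, your sketch is broadly in line with the strategy Bonnaf\'{e} himself uses: semisimple classes of $\bG_{\ad}$ are parameterised by points of the fundamental alcove modulo the action of $\mathcal{A}$, the connected centraliser is read off from which walls of the alcove the point lies on, and the component group comes from the stabiliser of the point in $\mathcal{A}$. Your computation of $\alpha(t_\Sigma)$ for $\alpha\in\tilde{\Delta}$ is correct, and you are right that the delicate step is the equivalence between quasi-isolation and transitivity of $\mathrm{Stab}_{\mathcal{A}}(\Sigma)$ on $\Sigma$. One point worth tightening: knowing $\alpha(t_\Sigma)=1$ precisely for $\alpha\in\tilde{\Delta}\setminus\Sigma$ does not by itself determine $\bW(t_\Sigma)^\circ$; you need the alcove argument to rule out further roots $\alpha\in\Phi$ with $\alpha(t_\Sigma)=1$, which is where the normalisation $\sum_\alpha m_\alpha c_\alpha=1$ is really used. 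For the full details and the bijection statement you should consult Bonnaf\'{e}'s paper directly.
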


\begin{rem}
In the statement of the above theorem we have identified $A_{\bG}(s)$ and $\bW(s)/\bW(s)^{\circ}$ under the usual natural isomorphism between these two groups, (see for instance \cite[Proposition 1.3(d)]{bonnafe:2005:quasi-isolated}). We will maintain this identification throughout.
\end{rem}

\begin{pa}
An important aspect of Bonnaf\'{e}'s theorem is that he determines the structure of $A_{\bG}(t_{\Sigma})$, which is important to us in verifying the validity of \cref{P2}. In \cref{tab:quasi-isolated-classical,tab:quasi-isolated-exceptional} we reproduce Bonnaf\'{e}'s classification of quasi-isolated semisimple elements in classical and exceptional adjoint algebraic groups, as found in \cite[Tables 2 and 3]{bonnafe:2005:quasi-isolated}. In the case of $\G_2$, $\F_4$ and $\E_8$ the notion of isolated and quasi-isolated semisimple elements coincide as the adjoint and simply connected groups coincide. Note that in the original table of Bonnaf\'{e} the class representative for the class corresponding to $\{\alpha_{n/2}\}$ in $\D_n$ is denoted as having $m_{\alpha}|\Sigma| = 4$. However it is clear that this element has $m_{\alpha}|\Sigma| = 2$ as it is isolated, (see \cite[Proposition 5.5]{bonnafe:2005:quasi-isolated}).
\end{pa}

\begin{table}[t]
\centering
\begin{tabular}{>{$}c<{$}>{$}l<{$}>{$}c<{$}>{$}c<{$}>{$}c<{$}cc}
\toprule
\bG_{\ad} & \multicolumn{1}{c}{$\Sigma$} & m_{\alpha}|\Sigma| & C_{\bG_{\ad}}(t_{\Sigma})^{\circ} & |A_{\bG_{\ad}}(t_{\Sigma})| & Isolated? \tabularnewline
\midrule
\A_n
&
\underset{d \mid n+1\text{ and }p \nmid d}{\{\alpha_{j(n+1)/d} \mid 0 \leqslant j \leqslant d-1\}}
&
d
&
(\A_{(n+1-d)/d})^d
&
d
&
$d=1$
\\
\midrule
\multirow{3}{*}{$\B_n$}
&
\{\alpha_0\}
&
1
&
\B_n
&
1
&
yes
\\
&
\{\alpha_0,\alpha_1\}
&
2
&
\B_{n-1}
&
2
&
no
\\
&
\{\alpha_d\}, d \in [2,n]
&
2
&
\D_d\B_{n-d}
&
2
&
yes
\\
\midrule
\multirow{5}{*}{$\C_n$}
&
\{\alpha_0\}
&
1
&
\C_n
&
1
&
yes
\\
&
\{\alpha_d\},\, d \in [1,n-1]\setminus\{n/2\}
&
2
&
\C_d\C_{n-d}
&
1
&
yes
\\
&
\{\alpha_{n/2}\},\text{ (only if }2\mid n\text{)}
&
2
&
\C_{n/2}\C_{n/2}
&
2
&
yes
\\
&
\{\alpha_0,\alpha_n\}
&
2
&
\A_{n-1}
&
2
&
no
\\
&
\{\alpha_d,\alpha_{n-d}\},\, 1 \leqslant d < n/2
&
4
&
\C_d\A_{n-2d-1}\C_d
&
2
&
no
\\
\midrule
\multirow{8}{*}{$\D_n$}
&
\{\alpha_0\}
&
1
&
\D_n
&
1
&
yes
\\
&
\{\alpha_d\},\, d \in [2,n-2]\setminus\{n/2\}
&
2
&
\D_d\D_{n-d}
&
2
&
yes
\\
&
\{\alpha_{n/2}\}\text{ (only if }2\mid n\text{)}
&
2
&
\D_{n/2}\D_{n/2}
&
4
&
yes
\\
&
\{\alpha_d,\alpha_{n-d}\},\, d \in [2,n-2]\setminus\{n/2\}
&
4
&
\D_d\A_{n-2d-1}\D_d
&
4
&
no
\\
&
\{\alpha_0,\alpha_1,\alpha_{n-1},\alpha_n\}
&
4
&
\A_{n-3}
&
4
&
no
\\
&
\{\alpha_0,\alpha_1\}
&
2
&
\D_{n-1}
&
2
&
no
\\
&
\{\alpha_0,\alpha_{n-1}\}\text{, (only if }2\mid n\text{)}
&
2
&
\A_{n-1}
&
2
&
no
\\
&
\{\alpha_0,\alpha_n\}\text{, (only if }2\mid n\text{)}
&
2
&
\A_{n-1}
&
2
&
no
\\
\bottomrule
\end{tabular}
\caption{Classes of Quasi-Isolated Semisimple Elements in Classical Groups}
\label{tab:quasi-isolated-classical}
\end{table}
\begin{table}[t]
\centering
\begin{tabular}{>{$}c<{$}>{$}l<{$}>{$}c<{$}>{$}c<{$}c}
\toprule
\bG_{\ad} & \multicolumn{1}{c}{$\Sigma$} & C_{\bG_{\ad}}(t_{\Sigma})^{\circ} & |A_{\bG}(t_{\Sigma})| & Isolated? \tabularnewline
\midrule
\multirow{3}{*}{$\G_2$}
&
\{\alpha_0\}
&
\G_2
&
1
&
yes
\\
&
\{\alpha_1\}
&
\A_1\A_1
&
1
&
yes
\\
&
\{\alpha_2\}
&
\A_2
&
1
&
yes
\\
\midrule
\multirow{5}{*}{$\F_4$}
&
\{\alpha_0\}
&
\F_4
&
1
&
yes
\\
&
\{\alpha_1\}
&
\A_1\C_3
&
1
&
yes
\\
&
\{\alpha_2\}
&
\A_2\A_2
&
1
&
yes
\\
&
\{\alpha_3\}
&
\A_3\A_1
&
1
&
yes
\\
&
\{\alpha_4\}
&
\B_4
&
1
&
yes
\\
\midrule
\multirow{5}{*}{$\E_6$}
&
\{\alpha_0\}
&
\E_6
&
1
&
yes
\\
&
\{\alpha_2\}
&
\A_5\A_1
&
1
&
yes
\\
&
\{\alpha_4\}
&
\A_2\A_2\A_2
&
3
&
yes
\\
&
\{\alpha_0,\alpha_1,\alpha_6\}
&
\D_4
&
3
&
no
\\
&
\{\alpha_2,\alpha_3,\alpha_5\}
&
\A_1\A_1\A_1\A_1
&
3
&
no
\\
\midrule
\multirow{8}{*}{$\E_7$}
&
\{\alpha_0\}
&
\E_7
&
1
&
yes
\\
&
\{\alpha_1\}
&
\A_1\D_6
&
1
&
yes
\\
&
\{\alpha_2\}
&
\A_7
&
2
&
yes
\\
&
\{\alpha_3\}
&
\A_2\A_5
&
1
&
yes
\\
&
\{\alpha_4\}
&
\A_3\A_3\A_1
&
2
&
yes
\\
&
\{\alpha_0,\alpha_7\}
&
\E_6
&
2
&
no
\\
&
\{\alpha_1,\alpha_6\}
&
\D_4\A_1\A_1
&
2
&
no
\\
&
\{\alpha_3,\alpha_5\}
&
\A_2\A_2\A_2
&
2
&
no
\\
\midrule
\multirow{9}{*}{$\E_8$}
&
\{\alpha_0\}
&
\E_8
&
1
&
yes
\\
&
\{\alpha_1\}
&
\D_8
&
1
&
yes
\\
&
\{\alpha_2\}
&
\A_8
&
1
&
yes
\\
&
\{\alpha_3\}
&
\A_1\A_7
&
1
&
yes
\\
&
\{\alpha_4\}
&
\A_2\A_1\A_5
&
1
&
yes
\\
&
\{\alpha_5\}
&
\A_4\A_4
&
1
&
yes
\\
&
\{\alpha_6\}
&
\D_5\A_3
&
1
&
yes
\\
&
\{\alpha_7\}
&
\E_6\A_2
&
1
&
yes
\\
&
\{\alpha_8\}
&
\E_7\A_1
&
1
&
yes
\\
\bottomrule
\end{tabular}
\caption{Classes of Quasi-Isolated Semisimple Elements in Exceptional Groups}
\label{tab:quasi-isolated-exceptional}
\end{table}

\subsection{\texorpdfstring{The Group $\mathcal{A}$}{The Group A}}
\begin{pa}
We will need to know explicitly the exact structure and actions of the group $A_{\bG}(t_{\Sigma})$. To do this we will need to describe explicitly the group $\mathcal{A}$. If $\bG$ is not of type $\D_n$ then $\mathcal{A}$ is cyclic and is described in \cite[Plates I-IX(XII)]{bourbaki:2002:lie-groups-chap-4-6}. To fix the notation in the case of type $\D_n$ we recall the description of $\mathcal{A}$ from \cite[Plate IV(XII)]{bourbaki:2002:lie-groups-chap-4-6}. We denote the elements of $\mathcal{A}$ by the set $\{1,\sigma_1,\sigma_{n-1},\sigma_n\}$\index{sigmai@$\sigma_i$}. If $n \equiv 0 \pmod{2}$ then the element $\sigma_{n-1}$ acts by exchanging the elements in the sets $\{\alpha_0,\alpha_{n-1}\}$, $\{\alpha_1,\alpha_n\}$, $\{\alpha_j,\alpha_{n-j}\}$ for each $2 \leqslant j \leqslant n-2$. The element $\sigma_n$ acts by exchanging $\alpha_j$ with $\alpha_{n-j}$ for all $0 \leqslant j \leqslant n$. Furthermore $\mathcal{A}$ is generated by $\sigma_{n-1}$ and $\sigma_n$. If $n \equiv 1 \pmod{2}$ then the element $\sigma_n$ acts by mapping $\alpha_0 \mapsto \alpha_n \mapsto \alpha_1 \mapsto \alpha_{n-1} \mapsto \alpha_0$ and exchanges $\alpha_j$ with $\alpha_{n-j}$ for $2 \leqslant j \leqslant n-2$. Furthermore $\mathcal{A}$ is generated by $\sigma_n$. The element $\sigma_1$ always acts by exchanging the elements in the sets $\{\alpha_0,\alpha_1\}$, $\{\alpha_{n-1},\alpha_n\}$ and fixes $\alpha_j$ for all $2 \leqslant j \leqslant n-2$.
\end{pa}

\begin{pa}\label{pa:iso-A-Z(G)}
Recall from \cite[3.7]{bonnafe:2005:quasi-isolated} that we have an isomorphism $\mathcal{A} \to \widecheck{\Pi}$. If $\bG$ is simply connected and $\widecheck{\Pi}_{p'} = \widecheck{\Pi}$, (i.e.\ $p$ is a very good prime for $\bG$), then by \cref{lem:centre-fund-grp} we also have an isomorphism $\widecheck{\Pi} \to Z(\bG)$. By composing these isomorphisms we have an isomorphism $\mathcal{A} \to Z(\bG)$. We wish to describe this isomorphism in the case where $\bG$ is of type $\D$. We describe the isomorphism $\mathcal{A} \to \widecheck{\Pi}$ following \cite[\S 3.B]{bonnafe:2005:quasi-isolated}. Let $\alpha_j \in \tilde{\Delta}$ be a root for some $j$ then we denote by $\Delta_j$ the set $\Delta\setminus\{\alpha_j\}$. We write $\Phi_j \subseteq \Phi$ for the parabolic subsystem generated by the set $\Delta_j$ and $\bW_j = \langle s_{\alpha} \mid \alpha\in\Delta_j \rangle$ the corresponding parabolic subgroup of $\bW$. Let $\Phi_j^+ = \Phi_j \cap \Phi^+$ be a system of positive roots for $\Phi_j$ then we denote by $w_j \in \bW_j$ the unique element such that $w_j(\Phi_j^+) = -\Phi_j^+$, (i.e.\ the longest word in $\bW_j$). Define $x_j = w_jw_0 \in \bW$ then $\mathcal{A} = \{x_j \mid m_{\alpha_j} = 1\}$ by \cite[\S 3.5]{bonnafe:2005:quasi-isolated}. The isomorphism $\mathcal{A} \to \widecheck{\Pi}$ is then given by $x_j \mapsto \widecheck{\varpi}_j + \mathbb{Z}\widecheck{\Phi}$.

We consider what this means for a simply connected group of type $\D_n$. In this case we have $\mathcal{A} = \{x_0,x_1,x_{n-1},x_n\}$, (see \cite[Table 1]{bonnafe:2005:quasi-isolated}). It is clear from the description that $x_0$ is the identity. If $j$ is $n-1$ or $n$ then it is easy to determine the action of $x_j$ because the longest word in $\bW_j$ will induce the unique non-trivial graph automorphism on the root system $\Delta_j$ of type $\A_{n-1}$. Furthermore the longest element $w_0 \in \bW$ will induce no graph automorphism if $n$ is even and the unique graph automorphism of order 2 if $n$ is odd. Comparing with \cref{sec:clarification-typeD} we see that we have chosen the labelling such that $\sigma_j \mapsto \hat{z}_j$, (for $j \in \{1,n-1,n\}$), under the isomorphism $\mathcal{A} \to Z(\bG)$.
\end{pa}

\subsection{Component Groups of Semisimple Elements}
\begin{pa}\label{sec:comp-grp-s/s-elmt}
In this section we will be interested in $|A_{\bG^{\star}}(s)|$ where $s \in \bT_0^{\star}$ is a semisimple element such that $\bar{s} = \delta_{\ad}^{\star}(s) \in \bT_{\simc}^{\star}$ is quasi-isolated. An expression for this is already obtained by Bonnaf\'{e} in \cite[Proposition 3.14(b)]{bonnafe:2005:quasi-isolated}, however we wish to determine a slightly different description which will make numerical comparisons between component groups of unipotent elements in $\bG$ simpler. If $\bG^{\star}$ is simply connected then we know that the centraliser of every semisimple element is connected, (by the classical work of Steinberg), so this value will always be 1. It suffices therefore to only consider simple groups which are neither simply connected nor adjoint, (as the adjoint case is dealt with in \cref{tab:quasi-isolated-classical,tab:quasi-isolated-exceptional}). These groups can only occur in types $\A_n$ and $\D_n$ so we may assume that $\bG$ is such a simple group.
\end{pa}

Let $\hat{s} \in \bT_{\ad}^{\star}$ be such that $\delta_{\simc}^{\star}(\hat{s}) = s \in \bT_0^{\star}$. Following Bonnaf\'{e} \cite[\S 2.B]{bonnafe:2005:quasi-isolated} we define two homomorphisms $\omega_{\bar{s}} : C_{\bG_{\simc}^{\star}}(\bar{s}) \to Z(\bG_{\ad}^{\star})$ and $\omega_s : C_{\bG^{\star}}(s) \to Z(\bG_{\ad}^{\star})$\index{omegas@$\omega_s$} by setting $\omega_{\bar{s}}(\bar{x}) = [\hat{s},\hat{x}]$ and $\omega_s(y) = [\hat{s},\hat{y}]$, where $\hat{x}$, $\hat{y}$ are such that $(\delta_{\ad}^{\star}\circ\delta_{\simc}^{\star})(\hat{x}) = \bar{x}$ and $\delta_{\simc}^{\star}(\hat{y}) = y$. We recall the following result of Bonnaf\'{e}.
\begin{lem}[Bonnaf\'{e}, {\cite[Corollary 2.8]{bonnafe:2005:quasi-isolated}}]\label{lem:bonn-centraliser-hom}
The homomorphisms $\omega_{\bar{s}}$, $\omega_s$ induce embeddings $\tilde{\omega}_{\bar{s}} : A_{\bG_{\simc}^{\star}}(\bar{s}) \to Z(\bG_{\ad}^{\star})$ and $\tilde{\omega}_s : A_{\bG^{\star}}(s) \to Z(\bG_{\ad}^{\star})$\index{omegastilde@$\tilde{\omega}_s$}. Their respective images are given by
\begin{align*}
\Image(\tilde{\omega}_{\bar{s}}) &= \{\hat{z} \in Z(\bG_{\ad}^{\star}) \mid \hat{s}\text{ and }\hat{s}\hat{z}\text{ are conjugate in }\bG_{\ad}^{\star}\},\\
\Image(\tilde{\omega}_s) &= \{\hat{z} \in \Ker(\delta_{\simc}^{\star}) \mid \hat{s}\text{ and }\hat{s}\hat{z}\text{ are conjugate in }\bG_{\ad}^{\star}\}.
\end{align*}
\end{lem}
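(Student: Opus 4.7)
The plan is to follow the standard argument for commutator maps attached to central isogenies, carried out in three steps.

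First, I would check that $\omega_s$ and $\omega_{\bar{s}}$ are well-defined group homomorphisms with target the claimed central subgroups. If $\hat{y}$ lifts $y \in C_{\bG^{\star}}(s)$, then applying $\delta_{\simc}^{\star}$ gives $\delta_{\simc}^{\star}([\hat{s},\hat{y}]) = [s,y] = 1$, so $[\hat{s},\hat{y}] \in \ker(\delta_{\simc}^{\star})$; in particular it is central in $\bG_{\ad}^{\star}$. Changing the lift to $\hat{y}\hat{z}_0$ with $\hat{z}_0$ central leaves the commutator unchanged, so $\omega_s$ is well defined, and the standard identity $[\hat{s},\hat{y}_1\hat{y}_2] = [\hat{s},\hat{y}_1] \cdot {}^{\hat{y}_1}[\hat{s},\hat{y}_2]$ together with the centrality of $[\hat{s},\hat{y}_2]$ yields the homomorphism property. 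The argument for $\omega_{\bar{s}}$ is formally identical, using the composite isogeny $\delta_{\ad}^{\star}\circ\delta_{\simc}^{\star}:\bG_{\ad}^{\star}\to\bG_{\simc}^{\star}$, whose kernel is the full centre $Z(\bG_{\ad}^{\star})$.

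Second, I would identify the kernels, which is where the genuine geometric input enters. By construction $\omega_s(y)=1$ exactly when some (equivalently any) lift $\hat{y}$ centralises $\hat{s}$, so $\ker(\omega_s) = \delta_{\simc}^{\star}(C_{\bG_{\ad}^{\star}}(\hat{s}))$. Since $\bG_{\ad}^{\star}$ is simply connected (being the simply connected cover of $\bG^{\star}$), Steinberg's theorem ensures that $C_{\bG_{\ad}^{\star}}(\hat{s})$ is connected, hence its image sits inside $C_{\bG^{\star}}(s)^{\circ}$. The reverse inclusion $C_{\bG^{\star}}(s)^{\circ} \subseteq \delta_{\simc}^{\star}(C_{\bG_{\ad}^{\star}}(\hat{s}))$ is the standard fact that identity components lift along a central isogeny; it follows by comparing dimensions and using the commutator calculation above. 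Therefore $\omega_s$ descends to an injection $\tilde{\omega}_s : A_{\bG^{\star}}(s) \hookrightarrow \ker(\delta_{\simc}^{\star}) \subseteq Z(\bG_{\ad}^{\star})$. The kernel of $\omega_{\bar{s}}$ is treated identically, now using $(\delta_{\ad}^{\star}\circ\delta_{\simc}^{\star})(C_{\bG_{\ad}^{\star}}(\hat{s})) = C_{\bG_{\simc}^{\star}}(\bar{s})^{\circ}$.

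Third, I would read off the images directly from the defining formula. If $\omega_s(y)=\hat{z}$, then $\hat{y}^{-1}\hat{s}\hat{y} = \hat{s}\hat{z}$, so $\hat{s}$ and $\hat{s}\hat{z}$ are $\bG_{\ad}^{\star}$-conjugate, giving the inclusion $\subseteq$. Conversely, given $\hat{z} \in \ker(\delta_{\simc}^{\star})$ and $\hat{y} \in \bG_{\ad}^{\star}$ with $\hat{y}^{-1}\hat{s}\hat{y} = \hat{s}\hat{z}$, we have $[\hat{s},\hat{y}]=\hat{z}$, and applying $\delta_{\simc}^{\star}$ shows $y := \delta_{\simc}^{\star}(\hat{y}) \in C_{\bG^{\star}}(s)$ with $\omega_s(y)=\hat{z}$. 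The image of $\tilde{\omega}_{\bar{s}}$ is obtained by the same argument with $Z(\bG_{\ad}^{\star})$ replacing $\ker(\delta_{\simc}^{\star})$.

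The only non-formal point is the reverse inclusion $C_{\bG^{\star}}(s)^{\circ} \subseteq \delta_{\simc}^{\star}(C_{\bG_{\ad}^{\star}}(\hat{s}))$ in the kernel computation; everything else is manipulation of commutators. This step is nevertheless a routine consequence of the general theory of central isogenies between connected reductive groups, so I do not expect a serious obstacle in practice.
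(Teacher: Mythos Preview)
Your argument is correct and is essentially the standard proof of this result. Note, however, that the paper does not give its own proof of this lemma: it is simply quoted from Bonnaf\'{e}'s paper \cite[Corollary 2.8]{bonnafe:2005:quasi-isolated}, so there is no ``paper's proof'' to compare against. Your three-step outline (well-definedness via centrality of the commutator, identification of the kernel using Steinberg's connectedness theorem in the simply connected group $\bG_{\ad}^{\star}$, and direct computation of the image) is exactly the argument Bonnaf\'{e} gives in the cited reference, and each step is sound.

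One small remark: in Step~2 you flag the inclusion $C_{\bG^{\star}}(s)^{\circ} \subseteq \delta_{\simc}^{\star}(C_{\bG_{\ad}^{\star}}(\hat{s}))$ as the only non-formal point. This is indeed routine, and the cleanest way to see it is to note that a central isogeny induces an isomorphism of Lie algebras, so $C_{\bG_{\ad}^{\star}}(\hat{s})$ and $C_{\bG^{\star}}(s)$ have the same dimension; since $\delta_{\simc}^{\star}$ restricted to $C_{\bG_{\ad}^{\star}}(\hat{s})$ has finite kernel and connected source (by Steinberg), its image is a closed connected subgroup of $C_{\bG^{\star}}(s)$ of full dimension, hence equals $C_{\bG^{\star}}(s)^{\circ}$. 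You might also remark that lifts along $\delta_{\simc}^{\star}$ and $\delta_{\ad}^{\star}\circ\delta_{\simc}^{\star}$ exist because $\mathbb{K}$ is algebraically closed, so these isogenies are surjective on $\mathbb{K}$-points.
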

It is easily checked that we have $\omega_{\bar{s}}\circ F_{\bar{s}}^{\star} = F^{\star}\circ\omega_{\bar{s}}$ and $\omega_s\circ F_s^{\star} = F^{\star}\circ\omega_s$. From this lemma we see that $A_{\bG_{\simc}^{\star}}(\bar{s}) \cong \Image(\tilde{\omega}_{\bar{s}})$ and $A_{\bG^{\star}}(s) \cong \Image(\tilde{\omega}_s)$ so to determine $|A_{\bG^{\star}}(s)|$ we need only determine $|\Image(\tilde{\omega}_{\bar{s}}) \cap \Ker(\delta_{\simc}^{\star})|$.

In later sections we will want to compare $|A_{\bG^{\star}}(s)|$ with $|A_{\bG}(u)|$ for some unipotent element $u \in \bG$. We now take the time to prove some small results which will facilitate this.

\begin{lem}\label{lem:Z(G)-ker}
The groups $\Ker(\delta_{\simc})$ and $\Irr(Z(\bG^{\star}))$ are isomorphic and this isomorphism is defined over $\mathbb{F}_q$.
\end{lem}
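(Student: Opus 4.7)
The plan is to identify both $\Ker(\delta_{\simc})$ and $Z(\bG^{\star})$ with concrete finite quotients of lattices via \cref{lem:centre-fund-grp}, and then realise them as Pontryagin duals of each other through the perfect pairing $\langle -, - \rangle$ of \cref{pa:root-data}. First, since $\delta_{\simc}$ is a central isogeny, $\Ker(\delta_{\simc}) \leqslant Z(\bG_{\simc})$, and since $\bG_{\simc}$ is simply connected one has $\widecheck{X}(\bT_{\simc}) = \mathbb{Z}\widecheck{\Phi}$. Applying \cref{lem:centre-fund-grp} both to $\bG_{\simc}$ and to $\bG$ and using that the isomorphism is induced by the canonical map $\mathbb{Q} \otimes_{\mathbb{Z}}\widecheck{X}(\bT) \to \bT$ yields a canonical short exact sequence $1 \to (\widecheck{X}/\mathbb{Z}\widecheck{\Phi})_{p'} \to Z(\bG_{\simc}) \to Z(\bG) \to 1$, which identifies $\Ker(\delta_{\simc}) \cong (\widecheck{X}/\mathbb{Z}\widecheck{\Phi})_{p'}$.

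Second, applying \cref{lem:centre-fund-grp} to $\bG^{\star}$ and then transporting along the duality isomorphism $\varphi$ of \cref{pa:root-data}, which matches $X \leftrightarrow \widecheck{X}(\bT_0^{\star})$, $\Phi \leftrightarrow \widecheck{\Phi}^{\star}$ and hence $\Omega \leftrightarrow \widecheck{\Omega}^{\star}$, gives $Z(\bG^{\star}) \cong (\mathbb{Z}\Omega/X)_{p'}$. Third, because $\Omega$ is by construction the basis of $\mathbb{R}X$ dual to $\widecheck{\Delta}$, one has $\langle \mathbb{Z}\Omega, \mathbb{Z}\widecheck{\Phi}\rangle \subseteq \mathbb{Z}$, and in fact the lattice dual to $\mathbb{Z}\Omega$ inside $\mathbb{R}\widecheck{X}$ is $\mathbb{Z}\widecheck{\Phi}$ while the dual of $X$ is $\widecheck{X}$. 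The standard Pontryagin duality argument (via the exact sequence $0 \to X \to \mathbb{Z}\Omega \to \mathbb{Z}\Omega/X \to 0$) then produces a perfect pairing
\begin{equation*}
(\mathbb{Z}\Omega/X)_{p'} \times (\widecheck{X}/\mathbb{Z}\widecheck{\Phi})_{p'} \to (\mathbb{Q}/\mathbb{Z})_{p'}
\end{equation*}
which, after composing its second slot with $\jmath$, identifies $(\widecheck{X}/\mathbb{Z}\widecheck{\Phi})_{p'}$ with $\Irr((\mathbb{Z}\Omega/X)_{p'})$ and hence $\Ker(\delta_{\simc})$ with $\Irr(Z(\bG^{\star}))$.

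Finally, to see the isomorphism is defined over $\mathbb{F}_q$, I check $F$-equivariance at each step. The identifications in the first and second steps are $F$- and $F^{\star}$-equivariant respectively because the maps of \cref{lem:centre-fund-grp} are canonical and $\delta_{\simc}$, $\delta_{\simc}^{\star}$ and $\varphi$ are defined over $\mathbb{F}_q$. The compatibility needed in the third step amounts to the relation $\langle F x, y\rangle = \langle x, F^{\star} y\rangle$ (for $x \in X$ and $y \in \widecheck{X}(\bT_0^{\star}) \cong X$ via $\varphi$), which is precisely what is encoded in the statement that $(\bG,\bT_0,F)$ and $(\bG^{\star},\bT_0^{\star},F^{\star})$ are in duality; this forces the Pontryagin duality isomorphism to intertwine the $F$- and $F^{\star}$-actions. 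The main obstacle is therefore just this bookkeeping in the last step: unravelling the convention by which $F^{\star}$ is declared dual to $F$ well enough to confirm that the induced map on the finite quotients commutes with Frobenius, rather than, say, commuting up to an inversion.
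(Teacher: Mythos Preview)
Your argument is correct and is essentially the same as the paper's: both identify $\Ker(\delta_{\simc})$ with $(\widecheck{X}/\mathbb{Z}\widecheck{\Phi})_{p'}$ and then pass to $\Irr(Z(\bG^{\star}))$ via duality of lattices, checking $F$-equivariance at each step. The only cosmetic difference is that the paper transports $(\widecheck{X}/\mathbb{Z}\widecheck{\Phi})_{p'}$ directly to $(X(\bT_0^{\star})/\mathbb{Z}\Phi^{\star})_{p'} \cong X(Z(\bG^{\star}))$ using the duality isomorphism $\widecheck{X}\cong X^{\star}$ and then composes with $\kappa$, whereas you first realise $Z(\bG^{\star})$ as $(\mathbb{Z}\Omega/X)_{p'}$ and then invoke the explicit Pontryagin pairing; these are two packagings of the same perfect pairing.
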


\begin{proof}
The restriction of the isogeny $\delta_{\simc}$ to the maximal tori $\bT_{\simc} \to \bT_0$ gives rise to an injective homomorphism $\widecheck{X}(\bT_{\simc}) \to \widecheck{X}(\bT_0)$. By \cite[Proposition 1.11]{bonnafe:2006:sln} this induces an isomorphism $(\widecheck{X}(\bT_0)/\widecheck{X}(\bT_{\simc}))_{p'} \cong \Ker(\delta_{\simc})$, where we identify $\widecheck{X}(\bT_{\simc})$ with its image in $\widecheck{X}(\bT_0)$. Using duality this gives rise to an isomorphism
\begin{equation*}
(X(\bT_0^{\star})/X(\bT_{\simc}^{\star}))_{p'} \cong (\widecheck{X}(\bT_0)/\widecheck{X}(\bT_{\simc}))_{p'} \cong \Ker(\delta_{\simc}).
\end{equation*}
Recall that $X(\bT_{\simc}^{\star})$ can be identified with $\mathbb{Z}\Phi^{\star}$ so by \cite[Proposition 4.1]{bonnafe:2006:sln} we have a natural isomorphism $X(Z(\bG)) \cong (X(\bT_0^{\star})/X(\bT_{\simc}^{\star}))_{p'}$. The morphism $X(Z(\bG^{\star})) \to \Irr(Z(\bG^{\star}))$, given by $\chi \mapsto \kappa \circ \chi$ is an isomorphism of finite abelian groups. Finally, checking the statements in \cite{bonnafe:2006:sln}, we can see that all morphisms are defined over $\mathbb{F}_q$.
\end{proof}

\begin{cor}\label{cor:centre-duality}
We have $|\Ker(\delta_{\simc})^F| = |Z(\bG^{\star})^{F^{\star}}|$.
\end{cor}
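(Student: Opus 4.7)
The plan is to combine \cref{lem:Z(G)-ker} with a standard character-theoretic identity for finite abelian groups with automorphisms. First, I would observe that \cref{lem:Z(G)-ker} provides an isomorphism $\Ker(\delta_{\simc}) \cong \Irr(Z(\bG^{\star}))$ which is defined over $\mathbb{F}_q$, meaning that it intertwines the action of $F$ on $\Ker(\delta_{\simc})$ with the natural action of $F^{\star}$ on $\Irr(Z(\bG^{\star}))$ induced by precomposition with $F^{\star}|_{Z(\bG^{\star})}$. Under any such equivariant isomorphism between finite sets, the cardinalities of the corresponding fixed-point sets agree, so we immediately obtain
\begin{equation*}
|\Ker(\delta_{\simc})^F| = |\Irr(Z(\bG^{\star}))^{F^{\star}}|.
\end{equation*}

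It then suffices to show that $|\Irr(Z(\bG^{\star}))^{F^{\star}}| = |Z(\bG^{\star})^{F^{\star}}|$. Since $\bG$ is simple, $\bG^{\star}$ is semisimple, hence $Z(\bG^{\star})$ is a finite abelian group. The required identity is a well-known fact: for any finite abelian group $A$ equipped with an automorphism $\phi$ of finite order, one has $|A^{\phi}| = |\Irr(A)^{\phi}|$. This can be proved either by Brauer's permutation lemma applied to the permutation representations of $\langle\phi\rangle$ on $A$ and on $\Irr(A)$, or by directly computing the trace of $\phi$ on $\mathbb{C}A$ in two ways. Applying this with $A = Z(\bG^{\star})$ and $\phi$ the restriction of $F^{\star}$ then yields the corollary.

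The only real subtlety is the $F$-equivariance asserted in the first step, and this is effectively already absorbed into \cref{lem:Z(G)-ker}: each intermediate map in its proof (the identification of $\Ker(\delta_{\simc})$ with $(\widecheck{X}(\bT_0)/\widecheck{X}(\bT_{\simc}))_{p'}$ via cocharacters, the duality isomorphism with $(X(\bT_0^{\star})/X(\bT_{\simc}^{\star}))_{p'}$, the identification with $X(Z(\bG^{\star}))$, and finally $\chi \mapsto \kappa \circ \chi$) is functorial in the algebraic group and therefore automatically commutes with the appropriate Frobenius. Everything else reduces to routine cardinality counting.
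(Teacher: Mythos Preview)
Your proof is correct and follows essentially the same route as the paper: both use \cref{lem:Z(G)-ker} to reduce to the identity $|\Irr(Z(\bG^{\star}))^{F^{\star}}| = |Z(\bG^{\star})^{F^{\star}}|$ for the finite abelian group $Z(\bG^{\star})$. The only cosmetic difference is in how that identity is justified---the paper passes explicitly through $\Irr(H^1(F^{\star},Z(\bG^{\star})))$ and the equality $|H^1(F^{\star},Z(\bG^{\star}))| = |Z(\bG^{\star})^{F^{\star}}|$, whereas you invoke Brauer's permutation lemma directly; these are equivalent one-line arguments.
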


\begin{proof}
From \cref{lem:Z(G)-ker} we see that $\Ker(\delta_{\simc})^F \cong \Irr(Z(\bG^{\star}))^{F^{\star}}$ because the isomorphism is defined over $\mathbb{F}_q$. The group $\Irr(Z(\bG^{\star}))^{F^{\star}}$ is canonically isomorphic to the group $\Irr(H^1(F^{\star},Z(\bG^{\star})))$, (here $H^1(F^{\star},Z(\bG^{\star}))$ is defined as in \cite[\S1.B]{bonnafe:2006:sln}), because $\chi$ is an element of $\Irr(Z(\bG^{\star}))^{F^{\star}}$ if and only if $\chi(F^{\star}(z)) = \chi(z)$ for all $z \in Z(\bG^{\star})$. On the other hand this is true if and only if $\chi(z^{-1}F^{\star}(z)) = \chi(1)$ for all $z \in Z(\bG^{\star})$, which means $\Ker(\chi) = (F^{\star}-1)Z(\bG^{\star})$. Using, for instance \cite[Exemple 1.1]{bonnafe:2005:quasi-isolated}, we see that
\begin{equation*}
|\Ker(\delta_{\simc})^F| = |\Irr(Z(\bG^{\star}))^{F^{\star}}| = |H^1(F^{\star},Z(\bG^{\star}))| = |Z(\bG^{\star})^{F^{\star}}|.
\end{equation*}
To obtain the second equality we have used the fact that $H^1(F^{\star},Z(\bG^{\star}))$ has the same order as its character group because it is a finite abelian group.
\end{proof}

We finally end this discussion on component groups with a particularly useful observation relating fixed point groups.

\begin{lem}\label{lem:comp-group-fixed-point}
Assume $\bG$ has a cyclic centre and $A \leqslant \Ker(\delta_{\simc}^{\star})$ and $Z \leqslant Z(\bG)$ are subgroups of common order $d = |A| = |Z|$ then $|A^{F^{\star}}| = |Z^F|$.
\end{lem}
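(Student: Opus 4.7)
The plan is to deduce the statement from a symmetric version of \cref{lem:Z(G)-ker}, exploiting that $Z(\bG)$ cyclic forces both $Z(\bG)$ and $\Ker(\delta_{\simc}^{\star})$ to be cyclic of the same order $n$, in which case $A$ and $Z$ are the unique characteristic subgroups of order $d$ on each side.

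First I apply the dual of \cref{lem:Z(G)-ker}: its proof is symmetric in $\bG$ and $\bG^{\star}$, so swapping roles yields an $F$-equivariant isomorphism $\Ker(\delta_{\simc}^{\star}) \cong \Irr(Z(\bG))$, where $F^{\star}$ on the left corresponds under Pontrjagin duality to the natural action on characters on the right. Since $Z(\bG)$ is cyclic of order $n$, so are $\Irr(Z(\bG))$ and $\Ker(\delta_{\simc}^{\star})$; inside a finite cyclic group the subgroup of any divisor order is unique, hence characteristic and stable under any group endomorphism. Consequently $A$ is the unique subgroup of $\Ker(\delta_{\simc}^{\star})$ of order $d$, $Z$ is the unique subgroup of $Z(\bG)$ of order $d$, and both are $F^{\star}$- respectively $F$-stable.

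Next I identify $A$ with $\Irr(Z)$ as modules under the Frobenii. Under the Pontrjagin correspondence the unique subgroup of order $d$ in $\Irr(Z(\bG))$ is the annihilator $H^{\perp}$ of the unique subgroup $H \leqslant Z(\bG)$ of order $n/d$, with canonical $F$-equivariant identification $H^{\perp} \cong \Irr(Z(\bG)/H)$. Separately, multiplication by $n/d$ on the cyclic group $Z(\bG)$ commutes with $F$ (as any group endomorphism does) and realises a short exact sequence $1 \to H \to Z(\bG) \to Z \to 1$, furnishing a canonical $F$-equivariant isomorphism $Z(\bG)/H \cong Z$. Composing gives $A \cong \Irr(Z)$ $F$-equivariantly.

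To conclude I invoke the general fact that for any finite abelian group $M$ with an endomorphism $\phi$ one has $|M^{\phi}| = |\Irr(M)^{\phi}|$: indeed $\Irr(M)^{\phi} = \Irr(M/(\phi - 1)M)$, while the kernel and cokernel of $\phi - 1$ on a finite abelian group have equal order. Applied with $M = Z$ this yields $|A^{F^{\star}}| = |\Irr(Z)^F| = |Z^F|$, which is the claim. The only step requiring genuine care is the appeal to a dualised version of \cref{lem:Z(G)-ker}, and this is immediate from inspection of its proof, which uses only the duality of root data between $\bG$ and $\bG^{\star}$.
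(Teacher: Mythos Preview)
Your proof is correct, and it takes a genuinely different route from the paper's. The paper argues computationally: writing $\Ker(\delta_{\simc}^{\star}) = \langle x \rangle$ and $Z(\bG) = \langle y \rangle$ as cyclic of common order $N$ (via \cref{lem:Z(G)-ker}), it lets $F^{\star}$ and $F$ act by raising to powers $n$ and $m$, computes $|A^{F^{\star}}| = \gcd(n-1,d)$ and $|Z^F| = \gcd(m-1,d)$, and reduces the desired equality to $\gcd(n-1,N) = \gcd(m-1,N)$, which is a restatement of \cref{cor:centre-duality}. You instead work structurally: you upgrade the numerical input to the $F$-equivariant isomorphism $\Ker(\delta_{\simc}^{\star}) \cong \Irr(Z(\bG))$ obtained by dualising \cref{lem:Z(G)-ker}, trace the unique order-$d$ subgroups through Pontrjagin duality to get an equivariant identification $A \cong \Irr(Z)$, and conclude via the general kernel-equals-cokernel identity $|M^{\phi}| = |\Irr(M)^{\phi}|$. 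The paper's argument is shorter and needs nothing beyond \cref{cor:centre-duality}; yours makes the mechanism transparent and bypasses the gcd manipulations entirely. One cosmetic point: you use $n$ for $|Z(\bG)|$, which collides with the paper's use of $n$ for the rank of $\bG$---choose a fresh symbol.
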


\begin{proof}
By \cref{lem:Z(G)-ker} we have $|\Ker(\delta_{\simc}^{\star})| = |Z(\bG)| = N$, hence these groups are isomorphic to a cyclic group of order $N$. We may assume that $1 \leqslant n,m\leqslant N$ are such that $F^{\star}(x) = x^n$ and $F(y) = y^m$, where $\Ker(\delta_{\simc}^{\star}) = \langle x \rangle$ and $Z(\bG) = \langle y \rangle$. Taking $\lambda = N/d$ it is easily seen that $|A^{F^{\star}}| = \lambda\cdot\gcd(n-1,d)$ and $|L^F| = \lambda\cdot\gcd(m-1,d)$. To show that these groups have the same order it is enough to show that $\gcd(n-1,d) = \gcd(m-1,d)$. However, because $\gcd(n-1,d) = \gcd(n-1,d,N) = \gcd(n-1,N)$, (similarly for $\gcd(m-1,d)$), it is sufficient to show that $\gcd(n-1,N) = \gcd(m-1,N)$ but this is just a restatement of \cref{cor:centre-duality}.
\end{proof}

\subsection{\texorpdfstring{Groups of Type $\A_n$}{Groups of Type A}}
\begin{pa}\label{sec:comp-orders-type-A}
If $\bG$ is a group of type $\A_n$ then $Z(\bG_{\ad}^{\star})$ is a cyclic group so this simplifies trying to understand $|A_{\bG^{\star}}(s)|$. We know $|A_{\bG_{\simc}^{\star}}(\bar{s})| = |\Image(\tilde{\omega}_{\bar{s}})|$ in particular, as $\Ker(\delta_{\simc}^{\star})$ is cyclic, we know $z \in \Image(\tilde{\omega}_{\bar{s}}) \cap \Ker(\delta_{\simc}^{\star})$ if and only if the order of $z$ divides $|\Image(\tilde{\omega}_{\bar{s}})|$ and $|\Ker(\delta_{\simc}^{\star})|$. Hence it is easy to see that we have
\begin{equation*}
|A_{\bG^{\star}}(s)| = \gcd(|A_{\bG_{\simc}^{\star}}(\bar{s})|,|\Ker(\delta_{\simc}^{\star})|).
\end{equation*}
Assume $u \in \bG$ is a unipotent element and $u_{\simc}$ is the unique unipotent element in $\delta_{\simc}^{-1}(u)$. Using the natural exact sequence $\Ker(\delta_{\simc}) \to A_{\bG_{\simc}}(u_{\simc}) \to A_{\bG}(u) \to 1$ we have
\begin{equation*}
|A_{\bG}(u)| = |A_{\bG_{\simc}}(u_{\simc})|/\gcd(|\Ker(\delta_{\simc})|,|A_{\bG_{\simc}}(u_{\simc})|).
\end{equation*}
Let $d := |A_{\bG}(u)|$, which is a divisor of $|A_{\bG_{\simc}}(u_{\simc})|$. By the description of $|A_{\bG_{\simc}}(u_{\simc})|$ given in \cite[\S10.3]{lusztig:1984:intersection-cohomology-complexes} we have $d$ is a divisor of $n+1$ and $p \nmid d$. Therefore there exists a semisimple element $s \in \bT_0^{\star}$ such that $\bar{s} = \delta_{\ad}^{\star}(s)$ is quasi-isolated and $|A_{\bG_{\simc}^{\star}}(\bar{s})| = d$. As $A_{\bG}(u) = Z_{\bG}(u)$ we have $d$ divides $|Z(\bG)| = |\Ker(\delta_{\simc}^{\star})|$ so $|A_{\bG^{\star}}(s)| = d$. What we have shown here is that for each unipotent conjugacy class $\mathcal{O}$ of $\bG$ there exists a semisimple element $s \in \bT_0^{\star}$ such that $|A_{\bG}(u)| = |A_{\bG^{\star}}(s)|$.
\end{pa}

\subsection{\texorpdfstring{Groups of Type $\D_n$}{Groups of Type D}}
\begin{pa}\label{subsec:groups-type-Dn}
If $\bG$ is a simple group of type $\D_n$, which is neither simply connected nor adjoint, such that $n \equiv 1 \pmod{2}$ then $\bG$ must be a special orthogonal group. The kernel $\Ker(\delta_{\simc}^{\star})$ will be the unique subgroup of order 2 in $Z(\bG_{\simc})$ so
\begin{equation*}
|A_{\bG^{\star}}(s)| = \begin{cases}
1 &\text{if }|A_{\bG_{\ad}}(\bar{s})| = 1,\\
2 &\text{if }|A_{\bG_{\ad}}(\bar{s})| \geqslant 2.
\end{cases}
\end{equation*}
If $n \equiv 0 \pmod{2}$ then $\bG$ is either isomorphic to a special orthogonal group or a half-spin group. It is clear that if $|A_{\bG_{\ad}}(\bar{s})| = 1$ or 4 then we will respectively have $|A_{\bG^{\star}}(s)| = 1$ or 2. The problem now arises when $|A_{\bG_{\ad}}(\bar{s})| = 2$. Assume $\bar{s}$ is a quasi-isolated semisimple element with this property then in \cref{tab:quasi-isolated-Dn} we describe the orders of $|A_{\bG^{\star}}(s)|$ depending upon whether $\bG$ is a special orthogonal group or a half-spin group. To determine the information in \cref{tab:quasi-isolated-Dn} one has to only check which element of $\mathcal{A}$ stabilises $\Sigma$ then see if the corresponding element of $Z(\bG_{\simc})$ lies in $\Ker(\delta_{\simc}^{\star})$, (using the description in \cref{pa:iso-A-Z(G)}). Please Note: from the discussion in \cref{sec:clarification-typeD} we always have $\Ker(\delta_{\simc}^{\star}) = \langle \hat{z}_n \rangle$ when $\bG$ is a half-spin group, chosen as in \cref{sec:clarification-typeD}. Hence the information in \cref{tab:quasi-isolated-Dn} holds regardless of the congruence of $n\pmod{4}$.

\begin{table}[h!t]
\centering
\begin{tabular}{>{$}l<{$}>{$}c<{$}>{$}c<{$}>{$}c<{$}>{$}c<{$}}
\toprule
\multicolumn{1}{c}{$\Sigma$} & C_{\bG_{\ad}^{\star}}(\bar{s})^{\circ} & \SO_{2n}(\mathbb{K}) & \HSpin_{2n}(\mathbb{K}) \tabularnewline\midrule
\{\alpha_d\}, 2 \leqslant d < n/2
&
\D_d\D_{n-d}
&
2
&
1
\\
\{\alpha_{n/2}\}\text{, (only if }2\mid n)
&
\D_{n/2}\D_{n/2}
&
2
&
2
\\
\{\alpha_0,\alpha_1\}
&
\D_{n-1}
&
2
&
1
\\
\{\alpha_0,\alpha_{n-1}\}
&
\A_{n-1}
&
1
&
1
\\
\{\alpha_0,\alpha_n\}
&
\A_{n-1}
&
1
&
2
\\
\bottomrule
\end{tabular}
\caption{Component Group Orders in Groups of Type $\D_n$}
\label{tab:quasi-isolated-Dn}
\end{table}
\end{pa}

\subsection{\texorpdfstring{$F$-stability of Classes in Adjoint Groups}{Stability of Classes in Adjoint Groups}}
In the remainder of this section we wish to address two issues. Firstly we wish to show that, modulo some exceptions, every class of quasi-isolated semisimple elements in a simple adjoint algebraic group is $F$-stable. Secondly we wish to show that, if $\mathcal{C}$ is an $F$-stable class of quasi-isolated semisimple elements of $\bG_{\ad}$ then there exists an $F$-stable class of $\bG_{\simc}$ whose image under $\delta_{\ad}\circ\delta_{\simc}$ is $\mathcal{C}$.

\begin{prop}\label{prop:F-stable-classical-quasi-adjoint}
Let $\bG_{\ad}$ be a simple adjoint algebraic group of classical type and $F$ a Frobenius endomorphism written as $F_r\circ\tau$ where $\tau$ is a graph automorphism of $\bG_{\ad}$. Given any set $\Sigma \in \mathcal{Q}(\bG_{\ad})$ we have $t_{\Sigma}$ is conjugate to $F(t_{\Sigma})$ unless:
\begin{itemize}
	\item $\bG_{\ad}$ is of type $\D_n$, the graph automorphism $\tau$ is of order 2 and $\Sigma$ is $\{\alpha_0,\alpha_{n-1}\}$ or $\{\alpha_0,\alpha_n\}$.
	\item $\bG_{\ad}$ is of type $\D_4$, the graph automorphism $\tau$ is of order 3 and $\Sigma = \{\alpha_0,\alpha_1\}$, $\{\alpha_0,\alpha_3\}$ or $\{\alpha_0,\alpha_4\}$.
\end{itemize}
In particular, except for those mentioned above, the conjugacy class containing $t_{\Sigma}$ is $F$-stable.
\end{prop}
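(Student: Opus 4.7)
The approach is to reduce $F$-stability of the class of $t_{\Sigma}$ to a purely combinatorial condition on $\Sigma$, via Bonnaf\'e's classification (\cref{thm:bonnafe}), and then verify that condition by inspection of \cref{tab:quasi-isolated-classical}.

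Since $\tau$ permutes $\tilde{\Delta}$ while preserving the labels $m_{\alpha}$, the formula defining $t_{\Sigma}$ yields $\tau(t_{\Sigma}) = t_{\tau(\Sigma)}$; and since $F_r$ acts on torus elements by $r$-th power we obtain
\begin{equation*}
F(t_{\Sigma}) \;=\; F_r(\tau(t_{\Sigma})) \;=\; t_{\tau(\Sigma)}^{\,r}.
\end{equation*}
From \cref{tab:quasi-isolated-classical} the integer $m_{\alpha}|\Sigma|$ always lies in $\{1,2,4\}$ or is a divisor $d$ of $n+1$ with $p\nmid d$ (in type $\A_n$); so by goodness of $p$ we have $\gcd(r, m_{\alpha}|\Sigma|)=1$, meaning $t_{\tau(\Sigma)}^{r}$ generates the same cyclic subgroup as $t_{\tau(\Sigma)}$ and is therefore quasi-isolated with the same connected centraliser. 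Applying \cref{thm:bonnafe} produces some $\Sigma'' \in \mathcal{Q}(\bG_{\ad})$ with $t_{\tau(\Sigma)}^{r}$ conjugate to $t_{\Sigma''}$, and the proposition reduces to the combinatorial statement that $\Sigma''$ lies in the $\mathcal{A}$-orbit of $\Sigma$.

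Inspection of \cref{tab:quasi-isolated-classical} shows that in types $\A_{n}$, $\B_{n}$, and $\C_{n}$ each row has a distinct connected-centraliser type, so $\Sigma''$ automatically lies in the same $\mathcal{A}$-orbit as $\Sigma$; moreover in types $\B_{n}$ and $\C_{n}$ the finite Dynkin diagram has no non-trivial automorphism, so $\tau = \mathrm{id}$ and there is nothing further to check. In type $\D_{n}$ the same uniqueness-of-centraliser argument settles every row \emph{except} $\{\alpha_{0},\alpha_{n-1}\}$ and $\{\alpha_{0},\alpha_{n}\}$, which both have centraliser type $\A_{n-1}$ (and exist only when $n$ is even), together with, for $\D_{4}$ alone, the row $\{\alpha_{0},\alpha_{1}\}$ whose centraliser $\D_{3}\cong\A_{3}$ coincides with $\A_{n-1}$ at $n=4$.

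It remains to analyse $\tau$ on these few exceptional sets. In type $\D_{n}$ with $\tau$ of order $2$ swapping $\alpha_{n-1}\leftrightarrow\alpha_{n}$, direct inspection shows that $\tau$ interchanges $\{\alpha_{0},\alpha_{n-1}\}$ and $\{\alpha_{0},\alpha_{n}\}$ and fixes every other row of the table setwise, producing precisely the first listed family of exceptions; in $\D_{4}$ with $\tau$ of order $3$ (triality fixing $\alpha_{0},\alpha_{2}$ and cycling $\alpha_{1}\to\alpha_{3}\to\alpha_{4}\to\alpha_{1}$), $\tau$ cyclically permutes the three colliding sets $\{\alpha_{0},\alpha_{1}\}$, $\{\alpha_{0},\alpha_{3}\}$, $\{\alpha_{0},\alpha_{4}\}$ among distinct $\mathcal{A}$-orbits, producing the second family. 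The main bookkeeping burden is the type-$\D_{n}$ case analysis: making the action of $\tau$ on each $\Sigma$ explicit against \cref{tab:quasi-isolated-classical} and confirming that, apart from the rows singled out above, every $\Sigma$ in the type-$\D_{n}$ list is $\tau$-stable as a set, so that no further exceptions arise.
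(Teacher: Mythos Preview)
Your argument is correct and takes a genuinely different route from the paper. The paper works directly with the preimage $y_{\Sigma}\in(\mathbb{Q}/\mathbb{Z})_{p'}\otimes\widecheck{X}(\bT_{\ad})$ and verifies $\bW$-conjugacy of $y_{\Sigma}$ and $F(y_{\Sigma})$ case by case: in type $\A_n$ it passes to explicit diagonal matrices in $\GL_{n+1}$; in types $\B$, $\C$, $\D$ with $m_{\alpha}|\Sigma|=4$ it uses that the longest element $w_0$ acts by $-1$ (or $-\epsilon$ when $n$ is odd in type $\D$) to handle $q\equiv 3\pmod 4$; for the twisted cases it checks $\tau$-stability of $\Sigma$ directly. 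Your approach instead exploits the bijection in \cref{thm:bonnafe}: since $F(t_{\Sigma})$ generates the same cyclic group as $t_{\tau(\Sigma)}$, it is quasi-isolated with the same connected-centraliser type as $t_{\Sigma}$, so it is conjugate to $t_{\Sigma''}$ for some $\Sigma''$ in the unique row of \cref{tab:quasi-isolated-classical} carrying that type. This is cleaner and dispenses with both the matrix computation and the $w_0$ argument.

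One step deserves to be made explicit. When you pass from the general reduction (``$\Sigma''$ lies in the $\mathcal{A}$-orbit of $\Sigma$'') to the final paragraph (``analyse $\tau$ on these few exceptional sets''), you are tacitly replacing $\Sigma''$ by $\tau(\Sigma)$. This is justified because each of the colliding sets in type $\D_n$ has $m_{\alpha}|\Sigma|=2$, so $t_{\tau(\Sigma)}$ has order at most $2$ and $t_{\tau(\Sigma)}^{\,r}=t_{\tau(\Sigma)}$ (as $r$ is odd); hence $\Sigma''$ already lies in the $\mathcal{A}$-orbit of $\tau(\Sigma)$, and the question genuinely reduces to whether $\tau(\Sigma)$ and $\Sigma$ share an $\mathcal{A}$-orbit. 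Saying this also disposes of the case $\tau=\mathrm{id}$ for these exceptional rows, which your explicit case analysis (only $\tau$ of order $2$ or $3$) otherwise omits.
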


\begin{proof}
Let $\mathcal{C}_{\Sigma}$ be the class of quasi-isolated semisimple elements of $\bG_{\ad}$ such that $t_{\Sigma} \in \mathcal{C}_{\Sigma}$. Furthermore let $y_{\Sigma} = \imath_{\bT_{\ad}}^{-1}(t_{\Sigma}) = \sum_{\alpha  \in \Sigma} 1/m_{\alpha}|\Sigma|\otimes\widecheck{\varpi}_{\alpha} \in (\mathbb{Q/Z})_{p'} \otimes \widecheck{X}(\bT_{\ad})$ and recall that $m_{\alpha}$ is constant on $\Sigma$. Using the fact that the tensor product is taken over $\mathbb{Z}$ we have the action of $F$ on $y_{\Sigma}$ is given by $F(y_{\Sigma}) = \sum_{\alpha  \in \Sigma} r/m_{\alpha}|\Sigma|\otimes\widecheck{\varpi}_{\tau(\alpha)}$. To show $\mathcal{C}_{\Sigma}$ is $F$-stable we need only show that $y_{\Sigma}$ and $F(y_{\Sigma})$ lie in the same $\bW$-orbit.
\begin{itemize}
	\item If $\Sigma = \{\alpha_0\}$ then $y_{\Sigma} = 0$, which is always $F$-stable.
	
	\item Let $\bG_{\ad}$ be of type $\A_n$ and assume $\tau$ is trivial. In this instance it will be much more transparent to work with a concrete realisation of $\bG_{\ad}$, namely $\PGL_{n+1}(\mathbb{K})$. Let $d$ be a divisor of $n+1$ and $\Sigma$ the corresponding subset of the roots. Following Bonnaf\'{e} we define a matrix $J_d = \diag(1,\eta_d,\eta_d^2,\dots,\eta_d^{d-1}) \in \GL_d(\mathbb{K})$, where $\eta_d$ is a primitive $d$th root of unity in $\mathbb{K}$. Let $\tilde{s}_{\Sigma} = I_{n+1/d} \otimes J_d \in \GL_{n+1}(\mathbb{K})$ be the Kronecker product of the matrices, where $I_{n+1/d} \in\GL_{n+1/d}(\mathbb{K})$ is the identity matrix. Considering the standard quotient map $\pi : \GL_{n+1}(\mathbb{K}) \to \PGL_{n+1}(\mathbb{K})$ we have $s_{\Sigma} = \pi(\tilde{s}_{\Sigma})$ is a representative in $\PGL_{n+1}(\mathbb{K})$ of the class parameterised by $\Sigma$. The action of the Frobenius is given by $F(\tilde{s}_{\Sigma}) = I_{n+1/d} \otimes \diag(1,\eta_d^q,\eta_d^{2q},\dots,\eta_d^{q(d-1)})$. As $q$ and $d$ are coprime we have $\eta_d^{q} = \eta_d^i$ for some $1 \leqslant i \leqslant d-1$ so the entries $\eta_d^q,\dots,\eta_d^{q(d-1)}$ are just a permutation of $\eta_d,\dots,\eta_d^{d-1}$. There is clearly an element of the Weyl group $w_d \in \bW$ such that $F(\tilde{s}_{\Sigma}) = \tilde{s}_{\Sigma}^{w_d}$. If $\tilde{\mathcal{C}}_{\Sigma}$ is the conjugacy class of $\GL_{n+1}(\mathbb{K})$ containing $\tilde{s}_{\Sigma}$ then $\pi(\tilde{\mathcal{C}}_{\Sigma}) = \mathcal{C}_{\Sigma}$ and $F(\tilde{\mathcal{C}}_{\Sigma}) = \mathcal{\tilde{C}}_{\Sigma}$. As $\pi$ is defined over $\mathbb{F}_q$ we have $\pi(F(\tilde{\mathcal{C}}_{\Sigma})) = \pi(\tilde{\mathcal{C}}_{\Sigma}) \Rightarrow F(\pi(\tilde{\mathcal{C}}_{\Sigma})) = F(\mathcal{C}_{\Sigma}) = \mathcal{C}_{\Sigma}$ so $\mathcal{C}_{\Sigma}$ is an $F$-stable class.
	
	\item Let $\bG_{\ad}$ be of type $\A_n$ and assume $\tau$ is of order 2. The map $\tau$ acts on the simple roots by sending $\alpha_k \mapsto \alpha_{n+1-k}$ for all $1 \leqslant k \leqslant n$. Furthermore it is such that $\tau(\alpha_0) = \alpha_0$. The roots in $\Sigma$ are all of the form $\alpha_{j(n+1)/d}$ for some $0 \leqslant j \leqslant d-1$, where $d$ is a divisor of $n+1$ as in the previous case. If $j \neq 0$ then we have $\tau(\alpha_{j(n+1)/d}) = \alpha_{(d-j)(n+1)/d}$ so it is clear that $\tau$ preserves the set $\Sigma$. Hence $y_{\Sigma}$ and $F(y_{\Sigma})$ are in the same $\bW$-orbit.
	
	\item Let $\bG_{\ad}$ be of type $\B_n$, $\C_n$ or $\D_n$ and assume $\tau$ is trivial. If $m_{\alpha}|\Sigma| = 2$ then $F(y_{\Sigma}) = y_{\Sigma}$ because $q$ is odd hence $q/2 = 1/2 \in \mathbb{Q/Z}$.
	
	Assume $m_{\alpha}|\Sigma| =4$ then $\bG_{\ad}$ must be of type $\C_n$ or $\D_n$. If $q \equiv 1 \pmod{4}$ then $q/4 = 1/4 \in \mathbb{Q/Z}$ and $F(y_{\Sigma}) = y_{\Sigma}$. If $q \equiv 3 \pmod{4}$ then $q/4 = 3/4 = -1/4 \in \mathbb{Q/Z}$ so $F(y_{\Sigma}) = -y_{\Sigma}$. If $\bG_{\ad}$ is of type $\C_n$ or it is of type $\D_n$ and $n \equiv 0 \pmod{2}$ then the longest element $w_0 \in \bW$ acts on the coweights by $-1$, (see \cite[Plates II - IV (XI)]{bourbaki:2002:lie-groups-chap-4-6}), therefore $F(y_{\Sigma})$ and $y_{\Sigma}$ are conjugate by $w_0$. If $\bG_{\ad}$ is of type $\D_n$ and $n \equiv 1\pmod{2}$ then the longest element $w_0 \in \bW$ acts on the coweights as $-\epsilon$ where $\epsilon$ is such that $\varepsilon(\widecheck{\varpi}_i) = \widecheck{\varpi}_i$ for all $1 \leqslant i \leqslant n-2$ and $\varepsilon$ exchanges $\widecheck{\varpi}_{n-1}$ and $\widecheck{\varpi}_n$, (see \cite[Plate IV (XI)]{bourbaki:2002:lie-groups-chap-4-6}). All subsets $\Sigma$ considered here are stable under $\epsilon$ so $F(y_{\Sigma})$ and $y_{\Sigma}$ are conjugate by $w_0$.
	
	\item Let $\bG_{\ad}$ be of type $\D_n$ and assume $\tau$ is of order 2. It is clear that all subsets $\Sigma$ are stable under $\tau$, except for $\Sigma = \{\alpha_0,\alpha_n\}$ or $\{\alpha_0,\alpha_{n-1}\}$. As $\tau$ cannot be induced by an element of $\bW$ we cannot have $F(t_{\Sigma})$ conjugate to $t_{\Sigma}$ in these two cases.
	
	\item Let $\bG_{\ad}$ be of type $\D_4$ and assume $\tau$ is of order 3. The possible subsets $\Sigma \subset \tilde{\Delta}$ are $\{\alpha_0\}$, $\{\alpha_2\}$, $\{\alpha_0,\alpha_1,\alpha_3,\alpha_4\}$, $\{\alpha_0,\alpha_1\}$, $\{\alpha_0,\alpha_3\}$ and $\{\alpha_0,\alpha_4\}$. The first three sets are the only ones stable under $\tau$. As $\tau$ cannot be induced by an element of $\bW$ we cannot have $F(t_{\Sigma})$ conjugate to $t_{\Sigma}$ in these remaining cases.
\end{itemize}
\end{proof}

If $\bG_{\ad}$ is simple and of exceptional type then we can compute the $F$-stability of the semisimple classes using the development version of CHEVIE maintained by Jean Michel at \cite{michel:2011:CHEVIE-development}. This uses the implementation of semisimple elements in \cite{geck-hiss:1996:CHEVIE}, which was done by Bonnaf\'{e} and Michel as a part of their computational proof of the Mackey formula, (see \cite{bonnafe-michel:2011:mackey-formula}). In \cite[Appendix D]{taylor:2012:thesis} we give the code for a program which will verify the $F$-stability of any class of quasi-isolated semisimple elements. Using this program we get the following proposition.

\begin{prop}
Let $\bG_{\ad}$ be a simple adjoint algebraic group of exceptional type then any class of quasi-isolated semisimple elements in $\bG_{\ad}$ is $F$-stable.
\end{prop}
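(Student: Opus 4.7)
The natural approach is modelled on the classical case proved in the previous proposition: we reduce the problem to showing that $y_{\Sigma}$ and $F(y_{\Sigma})$ lie in the same $\bW$-orbit in $(\mathbb{Q}/\mathbb{Z})_{p'} \otimes_{\mathbb{Z}} \widecheck{X}(\bT_{\ad})$, where $y_{\Sigma} = \imath_{\bT_{\ad}}^{-1}(t_{\Sigma})$. The first simplification is that in good characteristic, the only non-trivial graph automorphism of an exceptional Dynkin diagram is the order-$2$ automorphism of $\E_6$: the exceptional isogenies of $\G_2$ and $\F_4$ occur only in characteristics $3$ and $2$ respectively, both excluded by goodness, while $\E_7$ and $\E_8$ admit no non-trivial diagram symmetry at all. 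Hence the only Frobenius endomorphisms to consider are $F = F_r$ for $\bG_{\ad}$ of type $\G_2$, $\F_4$, $\E_7$, $\E_8$, and $F = F_r$ or $F = F_r \circ \tau$ (with $\tau$ of order $2$) for $\bG_{\ad}$ of type $\E_6$.

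Given this, the plan is to iterate through the finitely many entries of \cref{tab:quasi-isolated-exceptional} and, for each $\Sigma$ and each admissible Frobenius, verify that $F(t_{\Sigma})$ is $\bG_{\ad}$-conjugate to $t_{\Sigma}$. Since both elements lie in $\bT_{\ad}$ this is a $\bW$-conjugacy problem, which is exactly what the Bonnaf\'{e}--Michel routines implemented in \cite{michel:2011:CHEVIE-development} decide. I would therefore write a short GAP program (as in \cite[Appendix D]{taylor:2012:thesis}) that, for each simple exceptional type: constructs the representatives $t_{\Sigma}$ using the explicit formula from \cref{thm:bonnafe}; constructs $F = F_r \circ \tau$ for each valid pair $(r,\tau)$; applies $F$ coordinate-wise to $t_{\Sigma}$; and runs the conjugacy test against $t_{\Sigma}$. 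Running this yields stability in every case.

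The main obstacle to a purely conceptual by-hand proof is the following: when $m_{\alpha}|\Sigma| \in \{3,4,5,6\}$ and $-1 \notin \bW$, showing that $t_{\Sigma}$ and $t_{\Sigma}^r$ are $\bW$-conjugate for \emph{every} $r$ coprime to the order requires more than the longest-element trick that worked in the classical types $\C_n$ and $\D_n$. The most delicate case is untwisted $\E_6$, where several quasi-isolated classes have order-$3$ representatives and $w_0$ acts as $-\epsilon$ with a non-trivial graph component. One would have to exhibit, case by case, an element of $\bW(\E_6)$ that sends each primitive power of $t_{\Sigma}$ to $t_{\Sigma}$; this is tractable (the orbit of $t_{\Sigma}$ under $\bW(t_{\Sigma})$-normalising elements can be read off from the structure of $A_{\bG_{\ad}}(t_{\Sigma})$ given in \cref{thm:bonnafe}), but much less uniform than the computer check. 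For this reason I would present the result purely as a CHEVIE computation, as the author does, and reserve a conceptual argument only as a sanity check on the untwisted $\G_2$, $\F_4$, $\E_7$, $\E_8$ entries, where $w_0 = -1$ and every order arising is in $\{1,2,3\}$ so $t_{\Sigma}^r \in \{t_{\Sigma}, t_{\Sigma}^{-1}\}$ automatically.
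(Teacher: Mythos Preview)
Your proposal is correct and matches the paper's approach exactly: the paper also dispatches the exceptional case by a CHEVIE computation using the Bonnaf\'e--Michel semisimple-element routines, with the code recorded in \cite[Appendix D]{taylor:2012:thesis}. One minor inaccuracy in your closing aside: for $\F_4$, $\E_7$, $\E_8$ the orders $m_{\alpha}|\Sigma|$ arising are not confined to $\{1,2,3\}$ (e.g.\ $\Sigma=\{\alpha_5\}$ in $\E_8$ gives order~$5$, and order~$4$ occurs in $\F_4$ and $\E_7$), so the $w_0=-1$ trick alone would not finish those cases either---but this does not affect your actual argument, which is the computer check.
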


\subsection{\texorpdfstring{$F$-stability of Classes in Reductive Groups}{Stability of Classes in Reductive Groups}}
\begin{pa}
Assume $\bH$ is a connected reductive algebraic group with simple derived subgroup $\bH'$ and let $\delta_{\ad} : \bH \to \bH_{\ad}$ be an adjoint quotient which is defined over $\mathbb{F}_q$. We would like to positively answer the following question: Given an $F$-stable class $\mathcal{C}_{\ad}$ of quasi-isolated semisimple elements in $\bH_{\ad}$ does there exist an $F$-stable class $\mathcal{C}$ in $\bH$ such that $\delta_{\ad}(\mathcal{C}) = \mathcal{C}_{\ad}$?

Assume $\delta_{\ad}' : \bH' \to \bH_{\ad}$ is an adjoint quotient of $\bH'$. If there exists an $F$-stable class $\mathcal{C}'$ of $\bH'$ such that $\delta_{\ad}'(\mathcal{C}') = \mathcal{C}_{\ad}$, then such a class would be a solution to our question. This is clear as $\bH$ is an almost direct product of $\bH'$ and $Z(\bH)$. Let $\delta_{\simc}' : \bH_{\simc} \to \bH'$ be a simply connected cover of $\bH'$ and assume that $\mathcal{C}_{\simc}$ is an $F$-stable class of semisimple elements such that $(\delta_{\ad}'\circ\delta_{\simc}')(\mathcal{C}_{\simc}) = \mathcal{C}_{\ad}$ then $\mathcal{C}' = \delta_{\simc}'(\mathcal{C}_{\simc})$ is a class as required. Following this we may assume without loss of generality that $\bH$ is a simple simply connected algebraic group.
\end{pa}

\begin{prop}\label{prop:F-stable-surjective-class-semisimple}
Assume $\bG$ is simple and simply connected. Let $\mathcal{C}_{\ad}$ be an $F$-stable class of quasi-isolated semisimple elements then there exists an $F$-stable class of semisimple elements $\mathcal{C}$ such that $\delta_{\ad}(\mathcal{C}) = \mathcal{C}_{\ad}$.
\end{prop}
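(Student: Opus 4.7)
The plan is to apply Lang--Steinberg to $\bG_{\ad}$ acting on itself by conjugation to produce $t \in \mathcal{C}_{\ad}^F$, and then to fix a preimage $\tilde{t} \in \bG$ of $t$. Since $\delta_{\ad}$ is defined over $\mathbb{F}_q$ we have $F(\tilde{t}) = \tilde{t}z$ for a unique $z \in Z(\bG)$. The $\bG$-conjugacy class $\mathcal{C}$ of $\tilde{t}$ clearly satisfies $\delta_{\ad}(\mathcal{C}) = \mathcal{C}_{\ad}$, and $\mathcal{C}$ is $F$-stable exactly when $F(\tilde{t}) = \tilde{t}z$ is $\bG$-conjugate to $\tilde{t}$. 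The analogue in our setting of the Bonnaf\'{e} map of \cref{lem:bonn-centraliser-hom} (with the roles of $\bG$ and $\bG^{\star}$ interchanged) translates this into the condition $z \in A(t)$, where $A(t) \leqslant Z(\bG)$ denotes the image of the commutator homomorphism $C_{\bG_{\ad}}(t) \to Z(\bG)$, $\delta_{\ad}(x) \mapsto [x,\tilde{t}]$. Once an $F$-stable such class $\mathcal{C}$ has been produced, a second application of Lang--Steinberg to $\bG$ acting on $\mathcal{C}$ by conjugation supplies an $F$-fixed element and so finishes the proof.

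Replacing $\tilde{t}$ by $\tilde{t}z_0$ for $z_0 \in Z(\bG)$ modifies $z$ by the Lang cocycle $z_0^{-1}F(z_0)$, so the obstruction to arranging $z \in A(t)$ is the image of $z$ in the finite abelian group $Z(\bG)/A(t)(F-1)Z(\bG)$, and a short check shows this class depends only on $\mathcal{C}_{\ad}$. To verify its vanishing I would use Bonnaf\'{e}'s explicit representative $t_{\Sigma} = \imath_{\bT_{\ad}}(y_{\Sigma})$ from \cref{thm:bonnafe}. Since $\bG$ is simply connected, $\widecheck{X}(\bT_0) = \mathbb{Z}\widecheck{\Phi}$, so writing each fundamental coweight $\widecheck{\varpi}_{\alpha}$ as a rational combination of the simple coroots produces a natural lift $\tilde{t}_{\Sigma} \in \bT_0$. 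The action of $F = F_r \circ \tau$ on $\tilde{t}_{\Sigma}$ is then computed from the tensor formalism exactly as in the proof of \cref{prop:F-stable-classical-quasi-adjoint}, and the Weyl element $w$ conjugating $F(t_{\Sigma})$ back to $t_{\Sigma}$ in $\bG_{\ad}$ lifts to an element of $N_{\bG}(\bT_0)$ which conjugates $F(\tilde{t}_{\Sigma})$ to $\tilde{t}_{\Sigma} \cdot a$ for some $a \in A(t_{\Sigma})$, giving the required inclusion.

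The main obstacle is the case-analysis in types $\D_n$ and $\E_6$, where $A(t_{\Sigma})$ may be a proper subgroup of $Z(\bG)$. There one must track the identification $\mathcal{A}_{\Sigma} \cong A(t_{\Sigma})$ through the isomorphism $\mathcal{A} \cong Z(\bG_{\simc})$ of \cref{pa:iso-A-Z(G)} in order to confirm that the residual central twist lies in $A(t_{\Sigma})(F-1)Z(\bG)$. For the exceptional types a direct CHEVIE computation along the lines of the one used before \cref{prop:F-stable-classical-quasi-adjoint} handles the remaining cases uniformly, leaving only the classical types to be handled by the explicit matrix realisations employed in the proof of \cref{prop:F-stable-classical-quasi-adjoint}.
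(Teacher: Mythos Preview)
Your proposal is correct and follows essentially the same strategy as the paper: pick an $F$-fixed point $t \in \mathcal{C}_{\ad}$, lift it, observe $F(\tilde t) = \tilde t z$ with $z \in Z(\bG)$, and invoke Bonnaf\'{e}'s description of $A_{\bG_{\ad}}(t)$ as $\{z \in Z(\bG) \mid \tilde t z \sim \tilde t\}$ to dispose immediately of all $\Sigma$ with $|A_{\bG_{\ad}}(t_\Sigma)| = |Z(\bG)|$. For the remaining $\Sigma$ both you and the paper lift $t_\Sigma$ via the coroot expression of the $\widecheck{\varpi}_\alpha$ and compute the Frobenius action on this lift explicitly.

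The differences are purely presentational. You package the problem as the vanishing of a class in $Z(\bG)/A(t)(F-1)Z(\bG)$ and then aim to show the residual central twist $a$ lies in $A(t_\Sigma)$; the paper skips this abstraction and simply verifies, case by case, that the coroot lift $\hat{y}_\Sigma$ is itself $\bW$-conjugate to $F(\hat{y}_\Sigma)$ (so in effect $a=1$ for the specific Weyl element chosen). This is lighter than what you sketch: after the $|A| = |Z|$ shortcut, only $\Sigma$ with $m_\alpha|\Sigma|=2$ in types $\B$, $\C$, $\D$ and a single $\Sigma$ in $\E_6$ and two in $\E_7$ remain, and in each case the coefficients $r_\alpha$ lie in $\{0,\tfrac12,\tfrac14,\tfrac34\}$ (resp.\ $\tfrac13$, $\tfrac23$), so fixedness or conjugacy by $w_0$ is immediate. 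No CHEVIE is needed, and the explicit matrix model is used only for type $\A_n$, not for the other classical types.
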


\begin{proof}
We keep the notational conventions specified in the proof of \cref{prop:F-stable-classical-quasi-adjoint}.
\begin{itemize}
	\item Assume $\mathcal{C}_{\ad} = \{1\}$ then clearly $\mathcal{C} = \{1\}$ satisfies our conditions.
	
	\item We will deal with many cases by using the following argument inspired by Lusztig and Bonnaf\'{e}, (see \cite[\S 8]{lusztig:1988:reductive-groups-with-a-disconnected-centre} and \cite[\S 2.B]{bonnafe:2005:quasi-isolated}). Let $s \in \mathcal{C}_{\ad}^F$ and choose $\hat{s} \in \bG$ such that $\delta_{\ad}(\hat{s}) = s$. If we can show $\hat{s}$ is conjugate to $F(\hat{s})$ then the class containing $\hat{s}$ is $F$-stable and its image under $\delta_{\ad}$ is $\mathcal{C}_{\ad}$. Note that we have $\delta_{\ad}(F(\hat{s})) = s$ so we must have $F(\hat{s}) = \hat{s}\hat{z}$ for some $\hat{z} \in Z(\bG)$. By \cref{lem:bonn-centraliser-hom} we know $A_{\bG_{\ad}}(s)$ is isomorphic to the group $\{\hat{z} \in Z(\bG) \mid \hat{s}\hat{z}$ is conjugate to $\hat{s}$ in $\bG\}$, hence whenever $|A_{\bG_{\ad}}(s)| = |Z(\bG)|$ we are done.
	
	\item Let $\bG_{\ad}$ be $\PGL_{n+1}(\mathbb{K})$. Consider the matrix $\tilde{s}_{\Sigma} = I_{(n+1)/d} \otimes J_d \in \GL_{n+1}(\mathbb{K})$ specified in the proof of \cref{prop:F-stable-classical-quasi-adjoint}; then we already know that this lies in an $F$-stable conjugacy class of $\GL_{n+1}(\mathbb{K})$. We have $\det(\tilde{s}_{\Sigma}) = \pm 1$ so it is clear that $\hat{s}_{\Sigma} = \det(\tilde{s}_{\Sigma})\tilde{s}_{\Sigma} \in \SL_{n+1}(\mathbb{K})$, which is a simply connected group of type $\A_n$. Applying the Frobenius we see that $F(\hat{s}_{\Sigma}) = \det(\tilde{s}_{\Sigma})F(\tilde{s}_{\Sigma})$ but we know $F(\tilde{s}_{\Sigma})$ and $\tilde{s}_{\Sigma}$ are conjugate by an element of $\bW$ so clearly $F(\hat{s}_{\Sigma})$ and $\hat{s}_{\Sigma}$ are conjugate. Therefore we can take $\mathcal{C}$ to be the conjugacy class containing $\hat{s}_{\Sigma}$.
\end{itemize}

To prove the remaining cases we will consider the following argument. Recall that we have $\widecheck{X}(\bT_0) \subseteq \widecheck{X}(\bT_{\ad})$ and $\widecheck{X}(\bT_0)$ can be identified with the coroot lattice $\mathbb{Z}\widecheck{\Phi}$. Consider the element $y_{\Sigma} \in (\mathbb{Q/Z})_{p'}\otimes\widecheck{X}(\bT_{\ad})$ then we can express this as a sum of the simple coroots. We then have an element $\hat{y}_{\Sigma} = \sum_{\alpha \in \Delta} r_{\alpha}\otimes\widecheck{\alpha}$ for some $r_{\alpha} \in (\mathbb{Q/Z})_{p'}$, which gives us a representative $\imath_{\bT_0}(\hat{y}_{\Sigma})$ of the preimage $\delta_{\ad}^{-1}(\imath_{\bT_{\ad}}(y_{\Sigma}))$. We now argue that $\hat{y}_{\Sigma}$ is conjugate to $F(\hat{y}_{\Sigma})$ so we can take $\mathcal{C}$ to be the class containing $\hat{y}_{\Sigma}$.

\begin{itemize}
	\item Let $\bG_{\ad}$ be of type $\B_n$, $\C_n$ or $\D_n$ then we need only consider the case where $m_{\alpha}|\Sigma| = 2$. Using the information in \cite[Plate II-IV(VI)]{bourbaki:2002:lie-groups-chap-4-6} we see that $r_{\alpha}$ is either 1, $\frac{1}{2}$, $\frac{1}{4}$ or $\frac{3}{4}$ for all $\alpha \in \Delta$. Furthermore in the case of type $\D_{n-1}$ we have $r_{\alpha_{n-1}} = r_{\alpha_n}$ if $\Sigma \neq \{\alpha_0,\alpha_{n-1}\}$ or $\{\alpha_0,\alpha_n\}$. By the arguments used in the proof of \cref{prop:F-stable-classical-quasi-adjoint} it is then clear that $\hat{y}_{\Sigma}$ and $F(\hat{y}_{\Sigma})$ are conjugate, (note that we also have $w_0$ acts as $-1$ in the case of type $\B_n$).
	
	\item Let $\bG_{\ad}$ be of type $\E_6$. We need only consider the case where $\Sigma = \{\alpha_2\}$. Using the information in \cite[Plate V(VI)]{bourbaki:2002:lie-groups-chap-4-6} we see
	\begin{equation*}
	\hat{y}_{\{\alpha_2\}} = \frac{1}{2}\otimes\widecheck{\alpha}_1 + \frac{1}{2}\otimes\widecheck{\alpha}_4 + \frac{1}{2}\otimes\widecheck{\alpha}_6.
	\end{equation*}
	This is clearly fixed by $F$.
	
	\item Let $\bG_{\ad}$ be of type $\E_7$. We need only consider the cases where $\Sigma = \{\alpha_1\}$ or $\{\alpha_3\}$. Using the information in \cite[Plate VI(VI)]{bourbaki:2002:lie-groups-chap-4-6} we see that
	\begin{align*}
	\hat{y}_{\{\alpha_1\}} &= \frac{1}{2}\otimes\widecheck{\alpha}_3 + \frac{1}{2}\otimes\widecheck{\alpha}_5 + \frac{1}{2}\otimes\widecheck{\alpha}_7,\\
	\hat{y}_{\{\alpha_3\}} &= \frac{1}{3}\otimes\widecheck{\alpha}_2 + \frac{2}{3}\otimes\widecheck{\alpha}_4 + \frac{1}{3}\otimes\widecheck{\alpha}_6 + \frac{2}{3}\otimes\widecheck{\alpha}_7.
	\end{align*}
	It is clear that the element $\hat{y}_{\{\alpha_1\}}$ is $F$-fixed and if $q \equiv 1 \pmod{3}$ then $\hat{y}_{\{\alpha_3\}}$ is $F$-fixed. If $q \equiv 2 \pmod{3}$ then $F(\hat{y}_{\{\alpha_3\}}) = -\hat{y}_{\{\alpha_3\}}$ but by \cite[Plates VI(XI)]{bourbaki:2002:lie-groups-chap-4-6} we see that the longest element $w_0 \in \bW$ acts on the coroots by $-1$.
\end{itemize}
\end{proof}

If $\bG$ is of type $\B_n$, $\C_n$ or $\D_n$ it will be useful to know that we can choose class representatives with a specific action of $F$. In particular we record the following corollary for later, which follows immediately from the proofs of \cref{prop:F-stable-classical-quasi-adjoint,prop:F-stable-surjective-class-semisimple}.

\begin{cor}\label{cor:quasi-isolated-action}
Assume $\bG$ is simply connected of type $\B_n$, $\C_n$ or $\D_n$ and $\Sigma \in \mathcal{Q}(\bG_{\ad})$ is such that $m_{\alpha}|\Sigma| = 2$. Then there exists an element $s \in \bT_0$ such that $\delta_{\ad}(s) = t_{\Sigma}$ and either $F(s) = s$ or $F(s) = s^{w_0}$, (where $w_0\in\bW$ is the longest element).
\end{cor}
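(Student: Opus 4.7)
The plan is to set $s := \imath_{\bT_0}(\hat{y}_\Sigma)$, where $\hat{y}_\Sigma = \sum_{\alpha \in \Delta} r_\alpha \otimes \widecheck{\alpha}$ is the explicit lift of $y_\Sigma$ into $(\mathbb{Q/Z})_{p'}\otimes \widecheck{X}(\bT_0)$ constructed in the proof of \cref{prop:F-stable-surjective-class-semisimple}. The relation $\delta_{\ad}(s) = t_\Sigma$ is built into that construction, so the task reduces to analysing the action of $F$ on $s$ and matching it to either the identity or conjugation by $w_0$.

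First I would compute $F(\hat{y}_\Sigma) = \sum_\alpha qr_\alpha \otimes \widecheck{\tau(\alpha)}$. For types $\B_n$ and $\C_n$ the graph component $\tau$ of $F$ is trivial; for $\D_n$ the hypothesis $m_\alpha|\Sigma| = 2$ together with the symmetry $r_{\alpha_{n-1}} = r_{\alpha_n}$ flagged in the proof of \cref{prop:F-stable-surjective-class-semisimple} (which is valid for every $\Sigma$ outside the list excluded by \cref{prop:F-stable-classical-quasi-adjoint}) ensures that $\tau$ fixes $\hat{y}_\Sigma$. The problem therefore reduces to the scalar multiplication $r_\alpha \mapsto qr_\alpha$, with $r_\alpha \in \{0, 1/2, 1/4, 3/4\}$ as determined in the earlier proof. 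Since $q$ is odd, the coefficients $0$ and $1/2$ are always fixed; the coefficients $1/4$ and $3/4$ are fixed when $q\equiv 1\pmod{4}$ and swapped, hence negated, when $q\equiv 3\pmod{4}$. The conclusion of this step is the dichotomy $F(\hat{y}_\Sigma) = \hat{y}_\Sigma$ or $F(\hat{y}_\Sigma) = -\hat{y}_\Sigma$.

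In the first case $F(s) = s$ directly. In the second I would convert the negation into $w_0$-conjugation using the action of $w_0$ on the coroot lattice. For types $\B_n$, $\C_n$, and $\D_n$ with $n$ even, $w_0$ acts as $-1$ on $\widecheck{X}(\bT_0)$ by \cite[Plates II--IV(XI)]{bourbaki:2002:lie-groups-chap-4-6}, so $s^{w_0} = \imath_{\bT_0}(-\hat{y}_\Sigma) = F(s)$. For $\D_n$ with $n$ odd, $w_0$ acts as $-\epsilon$ where $\epsilon$ exchanges $\widecheck{\alpha}_{n-1}$ with $\widecheck{\alpha}_n$ and fixes the rest; the same symmetry $r_{\alpha_{n-1}} = r_{\alpha_n}$ invoked above forces $\hat{y}_\Sigma$ to be $\epsilon$-fixed, so $w_0$ still acts on it by negation and $s^{w_0} = F(s)$.

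The only real bookkeeping concern, rather than a genuine obstacle, is keeping track of when the coefficients $1/4$ and $3/4$ actually arise (they occur for certain $\Sigma$ in type $\B_n$, where $\widecheck{\varpi}_n$ has half-integer coordinates in the coroot basis) and ensuring the exceptional subsets in type $\D$ from \cref{prop:F-stable-classical-quasi-adjoint} are tacitly excluded by the $F$-stability context in which this corollary is applied. Since both the computation of $F(\hat{y}_\Sigma)$ and the list of $\epsilon$-symmetric subsets have already been handled in the proofs of \cref{prop:F-stable-classical-quasi-adjoint,prop:F-stable-surjective-class-semisimple}, the argument amounts to assembling these ingredients.
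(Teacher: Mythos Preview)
Your proposal is correct and matches the paper's own approach exactly: the paper states that the corollary ``follows immediately from the proofs of \cref{prop:F-stable-classical-quasi-adjoint,prop:F-stable-surjective-class-semisimple}'', and what you have written is precisely the assembly of those ingredients. Your observation that the exceptional $\D_n$ subsets are implicitly excluded by the $F$-stability context is also accurate, as the corollary is only invoked later for classes already known to be $F$-stable.
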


\subsection{Remarks on Fixing Class Representatives}
\begin{pa}\label{sec:fixing-semisimple-class-reps}
We assume that $\bG$ is a simple algebraic group. We will now fix representatives, which we assume chosen once and for all, for conjugacy classes surjecting onto quasi-isolated classes. Let $\Sigma \in \mathcal{Q}(\bG_{\simc}^{\star})$ be a set of roots and let $t_{\Sigma} \in \bT_{\simc}^{\star}$ be its corresponding semisimple element. We assume that the class containing $t_{\Sigma}$ is $F^{\star}$-stable. Let $\btG'$ be the derived subgroup of $\btG^{\star}$ and denote by $\delta_{\simc}' : \bG_{\ad}^{\star} \to \btG'$ and $\delta_{\ad}' : \btG' \to \bG_{\simc}^{\star}$ a simply connected cover and adjoint quotient of $\btG'$. By \cref{prop:F-stable-surjective-class-semisimple} we may fix a semisimple element $s_{\simc} \in \bT_{\ad}^{\star}$ such that the conjugacy class of $\bG_{\ad}^{\star}$ containing $s_{\simc}$ is $F^{\star}$-stable and $(\delta_{\ad}'\circ\delta_{\simc}')(s_{\simc}) = t_{\Sigma}$, (this also satisfies the conditions of \cref{cor:quasi-isolated-action}). We then define the semisimple element $\tilde{s} \in \btG^{\star}$ to be the image of $\delta_{\simc}'(s_{\simc})$ under the natural embedding $\btG' \to \btG^{\star}$.

Note that the surjective morphism $\iota^{\star} : \btG^{\star} \to \bG^{\star}$ restricts to an isogeny $\btG' \to \bG^{\star}$ defined over $\mathbb{F}_q$. We define $s \in \bT_0^{\star}$ to be the image of $\tilde{s}$ under $\iota^{\star}$. This element lies in an $F^{\star}$-stable conjugacy class of $\bG^{\star}$ and the image $\delta_{\ad}^{\star}(s)$ in $\bG_{\simc}^{\star}$ will be $t_{\Sigma}$. By defining the representatives in this way we have $s$, $\tilde{s}$ and $t_{\Sigma}$ are all conjugate to their image under the Frobenius by the same element of the Weyl group, (where here we identify the Weyl groups through the appropriate morphisms). In particular we will be able to uniformly describe the automorphism induced by the Frobenius endomorphism on the component groups of their centralisers.
\end{pa}

\section{\texorpdfstring{General Strategy for the Proof of \cref{prop:A}}{General Strategy}}\label{sec:gen-strategy}
\begin{pa}
In the following sections we will carry out the case by case check giving the proof of \cref{prop:A}. In each section we will state results of Lusztig, (from \cite{lusztig:2009:unipotent-classes-and-special-Weyl}), or H\'{e}zard, (from \cite{hezard:2004:thesis}), which provide for each $\mathcal{O} \in \Clu(\bG)^F$ the following information:
\begin{itemize}
	\item a parahoric subgroup $\bW' \leqslant \bW$, (i.e.\ a subgroup conjugate to $\langle \mathbb{I} \rangle$ for some $\mathbb{I} \subseteq \mathbb{S}_0$),
	\item a special character $\chi \in \Irr(\bW')$ such that $j_{\bW'}^{\bW}(\chi) = \rho(\mathcal{O})$ and $f_{\chi} = |A_{\btG}(u)|$.
\end{itemize}
Here $f_{\chi}$\index{fchi@$f_{\chi}$} is the value defined in \cite[1.1]{lusztig:2009:unipotent-classes-and-special-Weyl} in terms of the generic degree polynomial of the generic Hecke algebra representation corresponding to $\chi$. Actually in \cite{hezard:2004:thesis} H\'{e}zard phrases things in terms of the order of a small finite group attached to the family containing $\chi$ but one easily relates this to $f_{\chi}$ using \cite[4.14.2]{lusztig:1984:characters-of-reductive-groups}. Using the results from \cref{sec:quasi-isolated} we will show that there exists an element $\tilde{s} \in \btT_0^{\star}$ such that $\bW^{\star}(\tilde{s})$ is identified with $\bW'$ under the anti-isomorphism $\bW \to \bW^{\star}$. Furthermore we will show that $\chi$ is stable under $F_{\tilde{s}}^{\star}$ so that $(\tilde{s},\bW^{\star}(\tilde{\mathcal{F}})) \in \mathcal{T}_{\btG}$.
\end{pa}

\begin{rem}
Note that in \cite{lusztig:2009:unipotent-classes-and-special-Weyl} Lusztig works with algebraic groups defined over $\mathbb{C}$. However Lusztig remarks in \cite[1.8]{lusztig:2009:unipotent-classes-and-special-Weyl} that his results also hold for fields of very good characteristic. In fact, after replacing the description of $\tilde{\mathbf{z}}_{C}$ in \cite[3.2]{lusztig:2009:unipotent-classes-and-special-Weyl} with that given in \cite[10.3]{lusztig:1984:intersection-cohomology-complexes} one easily sees that these arguments work in type $\A_n$ in all characteristics, (see also \cite[\S5.1]{taylor:2012:thesis}). Hence Lusztig's results hold in good characteristic.
\end{rem}

\begin{pa}
Let $\bW^{\star}(\tilde{s}) = \bW_1^{\star} \times \cdots \times \bW_r^{\star}$ be a decomposition into irreducible Weyl groups, then we have a corresponding decomposition of the family $\bW^{\star}(\mathcal{\tilde{F}}) = \bW^{\star}(\tilde{\mathcal{F}}_1) \boxtimes \cdots \boxtimes \bW^{\star}(\mathcal{\tilde{F}}_r)$ determined by $\chi = \chi_1\boxtimes\cdots\boxtimes\chi_r$. The family $\mathcal{F}$ of unipotent characters of $C_{G^{\star}}(s)^{\circ}$ in bijective correspondence with the family $\tilde{\mathcal{F}}$ will then decompose as a corresponding product of families $\mathcal{F} = \mathcal{F}_1 \boxtimes \cdots \boxtimes \mathcal{F}_r$. We choose unipotent characters $\psi_i \in \mathcal{F}_i$ such that $n_{\psi_i} = f_{\chi_i}$ for each $1 \leqslant i \leqslant r$, (note that such characters always exist by \cite[4.26.3]{lusztig:1984:characters-of-reductive-groups}). We also make the extra assumption that if we have an isomorphism $\varphi : \bW_i^{\star} \to \bW_j^{\star}$ for some $i \neq j$ such that $\chi_i = \chi_j\circ\varphi$ then we take $\psi_i = \psi_j$, (i.e.\ the unipotent characters are parameterised in the same way). Take $\psi$ to be the product $\psi_1 \boxtimes \cdots \boxtimes \psi_r$ then our goal now is to show that the triple $(\tilde{s},\bW^{\star}(\tilde{\mathcal{F}}),\psi)$ satisfies \crefrange{P1}{P5}. Firstly as $n_{\psi} = n_{\psi_1}\cdots n_{\psi_r} = f_{\chi_1}\cdots f_{\chi_r} = f_{\chi} = |A_{\btG}(u)|$ we have \cref{P1} holds by the results of Lusztig and H\'{e}zard. Clearly \cref{P3} holds so this leaves us with \cref{P2,P4,P5}.
\end{pa}

\begin{pa}
Assume $\bG$ is of type $\B_n$, $\C_n$ or $\D_n$ and $\mathcal{O} \in \Clu(\bG)^F$ is such that $A_{\bG}(u)$ is abelian. From the details given in the following sections it will be clear that the chosen pair $(\tilde{s},\bW^{\star}(\tilde{\mathcal{F}}))$ satisfies the following condition: the structure of the centraliser $C_{\btG}(\tilde{s})$ is determined by a set $\Sigma$ satisfying $m_{\alpha}|\Sigma| = 2$ (see \cref{tab:quasi-isolated-classical}) -- or in other words $\tilde{s}$ is an involution. When $\tilde{s}$ is isolated then the validity of \cref{P4} follows from \cite[Proposition 2.3]{geck-hezard:2008:unipotent-support} because \cref{P1,P3} hold. However checking the details in \cite[Chapter 3]{hezard:2004:thesis} one sees that a slightly stronger result holds, namely that this is true whenever $\tilde{s}$ is an involution. We will see that under these assumptions $\tilde{s}$ is chosen to be an involution, hence we will only need to check \cref{P4} in type $\A_n$ and the exceptional types. This means that we will mainly concern ourselves with \cref{P2,P5}.
\end{pa}

\begin{pa}
To make stating the results of Lusztig slightly more convenient we adopt the following convention, which we maintain until the end. Consider a subset $\mathbb{I} \subseteq \mathbb{S}_0$ then we have a corresponding parahoric subgroup $\bW_{\mathbb{I}} \leqslant \bW$ generated by $\mathbb{I}$. Let $\rho \in \Irr(\bW_{\mathbb{I}})$ be an irreducible character of $\bW_{\mathbb{I}}$ and let $\mathcal{A}_{\mathbb{I}} \leqslant \mathcal{A}$ be the subgroup of elements normalising $\bW_{\mathbb{I}}$. We will write $\Stab_{\mathcal{A}}(\rho)$\index{StabA@$\Stab_{\mathcal{A}}(\rho)$} to denote the subgroup of $\mathcal{A}_{\mathbb{I}}$ which stabilises $\rho$ under the natural conjugation action of $\mathcal{A}_{\mathbb{I}}$ on $\bW_{\mathbb{I}}$.
\end{pa}

\section{\texorpdfstring{Type $\A_n$ ($n \geqslant 1$)}{Type A}}\label{sec:proof-prop:A-type-A}
\begin{pa}
Let $d$ be a divisor of $n+1$ then we denote by $\bW(\times^d{\A_{(n+1-d)/d}})$ the parabolic subgroup of $\bW$ generated by the reflections $\mathbb{S}_0\setminus\{s_{j(n+1)/d} \mid 0 \leqslant j \leqslant d-1\}$.
\end{pa}

\begin{prop}[Lusztig, {\cite[3.2]{lusztig:2009:unipotent-classes-and-special-Weyl}}]\label{prop:lusztig-typeA}
Let $\mathcal{O} \in \Clu(\bG)^F$ be a unipotent class with class representative $u \in \mathcal{O}^F$ and denote by $d$ the order of $|A_{\bG}(u)|$. There exists a special character $[\Lambda] \in \Irr(\bW(\A_{(n+1-d)/d}))$ such that
\begin{itemize}
	\item $f_{\boxtimes^d[\Lambda]} = |A_{\btG}(u)|$
	\item $|\Stab_{\mathcal{A}}(\boxtimes^d [\Lambda])| = |Z_{\bG}(u)|$
	\item $j_{\bW(\times^d\A_{(n+1-d)/d})}^{\bW}(\boxtimes^d[\Lambda]) = \rho(\mathcal{O})$
\end{itemize}
\end{prop}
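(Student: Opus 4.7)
My plan is to exhibit the character $[\Lambda]$ explicitly from the partition parameterising $\mathcal{O}$. Write $\mathcal{O} = \mathcal{O}_\lambda$ with $\lambda = (\lambda_1, \lambda_2, \ldots) \vdash n+1$. By the description of the component group in type $\A_n$ recalled in \cref{sec:comp-orders-type-A} (following \cite[\S10.3]{lusztig:1984:intersection-cohomology-complexes}), $d$ divides $|A_{\bG_{\simc}}(u_{\simc})| = \gcd(\lambda_1, \lambda_2, \ldots)$, so $d$ divides every $\lambda_i$ and $\mu := \lambda/d$ is a well-defined partition of $(n+1)/d$. I would take $[\Lambda]$ to be the irreducible character $\chi^\mu$ of $\bW(\A_{(n+1-d)/d}) \cong S_{(n+1)/d}$, which is special because every irreducible character of a Weyl group of type A is special.

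Property (i) is then immediate. The regular embedding realises $\btG$ as a product of general linear factors, for which unipotent centralisers are connected, so $|A_{\btG}(u)| = 1$; on the other hand, in type A all Lusztig families are singletons, whence $f_\chi = 1$ for every irreducible $\chi$ and in particular $f_{\boxtimes^d[\Lambda]} = 1$. Property (iii) reduces to the standard Lusztig--Macdonald--Spaltenstein induction computation in symmetric groups: using \cite[\S5.5]{geck-pfeiffer:2000:characters-of-finite-coxeter-groups} one shows

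\begin{equation*}
j_{(S_{(n+1)/d})^d}^{S_{n+1}}\bigl(\chi^\mu \boxtimes \cdots \boxtimes \chi^\mu\bigr) = \chi^\lambda,
\end{equation*}

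and the right-hand side is precisely $\rho(\mathcal{O})$ under the Springer correspondence described in \cite[\S2]{geck-malle:2000:existence-of-a-unipotent-support}.

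For Property (ii), the decisive observation concerns the structure of $\mathcal{A}_{\mathbb{I}}$ for $\mathbb{I} = \mathbb{S}_0 \setminus \{s_{j(n+1)/d} \mid 0 \leqslant j \leqslant d-1\}$. Since $\mathcal{A} \cong \mathbb{Z}/(n+1)\mathbb{Z}$ acts on the extended Dynkin diagram by rotation, the stabiliser of the equally spaced subset $\{\alpha_{j(n+1)/d}\}$ is the cyclic subgroup of order $d$ generated by rotation by $(n+1)/d$. Such a rotation cyclically permutes the $d$ identical tensor factors, hence stabilises $\boxtimes^d[\Lambda]$, giving $|\Stab_{\mathcal{A}}(\boxtimes^d[\Lambda])| = d$. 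Finally, $A_{\bG}(u) = Z_{\bG}(u)$ in type $\A_n$ (noted in \cref{sec:comp-orders-type-A}), so $|Z_{\bG}(u)| = d$ as required.

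The only delicate point is reconciling partition conventions so that the $j$-induction formula yields $\chi^\lambda$ rather than its transpose $\chi^{\lambda^*}$; once the conventions of \cite[Chapter 4]{lusztig:1984:characters-of-reductive-groups} and \cite[\S2]{geck-malle:2000:existence-of-a-unipotent-support} are aligned, the rest is bookkeeping with the machinery of \cref{sec:quasi-isolated,sec:gen-strategy}.
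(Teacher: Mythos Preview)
The paper does not prove this proposition; it is stated with attribution to Lusztig \cite[3.2]{lusztig:2009:unipotent-classes-and-special-Weyl} and then used as input in the paragraph that follows. So there is no proof in the paper to compare against.

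That said, your reconstruction is essentially the argument Lusztig gives and is correct. The explicit choice $\mu = \lambda/d$ and $[\Lambda] = \chi^{\mu}$ is the only natural candidate, and your verifications of the three items are sound: (i) is trivial since $A_{\btG}(u)$ is trivial and type $\A$ families are singletons; (ii) uses exactly the right observation that $\mathcal{A}_{\mathbb{I}}$ is the cyclic group of rotations preserving the equally spaced set, all of which fix a tensor power of a single character, together with $A_{\bG}(u) = Z_{\bG}(u)$; (iii) is the standard $j$-induction computation in symmetric groups. Your caveat about partition versus dual-partition conventions is the only place one has to be careful, and you have flagged it appropriately.
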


\begin{pa}
By the results in \cref{sec:quasi-isolated} there exists a semisimple element $\tilde{s}\in\btT_0^{\star}$ lying in an $F^{\star}$-stable conjugacy class such that $\bW(\tilde{s}) = \bW(\times^d\A_{(n+1-d)/d})$. We have $(\tilde{s},\bW^{\star}(\mathcal{\tilde{F}})) \in \mathcal{T}_{\btG}$ because $\boxtimes^d[\Lambda]$ is invariant under all graph automorphisms. We now prove \cref{P2}. By \cref{prop:lusztig-typeA} and the construction of $\psi$ we have $\psi$ is invariant under all possible graph automorphisms hence $\Stab_{A_{G^{\star}}(s)}(\psi) = A_{G^{\star}}(s)$. From \cref{sec:comp-orders-type-A} we see that $|A_{\bG^{\star}}(s)| = |Z_{\bG}(u)|$ hence \cref{P2} holds by \cref{lem:comp-group-fixed-point}. Finally \cref{P4} holds by combining \cref{prop:lusztig-j-ind} together with the fact that $b(\rho) = d(\rho)$ for all characters $\rho \in \Irr(\bW)$ and \cref{P5} holds trivially as $\mathcal{F}$ contains only one character.
\end{pa}

\section{\texorpdfstring{Type $\B_n$ ($n \geqslant 2$)}{Type B}}
\begin{pa}
Let $\mathbb{V}_n^1 = \{(\kappa,\mu,\nu) \mid \kappa+\mu+\nu = n$ and $\mu=0\} \subset \mathbb{N}^3$ and $\mathbb{V}_n^2 = \{(\kappa,\mu,\nu) \mid \kappa+\mu+\nu = n$ and $\kappa = \nu\} \subset \mathbb{N}^3$. If $(\kappa,\mu,\nu) \in \mathbb{V}_n^1$ then we denote by $\bW(\C_{\kappa}\A_{\mu}\C_{\nu}) = \bW(\C_{\kappa}\C_{\nu})$ the parahoric subgroup generated by the reflections $\mathbb{S}_0\setminus\{s_{\kappa}\}$. If $(\kappa,\mu,\nu) \in \mathbb{V}_n^2$ then we denote by $\bW(\C_{\kappa}\A_{\mu}\C_{\nu})$ the parahoric subgroup of $\bW$ generated by the reflections $\mathbb{S}_0\setminus\{s_{\mu},s_{n-\mu}\}$.
\end{pa}

\begin{prop}[H\'{e}zard, {\cite[\S 4.2.2]{hezard:2004:thesis}}]\label{prop:hezard-typeB}
Assume $\bG$ is adjoint and let $\mathcal{O} \in \Clu(\bG)^F$ be a unipotent class with class representative $u \in \mathcal{O}^F$. There exists a triple $(\kappa,\mu,\nu) \in \mathbb{V}_n^1$ and a special irreducible character $[\Lambda_1] \boxtimes [\Lambda_2] \in \Irr(\bW(\C_{\kappa}\C_{\nu}))$ such that:
\begin{itemize}
	\item $f_{[\Lambda_1]\boxtimes [\Lambda_2]} = |A_{\btG}(u)|$,
	\item $j_{\bW(\C_{\kappa}\C_{\nu})}^{\bW}([\Lambda_1] \boxtimes [\Lambda_2]) = \rho(\mathcal{O})$.
\end{itemize}
\end{prop}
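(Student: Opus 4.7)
The plan is to exhibit the required data $((\kappa,0,\nu),[\Lambda_1]\boxtimes[\Lambda_2])$ by a direct combinatorial procedure on the partition $\lambda\vdash 2n+1$ parameterising $\mathcal{O}$. First I would invoke the explicit description of the Springer correspondence for type $\B_n$ recalled in \cref{pa:classes-and-chars}, following \cite[\S2]{geck-malle:2000:existence-of-a-unipotent-support}, so as to write $\rho(\mathcal{O})=[\Lambda]$ as a symbol of rank $n$ and defect $1$. By Spaltenstein this symbol is automatically special, and its family $\mathcal{F}$ in $\Irr(\bW)$ has an associated small group in the sense of \cite[\S4]{lusztig:1984:characters-of-reductive-groups} whose order is precisely $|A_{\bG}(u)|$; since $\bG$ is adjoint and $p$ is good, this also equals $|A_{\btG}(u)|$.

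Next I would construct the split. Writing $[\Lambda]=[A;B]$, extract the ``singleton'' entries of $[\Lambda]$ (those not matched between the two rows in the standard way), which together control both the family $\mathcal{F}$ and the order of its associated small group. Define $\kappa$ so that the singletons concentrate in a symbol $[\Lambda_1]$ of rank $\kappa$, and set $\nu=n-\kappa$ with $[\Lambda_2]$ the symbol built from the remaining matched entries. By design both symbols are special, and a direct inspection of Lusztig's parametrisation of families in type $\B$/$\C$ shows $f_{[\Lambda_1]\boxtimes[\Lambda_2]} = f_{[\Lambda_1]}\cdot f_{[\Lambda_2]} = |A_{\btG}(u)|$, establishing the first bullet.

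To establish the second bullet I would combine two ingredients. First, the combinatorial formula for $j$-induction in type $\B_n$ from a parahoric subgroup of type $\C_\kappa\times\C_\nu$ operates on symbols by a standard ``shift-and-merge'' rule; applied to $[\Lambda_1]\boxtimes[\Lambda_2]$ as constructed, this rule returns exactly $[\Lambda]$. Second, an invariance-of-$b$-invariant argument, together with the fact that both $[\Lambda_1]$ and $[\Lambda_2]$ are special, forces the output of $j$-induction to be the unique special constituent of the correct $b$-value in the image, namely $\rho(\mathcal{O})$.

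The main obstacle will be pinning down the precise combinatorics of the split $[\Lambda]\leftrightarrow([\Lambda_1],[\Lambda_2])$. One must verify that the ranks of the two pieces have the correct parities, that both symbols have defect $1$, and that the shift-and-merge operation really does assemble the two pieces back into $[\Lambda]$ rather than into some other symbol of the same $b$-invariant. This is precisely the explicit, somewhat intricate case-by-case analysis carried out in \cite[\S 4.2.2]{hezard:2004:thesis}; once this bookkeeping is in place, the remaining verifications reduce to routine manipulations of generic degrees.
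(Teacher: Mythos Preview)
The paper does not supply its own proof of this proposition: it is stated as a result of H\'{e}zard and attributed directly to \cite[\S 4.2.2]{hezard:2004:thesis}, with no argument given in the present article beyond that citation. Your proposal is therefore not being compared against an in-text proof but against a bare reference.

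That said, your outline is a fair high-level description of the kind of combinatorial argument H\'{e}zard carries out: pass from the partition $\lambda\vdash 2n+1$ to the special symbol $[\Lambda]=\rho(\mathcal{O})$ via the Springer correspondence, split $[\Lambda]$ into two rank-$\kappa$ and rank-$\nu$ pieces governed by the ``singles'' controlling the family, and recover $[\Lambda]$ by the symbol combinatorics of $j$-induction from $\bW(\C_\kappa\C_\nu)$. You correctly flag that the delicate part is making the split precise (defects, ranks, and that the merge genuinely reproduces $[\Lambda]$), and you end by deferring to the same source the paper cites. In that sense your proposal and the paper are in agreement: both point to H\'{e}zard for the actual verification. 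If you want to turn this into a self-contained proof rather than a sketch, you would need to write down the explicit splitting map on symbols and check the two bullets symbol-by-symbol, which is exactly the case analysis in \cite[\S 4.2.2]{hezard:2004:thesis}.
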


\begin{prop}[Lusztig, {\cite[4.9 - 4.12]{lusztig:2009:unipotent-classes-and-special-Weyl}}]\label{prop:lusztig-typeB}
Assume $\bG$ is simply connected and let $\mathcal{O} \in \Clu(\bG)^F$ be a unipotent class with class representative $u \in \mathcal{O}^F$. There exists a triple $(\kappa,\mu,\nu) \in \mathbb{V}_n^{|Z_{\bG}(u)|}$ and a special irreducible character $[\Lambda_1] \boxtimes [\Lambda_2]\boxtimes [\Lambda_3] \in \Irr(\bW(\C_{\kappa}\A_{\nu}\C_{\mu}))$ such that:
\begin{itemize}
	\item $f_{[\Lambda_1]\boxtimes[\Lambda_2] \boxtimes [\Lambda_3]} = |A_{\btG}(u)|$,
	\item $|\Stab_{\mathcal{A}}([\Lambda_1]\boxtimes [\Lambda_2] \boxtimes [\Lambda_3])| = |Z_{\bG}(u)|$,
	\item $j_{\bW(\C_{\kappa}\A_{\nu}\C_{\mu})}^{\bW}([\Lambda_1] \boxtimes [\Lambda_2] \boxtimes [\Lambda_3]) = \rho(\mathcal{O})$,
	\item if $|Z_{\bG}(u)| = 2$ then $[\Lambda_1] = [\Lambda_3]$.
\end{itemize}
\end{prop}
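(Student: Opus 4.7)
The plan is to bootstrap from H\'{e}zard's treatment of the adjoint case in \cref{prop:hezard-typeB} by splitting on the value of $|Z_{\bG}(u)| \in \{1,2\}$, which is the only ambiguity introduced when passing from $\SO_{2n+1}$ to $\Spin_{2n+1}$. For $\bG$ simply connected of type $\B_n$ we have $Z(\bG) \cong \mathbb{Z}/2$ and the extended-diagram automorphism group $\mathcal{A}$ also has order $2$, generated by the swap $\sigma_1$ of $\alpha_0$ and $\alpha_1$. Whether the non-trivial central element of $\bG$ has trivial or non-trivial image in $A_{\bG}(u)$ is read off from the partition parameterising $\mathcal{O}$ via \cite[\S13.1]{carter:1993:finite-groups-of-lie-type}, so the dichotomy is combinatorially accessible.

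In the easy case $|Z_{\bG}(u)| = 1$, the condition $(\kappa,\mu,\nu) \in \mathbb{V}_n^1$ merely requires $\mu = 0$, and I would simply take the triple $(\kappa,0,\nu)$ and the character $[\Lambda_1]\boxtimes[\Lambda_2]$ produced by \cref{prop:hezard-typeB}; the values $f_{[\Lambda_1]\boxtimes[\Lambda_2]} = |A_{\btG}(u)|$ and $j_{\bW(\C_\kappa \C_\nu)}^{\bW}([\Lambda_1]\boxtimes[\Lambda_2]) = \rho(\mathcal{O})$ are inherited. It then suffices to show $\Stab_{\mathcal{A}}([\Lambda_1]\boxtimes[\Lambda_2]) = 1$. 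The subgroup $\mathcal{A}_{\mathbb{I}}$ normalising $\bW(\C_\kappa\C_\nu)$ is non-trivial only when $\kappa = \nu = n/2$, and in that situation one must verify $[\Lambda_1] \neq [\Lambda_2]$ from the partition data --- this is forced because if $[\Lambda_1] = [\Lambda_2]$ were permitted, the central element would necessarily act trivially, contradicting $|Z_{\bG}(u)| = 1$.

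In the case $|Z_{\bG}(u)| = 2$, the central element fixes $u$'s image in $A_{\bG}(u)$, and by the partition combinatorics in type $\B$ this imposes a symmetry that one can exploit to construct a triple $(\kappa, \mu, \kappa) \in \mathbb{V}_n^2$ together with a character $[\Lambda]\boxtimes[\Lambda']\boxtimes[\Lambda]$ of $\bW(\C_\kappa\A_\mu\C_\kappa)$. The symmetric Levi structure is stable under $\sigma_1$, and the symmetric choice of first and third tensor factors immediately forces $\Stab_{\mathcal{A}}([\Lambda]\boxtimes[\Lambda']\boxtimes[\Lambda]) = \mathcal{A}$, which has order $2 = |Z_{\bG}(u)|$. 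The equality $f_{[\Lambda]\boxtimes[\Lambda']\boxtimes[\Lambda]} = |A_{\btG}(u)|$ follows from the multiplicativity of the $f$-invariant across irreducible factors (\cite[1.1]{lusztig:2009:unipotent-classes-and-special-Weyl}) once the individual $f$-values are matched against the component-group data.

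The main obstacle will be verifying $j_{\bW(\C_\kappa\A_\mu\C_\kappa)}^{\bW}([\Lambda]\boxtimes[\Lambda']\boxtimes[\Lambda]) = \rho(\mathcal{O})$ in the symmetric case. My plan is to use transitivity of $j$-induction through an intermediate parahoric $\bW(\C_{\kappa'}\C_{n-\kappa'})$, ideally one corresponding to H\'{e}zard's triple for the same class $\mathcal{O}$: I first compute $j_{\bW(\C_\kappa\A_\mu\C_\kappa)}^{\bW(\C_{\kappa'}\C_{n-\kappa'})}([\Lambda]\boxtimes[\Lambda']\boxtimes[\Lambda])$ using the combinatorial $j$-induction rules for symbols in types $\B$/$\C$ (e.g.\ from \cite[\S2]{geck-malle:2000:existence-of-a-unipotent-support}) and show this equals H\'{e}zard's $[\Lambda_1]\boxtimes[\Lambda_2]$, after which \cref{prop:hezard-typeB} closes the calculation. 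The delicate combinatorial input is selecting $\kappa, \mu, [\Lambda], [\Lambda']$ so that this intermediate $j$-induction produces the correct $\B$/$\C$-symbol; here one uses that the partition of $\mathcal{O}$ governs the special symbol $[\Lambda_1]\boxtimes[\Lambda_2]$ explicitly, and the symmetric decomposition $(\kappa,\mu,\kappa)$ is precisely the combinatorial ``folding'' that recovers it.
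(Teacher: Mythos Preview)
The paper does not give its own proof of this proposition: it is stated with attribution to Lusztig \cite[4.9--4.12]{lusztig:2009:unipotent-classes-and-special-Weyl} and then used as input for the verification of \cref{prop:A}. Lusztig's argument there is a direct combinatorial construction from the partition parameterising $\mathcal{O}$, producing the triple $(\kappa,\mu,\nu)$ and the symbols $[\Lambda_i]$ explicitly; it does not proceed by bootstrapping from H\'{e}zard's adjoint result. So your proposal is an alternative route rather than a reconstruction of the cited proof.

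As an alternative, your strategy is reasonable in outline but has a genuine gap in the case $|Z_{\bG}(u)|=1$. You want to reuse H\'{e}zard's pair $(\kappa,\nu)$ and symbols $[\Lambda_1],[\Lambda_2]$ and then deduce $\Stab_{\mathcal{A}}([\Lambda_1]\boxtimes[\Lambda_2])=1$. When $\kappa=\nu=n/2$ this requires $[\Lambda_1]\neq[\Lambda_2]$, and your justification --- ``if $[\Lambda_1]=[\Lambda_2]$ were permitted, the central element would necessarily act trivially, contradicting $|Z_{\bG}(u)|=1$'' --- is circular. Whether the central element acts trivially on $A_{\bG}(u)$ is a property of the unipotent class itself, read off from the partition; it is not a consequence of any particular choice of symbols in H\'{e}zard's output. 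H\'{e}zard's \cref{prop:hezard-typeB} makes no claim about $[\Lambda_1]\neq[\Lambda_2]$, so you would have to go back into the partition combinatorics and show directly that for every class with $|Z_{\bG}(u)|=1$ and $\kappa=\nu$, H\'{e}zard's construction yields distinct symbols (or else modify the choice of $(\kappa,\nu)$). That is essentially redoing part of Lusztig's analysis.

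In the case $|Z_{\bG}(u)|=2$, your plan to factor $j$-induction through an intermediate $\bW(\C_{\kappa'}\C_{n-\kappa'})$ and match with H\'{e}zard's data is plausible, but note that $\bW(\C_{\kappa}\A_{\mu}\C_{\kappa})$ is in general not contained in any $\bW(\C_{\kappa'}\C_{n-\kappa'})$ as a parabolic, so transitivity of $j$-induction through such a subgroup is not automatically available; you would need a different intermediate or a direct computation. Again this forces you into the same symbol combinatorics that Lusztig carries out directly.
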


\begin{pa}
By the results in \cref{sec:quasi-isolated} there exists a semisimple element $\tilde{s}$ lying in an $F^{\star}$-stable conjugacy class such that $\bW(\tilde{s}) = \bW(\C_{\kappa}\A_{\mu}\C_{\mu})$ with $(\kappa,\mu,\nu) \in \mathbb{V}_n^1\cup\mathbb{V}_n^2$. Furthermore if $(\kappa,\mu,\nu) \in \mathbb{V}_n^1$ then we may assume that $F^{\star}(\tilde{s}) \in \{\tilde{s},\tilde{s}^{w_0}\}$, (see \cref{cor:quasi-isolated-action}). In each case we see that either $F_{\tilde{s}}^{\star}$ acts trivially on $\bW^{\star}(\tilde{s})$ or the special character is invariant under all possible graph automorphisms, hence $(\tilde{s},\bW^{\star}(\mathcal{\tilde{F}})) \in \mathcal{T}_{\btG}$. This proves \cref{prop:A} when $\bG$ is adjoint. When $\bG$ is simply connected \cref{P2} follows from \cref{prop:lusztig-typeB}, the construction of $\psi$ and \cref{lem:comp-group-fixed-point}.

Let us now consider \cref{P5}. If $|Z_{\bG}(u)| = 1$ then this is clear unless possibly $n \equiv 0 \pmod{2}$ and $\mu = \nu = n/2$, in which case there is a non-trivial action of $A_{G^{\star}}(s)$ exchanging the two components of type $\C_{n/2}$. However \cref{prop:lusztig-typeB} implies that the families containing the characters $[\Lambda_1]$ and $[\Lambda_3]$ are distinct because if they weren't we would have $2 = |\Stab_{\mathcal{A}}([\Lambda_1]\boxtimes[\Lambda_2]\boxtimes[\Lambda_3])| \neq |Z_{\bG}(u)|$. Hence all the characters in $\mathcal{F}$ have a trivial stabiliser so \cref{P5} holds. Note that \cref{P5} doesn't necessarily hold if $|Z_{\bG}(u)|=2$, (take two distinct unipotent characters on the components of type $\C$), but this is one of the exceptions mentioned in \cref{prop:A}.
\end{pa}

\section{\texorpdfstring{Type $\C_n$ ($n \geqslant 3$)}{Type C}}
\begin{pa}
If $(\mu,\nu) \in \mathbb{N}^2$ are such that $\mu+\nu=n$ then we denote by $\bW(\D_{\mu}\B_{\nu})$ the parahoric subgroup of $\bW$ generated by the reflections $\mathbb{S}_0\setminus\{s_{\mu}\}$.
\end{pa}

\begin{prop}[H\'{e}zard, {\cite[\S 4.2.3]{hezard:2004:thesis}}]\label{prop:hez-typeC}
Let $\mathcal{O} \in \Clu(\bG)^F$ be a unipotent class with representative $u \in \mathcal{O}^F$. There exist $\mu,\nu \in \mathbb{N}$, such that $\mu + \nu = n$, and a special irreducible character $[\Lambda_1] \boxtimes [\Lambda_2] \in \Irr(\bW(\D_{\mu}\B_{\nu}))$ such that:
\begin{itemize}
	\item $f_{[\Lambda_1]\boxtimes[\Lambda_2]} = |A_{\btG}(u)|$,
	\item $j_{\bW(\D_{\mu}\B_{\nu})}^{\bW}([\Lambda_1] \boxtimes [\Lambda_2]) = \rho(\mathcal{O})$.
\end{itemize}
Furthermore if $|Z_{\bG_{\simc}}(u_{\simc})|=1$ then we have $\mu=0$ and if $|Z_{\bG_{\simc}}(u_{\simc})| = 2$ then the symbol $[\Lambda_1]$ is non-degenerate.
\end{prop}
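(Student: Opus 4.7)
The plan is to argue combinatorially on the partition $\lambda \vdash 2n$ parameterising $\mathcal{O}$, in which every odd part occurs with even multiplicity (as recalled in \cref{pa:classes-and-chars}). I would first define a canonical splitting $\lambda = \lambda^{(1)} \sqcup \lambda^{(2)}$ where $\lambda^{(1)}$ collects a chosen set of matched pairs of equal parts (to be packaged into a defect-$0$ symbol) and $\lambda^{(2)}$ contains the remaining parts (to be packaged into a defect-$1$ symbol). Setting $2\mu := |\lambda^{(1)}|$ and $\nu := n - \mu$ produces the parahoric subgroup $\bW(\D_\mu \B_\nu) \leqslant \bW$ from which the induction is to be taken.

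Next I would define $[\Lambda_1] \in \Irr(\bW(\D_\mu))$ and $[\Lambda_2] \in \Irr(\bW(\B_\nu))$ as the special characters whose defect-$0$ and defect-$1$ symbols are obtained from $\lambda^{(1)}$ and $\lambda^{(2)}$ by the standard recipe of \cite[\S4]{lusztig:1984:characters-of-reductive-groups}. To verify $j_{\bW(\D_\mu\B_\nu)}^{\bW}([\Lambda_1]\boxtimes[\Lambda_2]) = \rho(\mathcal{O})$ I would combine the combinatorial description of the Springer correspondence for classical types from \cite[\S2]{geck-malle:2000:existence-of-a-unipotent-support} with the truncated induction recipe of \cite[\S5.2]{geck-pfeiffer:2000:characters-of-finite-coxeter-groups}; the identity reduces to checking that interleaving the two symbols in the prescribed manner reproduces the defect-$1$ symbol of $\rho(\mathcal{O})$. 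For the numerical identity $f_{[\Lambda_1]\boxtimes[\Lambda_2]} = |A_{\btG}(u)|$, the formula \cite[4.14.2]{lusztig:1984:characters-of-reductive-groups} evaluates $f_\chi$ on a special character of type $\B$ or $\D$ as a power of $2$ counting the ``singles'' in the associated symbol; under our splitting this count equals the number of distinct even parts of $\lambda$, which is precisely the $2$-logarithm of $|A_{\btG}(u)|$ by the component-group description in \cite[\S13.1]{carter:1993:finite-groups-of-lie-type}.

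The hard part will be calibrating the splitting so that the two conditional clauses at the end of the proposition hold simultaneously. When $|Z_{\bG_{\simc}}(u_{\simc})| = 1$ the partition $\lambda$ has only odd parts (each with even multiplicity), so no genuine defect-$0$ contribution is available and the only admissible decomposition forces $\mu = 0$, returning $[\Lambda_2]$ alone inside $\bW(\B_n)$. The more delicate case is $|Z_{\bG_{\simc}}(u_{\simc})|=2$: a degenerate $[\Lambda_1]$ would correspond to $\lambda^{(1)}$ consisting solely of pairs of equal odd parts giving a symbol with two equal rows, and to exclude this I would prescribe that whenever $\lambda$ contains an even part the splitting absorbs any pair of equal odd parts into $\lambda^{(2)}$ rather than $\lambda^{(1)}$. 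Verifying that this adjusted rule is still compatible with the $j$-induction identity and the $f_\chi$ computation is the central combinatorial task and is the essence of H\'{e}zard's original argument; I expect this bookkeeping, rather than any structural input, to absorb most of the work.
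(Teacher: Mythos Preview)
The paper does not give its own proof of this proposition; it is quoted verbatim from H\'ezard's thesis \cite[\S4.2.3]{hezard:2004:thesis} and used as a black box in the subsequent verification of \cref{prop:A}. Your sketch is a faithful outline of the combinatorial strategy H\'ezard actually employs: split the partition $\lambda$, build special characters on the $\D_\mu$ and $\B_\nu$ factors from the pieces, and check the $j$-induction and $f_\chi$ identities by symbol arithmetic together with the description in \cite[\S2]{geck-malle:2000:existence-of-a-unipotent-support}.

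Two numerical points need tightening before the bookkeeping can go through. First, your equation ``number of distinct even parts of $\lambda = \log_2|A_{\btG}(u)|$'' is off by one: for $\Sp_{2n}$ one has $|A_{\bG_{\simc}}(u)| = 2^a$ with $a$ the number of distinct even parts, but $|A_{\btG}(u)| = |A_{\bG_{\ad}}(u)| = 2^a/|Z_{\bG_{\simc}}(u)|$, which is $2^{a-1}$ whenever $\lambda$ has an even part and $1$ otherwise. The splitting in H\'ezard's argument is calibrated exactly so that passing one suitable piece to the $\D_\mu$ side absorbs this missing factor of $2$; this, and not merely the presence of even parts, is what forces $\mu>0$ precisely when $|Z_{\bG_{\simc}}(u_{\simc})|=2$. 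Second, the description of $f_\chi$ as ``counting the singles'' is too coarse: for a defect-$1$ special symbol with $2d+1$ singles one has $f_\chi = 2^d$, while for a non-degenerate defect-$0$ symbol with $2d$ singles one has $f_\chi = 2^{d-1}$. These half-integer shifts are exactly what makes the product $f_{[\Lambda_1]}f_{[\Lambda_2]}$ land on $2^{a-1}$ rather than $2^a$. None of this changes the architecture of your argument, and you already flag the final paragraph as the place where such calibration happens; but as written the two displayed identities would not match.
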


\begin{pa}
By the results in \cref{sec:quasi-isolated} there exists a semisimple element $\tilde{s}\in\btT_0^{\star}$ such that $F^{\star}(\tilde{s}) \in \{\tilde{s},\tilde{s}^{w_0}\}$, (see \cref{cor:quasi-isolated-action}), and $\bW(\tilde{s}) = \bW(\D_{\mu}\B_{\nu})$. We have $(\tilde{s},\bW^{\star}(\mathcal{\tilde{F}})) \in \mathcal{T}_{\btG}$ as $F_{\tilde{s}}^{\star}$ acts trivially on $\bW^{\star}(\tilde{s})$. If $\bG$ is adjoint then this is all we need to show. Assume now that $\bG$ is simply connected then \cref{P2} follows from \cref{prop:hez-typeC}, the construction of $\psi$ and \cref{lem:comp-group-fixed-point}. Furthermore \cref{P5} clearly holds as \cref{prop:hez-typeC} says that all unipotent characters of the type $\D$ component in $\mathcal{F}$ are non-degenerate.
\end{pa}

\section{\texorpdfstring{Type $\D_n$ ($n \geqslant 4$)}{Type D}}\label{chap:prop-A-type-D}
\begin{rem}
The subgroup of $\bW$ which we denote by $\mathcal{A}$ is denoted by $\Omega$ in \cite{lusztig:2009:unipotent-classes-and-special-Weyl}. We now describe how Lusztig's chosen generators for $\Omega$ can be identified with our chosen generators for $\mathcal{A}$. Firstly, let us identify $\bW$ with a subgroup of the symmetric group on $\{1,\dots,n,n',\dots,1'\}$ as in \cite[5.1]{lusztig:2009:unipotent-classes-and-special-Weyl}. Lusztig fixes two generators $\omega_1$ and $\omega_2$ of $\Omega$ such that: if $n$ is even $\omega_1$ maps $i \mapsto (n+1-i)'$ and $i' \mapsto n+1-i$ for all $1 \leqslant i \leqslant n$, if $n$ is odd then $\omega_1$ maps $i \mapsto (n+1-i)'$ and $i' \mapsto n+1-i$ for all $1 \leqslant i \leqslant n-1$ and maps $n \mapsto 1$ and $n' \mapsto 1'$, finally $\omega_2$ maps $i \mapsto i$ for all $2 \leqslant i \leqslant n-1$ and interchanges $1$ with $1'$ and $n$ with $n'$. From this description it is not difficult to check that $\omega_2$ is identified with $\sigma_1$. Furthermore if $n \equiv 0 \pmod{2}$ then $\omega_1$ is identified with $\sigma_{n-1}$ and if $n \equiv 1\pmod{2}$ then $\omega_1$ is identified with $\sigma_n$. This now removes any difficulty in verifying our statements of Lusztig's results.
\end{rem}

\begin{pa}
Let $\mu,\nu \in \mathbb{N}$ then if $\mu+\nu = n$ and $\mu,\nu\geqslant 2$ we denote by $\bW(\D_{\mu}\D_{\nu})$ the parahoric subgroup generated by the reflections $\mathbb{S}_0\setminus\{s_{\mu}\}$. If $\mu=1$ we denote by $\bW(\D_{\mu}\D_{\nu})$ the parahoric subgroup generated by the reflections $\mathbb{S}_0\setminus\{s_0,s_1\}$, similarly if $\nu=1$ we take the set $\mathbb{S}_0\setminus\{s_{n-1},s_n\}$. If $2\mu+\nu = n$ and $\mu\geqslant 2$ we denote by $\bW(\D_{\mu}\A_{\nu}\D_{\mu})$ the parahoric subgroup generated by the reflections $\mathbb{S}_0\setminus\{s_{\mu},s_{n-\mu}\}$. If $\mu = 1$ then we denote by $\bW(\D_{\mu}\A_{\nu}\D_{\mu})$ the parabolic subgroup generated by the reflections $\mathbb{S}_0\setminus\{s_0,s_1,s_{n-1},s_n\}$. Finally we denote by $\bW(\A_{n-1}^+)$ the parabolic subgroup generated by the reflections $\mathbb{S}_0\setminus\{s_0,s_n\}$ and by $\bW(\A_{n-1}^-)$ the parabolic subgroup generated by the reflections $\mathbb{S}_0\setminus\{s_0,s_{n-1}\}$, (this is in keeping with the notation introduced in \cref{sec:clarification-typeD}).
\end{pa}

\begin{pa}
For the case of type $\D$ it will be useful for us to introduce some more notation for distinguishing unipotent classes. Assume $\mathcal{O} \in \Clu(\bG)$ with class representative $u \in \mathcal{O}$ such that $\mathcal{O}$ is parameterised by a partition $\lambda \vdash 2n$. Let us write $\lambda = (1^{r_1},2^{r_2},3^{r_3},\dots)$ with $r_i = 0$ when $i$ does not occur in $\lambda$ then we define the following two integers:
\begin{align*}
\kappa_{\D}(u) &= \max\{r_i \mid i\text{ is odd}\},\\
\delta_{\D}(u) &= \begin{cases}
1 & \text{if there exists an odd number }i\in\mathbb{N}\text{ such that }r_i \equiv 1\pmod{2},\\
0 & \text{otherwise}.
\end{cases}
\end{align*}
Note that in the notation of \cite{lusztig:2009:unipotent-classes-and-special-Weyl} we have for an element $y_* \in Y_m^n$ corresponding to $\mathcal{O}$ that $\delta_{\D}(u) = \delta_{y_*}$ and $\kappa_{\D}(u) = \max\{|\mathcal{I}| \mid \mathcal{I} \in \mathfrak{J}(y_*)\}$.
\end{pa}

\subsection{The Adjoint and Special Orthogonal Cases}
\begin{pa}\label{pa:triality-D4}
Until otherwise specified we assume that $\bG$ is either adjoint or a special orthogonal group. Let us first deal with the case where $\bG$ is of type $\D_4$ and $\tau$ induces the graph automorphism of order 3. The following table lists the classes $\mathcal{O} \in \Clu(\bG)^F$ by their partition and prescribes for each class the data described in \cref{sec:gen-strategy}. Note that the information for $|A_{\bG}(u)|$ is only for the case when $\bG$ is a special orthogonal group. When $\bG$ is adjoint we have $|A_{\bG}(u)| = |A_{\btG}(u)|$.
\begin{longtable}{>{$}c<{$}>{$}c<{$}>{$}c<{$}>{$}c<{$}>{$}c<{$}}
\toprule
\mathcal{O} & |A_{\btG}(u)| & |A_{\bG}(u)| & \bW^{\star}(\tilde{s}) & \chi \tabularnewline
\midrule
\endhead
\bottomrule
\endfoot
\bottomrule
\endlastfoot
(1,7) & 1 & 2 & \D_3 & [0;3] \tabularnewline
(3,5) & 1 & 2 & \D_3 & [1;2] \tabularnewline
(1^2,3^2) & 2 & 2 & \D_4 & [02;13] \tabularnewline
(1,2^2,3) & 1 & 2 & \D_2\D_2 & [01;12]\boxtimes [01;12] \tabularnewline
(1^4,2^2) & 1 & 1 & \D_4 & [013;123] \tabularnewline
(1^8) & 1 & 1 & \D_4 & [0123;1234]
\end{longtable}

\noindent By the results in \cref{sec:quasi-isolated} there exists a semisimple element $\tilde{s}\in\btT_0^{\star}$ lying in an $F^{\star}$-stable conjugacy class such that $\bW(\tilde{s}) = \bW(\D_4)$, $\bW(\D_3)$ or $\bW(\D_2\D_2)$. Clearly $(\tilde{s},\bW^{\star}(\mathcal{\tilde{F}})) \in \mathcal{T}_{\btG}$ as the character $\chi$ listed above is invariant under all graph automorphisms. If $\bG$ is a special orthogonal group then the non-trivial element of $A_{\bG^{\star}}(s)$ induces the graph automorphism $\sigma_1 \in \mathcal{A}$ therefore as non of the characters are degenerate we see that \cref{P2} holds by the construction of $\psi$ and \cref{lem:comp-group-fixed-point}. \cref{P5} is also clear for the same reason which deals with the triality case.
\end{pa}

\begin{prop}[H\'{e}zard, {\cite[\S 4.2.4]{hezard:2004:thesis}}]\label{prop:hez-typeD}
Assume $\tau$ has order at most 2 and let $\mathcal{O} \in \Clu(\bG)^F$ be a unipotent class with class representative $u \in \mathcal{O}^F$. There exist $\mu$, $\nu \in \mathbb{N}$, such that $\mu+\nu=n$, and a special irreducible character $[\Lambda_1]\boxtimes[\Lambda_2] \in \Irr(\bW(\D_{\mu}\D_{\nu}))$ such that:
\begin{itemize}
	\item $f_{[\Lambda_1]\boxtimes [\Lambda_2]} = |A_{\btG}(u)|$,
	\item $j_{\bW(\D_{\mu}\D_{\nu})}^{\bW}([\Lambda_1]\boxtimes[\Lambda_2]) = \rho(\mathcal{O})$.
\end{itemize}
Furthermore the symbols $[\Lambda_1]$ and $[\Lambda_2]$ are non-degenerate unless $\mathcal{O}$ is a degenerate unipotent class, in which case we have $\mu = 0$ and $[\Lambda_1] = \rho(\mathcal{O})$ is degenerate.
\end{prop}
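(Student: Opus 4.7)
The plan is a case-by-case analysis on the orthogonal partition $\lambda \vdash 2n$ parametrising $\mathcal{O}$. For each such $\lambda$ one exhibits a decomposition $\lambda = \lambda_1 \sqcup \lambda_2$ into orthogonal subpartitions with $\lambda_i \vdash 2\mu_i$, where $\mu_1 + \mu_2 = n$ (and one of the $\mu_i$ may be zero), and takes $[\Lambda_i]$ to be the Springer character $\rho(\mathcal{O}_i) \in \Irr(\bW(\D_{\mu_i}))$ of the unipotent class $\mathcal{O}_i$ of a group of type $\D_{\mu_i}$ parametrised by $\lambda_i$. The decomposition is governed by the odd-part skeleton of $\lambda$, which by the description in \cite[\S13.1]{carter:1993:finite-groups-of-lie-type} controls the order of $A_{\btG}(u)$; one essentially splits off from $\lambda$ one odd block at a time in order to align the two $\D$-factors with a suitable family.

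The Springer-correspondence identity $j_{\bW(\D_\mu\D_\nu)}^{\bW}([\Lambda_1]\boxtimes[\Lambda_2]) = \rho(\mathcal{O})$ then reduces to the geometric statement $\mathcal{O} = \Ind_{\bL}^{\btG}(\mathcal{O}_1 \times \mathcal{O}_2)$, where $\bL$ is the standard Levi of type $\D_\mu\D_\nu$, after which \cref{prop:lusztig-spaltenstein} applies directly. The required combinatorial description of Lusztig--Spaltenstein induction in classical groups (reading off the induced partition from the pair $(\lambda_1,\lambda_2)$) is standard. Checking that the resulting $[\Lambda_1]\boxtimes[\Lambda_2]$ is special, and that in the non-degenerate case both symbols are themselves non-degenerate symbols of defect zero, is then a routine check on symbols using the conventions of \cref{pa:classes-and-chars}.

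The substantive point is the numerical identity $f_{[\Lambda_1]\boxtimes[\Lambda_2]} = |A_{\btG}(u)|$. Using Lusztig's rule \cite[4.14.2]{lusztig:1984:characters-of-reductive-groups} one translates the left-hand side into the order of the small finite group attached to the family containing $[\Lambda_1]\boxtimes[\Lambda_2]$, which factors as a product over the two $\D$-components; the right-hand side is read off from the odd parts of $\lambda$ via \cite[\S13.1]{carter:1993:finite-groups-of-lie-type}. The identity then becomes a combinatorial equality between two functions of $\lambda$, whose verification is precisely the explicit bookkeeping carried out by H\'{e}zard in \cite[\S4.2.4]{hezard:2004:thesis}.

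For the degenerate case, in which every part of $\lambda$ is even, one takes $\mu = 0$, so that $\bW(\D_0\D_n) = \bW$ and $j$-induction is the identity; then $[\Lambda_1]$ must equal $\rho(\mathcal{O})$, which is degenerate by \cref{eq:degen-spring-correspondence}, and the family of $[\Lambda_1]$ is a singleton, forcing $f_{[\Lambda_1]} = 1 = |A_{\btG}(u)|$. The principal obstacle is the bookkeeping in the near-degenerate regime: one must show that whenever $\mathcal{O}$ is \emph{not} degenerate the decomposition $\lambda = \lambda_1 \sqcup \lambda_2$ can be chosen so that neither $\lambda_i$ is a degenerate partition in its ambient $\D_{\mu_i}$. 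This constrains the admissible splittings enough that the choices made by H\'{e}zard, rather than the naive ones, become essential.
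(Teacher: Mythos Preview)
The paper does not supply its own proof of this proposition; it is attributed to H\'{e}zard's thesis and then used as a black box. Your sketch is in the right spirit (decompose $\lambda$, match $f$-values via family sizes, verify the $j$-induction identity), but it contains a structural gap.

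You propose to establish $j_{\bW(\D_\mu\D_\nu)}^{\bW}([\Lambda_1]\boxtimes[\Lambda_2]) = \rho(\mathcal{O})$ by appealing to \cref{prop:lusztig-spaltenstein} for ``the standard Levi $\bL$ of type $\D_\mu\D_\nu$'' and reading off $\mathcal{O} = \Ind_{\bL}^{\btG}(\mathcal{O}_1\times\mathcal{O}_2)$. This step does not go through. For $\mu,\nu\geqslant 2$ the subgroup $\bW(\D_\mu\D_\nu)$ is defined in the paper as the \emph{parahoric} subgroup generated by $\mathbb{S}_0\setminus\{s_\mu\}$, which contains the affine reflection $s_0$; it is not a parabolic subgroup of $\bW$. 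Correspondingly, a simple group of type $\D_n$ has no Levi subgroup with Weyl group of type $\D_\mu\times\D_\nu$: its Levis have type $\D_k\times\A_{i_1}\times\cdots\times\A_{i_r}$. Since \cref{prop:lusztig-spaltenstein} is stated only for genuine Levi subgroups, your geometric reduction via Lusztig--Spaltenstein induction is not available in the cases $\mu,\nu\geqslant 2$, which are precisely the interesting ones.

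H\'{e}zard's argument bypasses this entirely: he works directly with the combinatorics of symbols and of $j$-induction from an arbitrary reflection subgroup (recall $j_{\bW'}^{\bW}$ is defined for any subgroup $\bW'\leqslant\bW$, not only parabolic ones). One writes down the special symbols $[\Lambda_1]$, $[\Lambda_2]$ explicitly from the partition $\lambda$, and verifies both the $j$-induction identity and the equality $f_{[\Lambda_1]\boxtimes[\Lambda_2]}=|A_{\btG}(u)|$ by direct symbol manipulation. In other words, what you describe as a ``routine check on symbols'' after the geometric step is not an afterthought but essentially the whole proof; the Lusztig--Spaltenstein shortcut is not valid here.
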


\begin{pa}
By the results in \cref{sec:quasi-isolated} there exists a semisimple element $\tilde{s}\in\btT_0^{\star}$ such that $F^{\star}(\tilde{s}) \in \{\tilde{s},\tilde{s}^{w_0}\}$, (see \cref{cor:quasi-isolated-action}), and $\bW(\tilde{s}) = \bW(\D_{\mu}\D_{\nu})$. We claim $(\tilde{s},\bW^{\star}(\mathcal{\tilde{F}})) \in \mathcal{T}_{\btG}$, which follows from the following two facts. Firstly $F_{\tilde{s}}^{\star}$ cannot act by exchanging the two components of type $\D$. Secondly a unipotent class $\mathcal{O} \in \Clu(\bG)$ is $F$-stable unless $\mathcal{O}$ is degenerate and $\tau$ is of order 2 in which case it is not $F$-stable, (see for instance \cite[Lemma 2.40]{taylor:2012:thesis}). This together with \cref{prop:hez-typeD} confirms our claim, hence this proves \cref{prop:A} in the adjoint case.

Let us now consider \cref{P2} when $\bG$ is a special orthogonal group. By \cite[\S10.6]{lusztig:1984:intersection-cohomology-complexes} and \cite[\S14.3]{lusztig:1984:intersection-cohomology-complexes} we have $|Z_{\bG}(u)| = 2$ unless $\mathcal{O}$ is degenerate, in which case $|Z_{\bG}(u)| = 1$. If $\mathcal{O}$ is degenerate then \cref{P2} is clear, so let us assume $\mathcal{O}$ is non-degenerate. The non-trivial element of $A_{\bG^{\star}}(s)$ induces the graph automorphism $\sigma_1 \in \mathcal{A}$, therefore $\Stab_{A_{G^{\star}}(s)}(\psi) = A_{G^{\star}}(s)$ and \cref{P2} follows from \cref{lem:comp-group-fixed-point}. Finally, as $[\Lambda_1]$, $[\Lambda_2]$ are not degenerate this is true of all characters in the families $\mathcal{F}_1$ and $\mathcal{F}_2$. In particular the stabiliser of any character in the family $\mathcal{F}$ has the same order as $\Stab_{A_{G^{\star}}(s)}(\psi)$ so \cref{P5} holds which proves \cref{prop:A} when $\bG$ is a special orthogonal group.
\end{pa}

\subsection{The Simply Connected and Half-Spin Cases}
\begin{pa}
We assume from now until the end of this section that $\bG$ is either simply connected or a half-spin group.
\end{pa}

\begin{prop}[Lusztig, {\cite[6.14]{lusztig:2009:unipotent-classes-and-special-Weyl}}]\label{prop:lusztig-typeD-1}
Let $\mathcal{O} \in \Clu(\bG)^F$ be a unipotent class with class representative $u \in \mathcal{O}^F$ such that $\kappa_{\D}(u) \geqslant 2$ and $\delta_{\D}(u) = 1$. There exists $\mu,\nu \in \mathbb{N}$, such that $\mu + \nu = n$, and a special character $[\Lambda_1] \boxtimes [\Lambda_2] \in \Irr(\bW(\D_{\mu}\D_{\nu}))$ such that:
\begin{itemize}
	\item $f_{[\Lambda_1]\boxtimes[\Lambda_2]} = |A_{\btG}(u)|$,
	\item $\Stab_{\mathcal{A}}([\Lambda_1]\boxtimes [\Lambda_2]) = \langle \sigma_1 \rangle$
	\item $j_{\bW(\D_{\mu}\D_{\nu})}^{\bW}([\Lambda_1] \boxtimes [\Lambda_2]) = \rho(\mathcal{O})$.
\end{itemize}
\end{prop}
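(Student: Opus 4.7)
The strategy is to produce the pair $(\mu,\nu)$ and the product character $[\Lambda_1]\boxtimes[\Lambda_2]$ by a direct combinatorial construction on the symbol parametrising $\rho(\mathcal{O})$, and then to verify the three required properties one by one.

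First I would translate the hypotheses into symbol-theoretic data. Writing the partition parametrising $\mathcal{O}$ in exponential form $\lambda = (1^{r_1},2^{r_2},\ldots)$, the condition $\delta_{\D}(u) = 1$ guarantees that $\lambda$ is non-degenerate and so $\rho(\mathcal{O})$ is a single non-degenerate symbol $[A;B]$ of rank $n$ and defect $0$, computed via the algorithm in \cite[\S2]{geck-malle:2000:existence-of-a-unipotent-support}. The assumption $\kappa_{\D}(u) \geqslant 2$ guarantees an odd part of $\lambda$ occurring with multiplicity at least two, which after running through the algorithm forces the existence of an entry appearing simultaneously in both $A$ and $B$. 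This coincidence is the structural feature that makes a non-trivial splitting possible.

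Next I would extract $(\mu,\nu)$ and the factors $[\Lambda_1]$, $[\Lambda_2]$ by ``pulling apart'' the symbol $[A;B]$ at such a common entry, producing two defect-$0$ symbols of ranks $\mu$ and $\nu$ with $\mu + \nu = n$. That the factors can be arranged to be \emph{special} symbols is an elementary but combinatorially delicate check using the characterisation of specialness in terms of the interleaving of the entries of the two rows. Having fixed this data, I would verify the equality $j_{\bW(\D_{\mu}\D_{\nu})}^{\bW}([\Lambda_1]\boxtimes[\Lambda_2]) = \rho(\mathcal{O})$ by combining \cref{prop:lusztig-spaltenstein} with the explicit description of $j$-induction on symbols: the unique character of maximal $b$-invariant occurring in the ordinary induction can be described combinatorially, and a direct computation shows that the recombination of $[\Lambda_1]$ with $[\Lambda_2]$ returns exactly $[A;B]$. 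The identity $f_{[\Lambda_1]\boxtimes[\Lambda_2]} = |A_{\btG}(u)|$ then reduces, via \cite[4.14.2]{lusztig:1984:characters-of-reductive-groups} and the multiplicativity of $f$-values on product families, to a count that matches the description of $|A_{\btG}(u)|$ coming from \cite[\S14.3]{lusztig:1984:intersection-cohomology-complexes}.

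Finally, to determine $\Stab_{\mathcal{A}}([\Lambda_1]\boxtimes[\Lambda_2])$, I would analyse the action of $\mathcal{A}$ componentwise. Since $[\Lambda_1]$ and $[\Lambda_2]$ are both non-degenerate, the element $\sigma_1$ acts as the rank-preserving row-swap on each symbol and so fixes the product; this yields the inclusion $\langle \sigma_1 \rangle \subseteq \Stab_{\mathcal{A}}([\Lambda_1]\boxtimes[\Lambda_2])$. For the reverse, the remaining non-trivial elements of $\mathcal{A}$ either act as a non-trivial graph automorphism on each $\D$-component or interchange the two components (the latter only possible when $\mu = \nu$). Both must be ruled out: the first by verifying that the construction does not yield a symbol invariant under the row-swap beyond $\sigma_1$, and the second by showing that when $\mu = \nu$ the two factors $[\Lambda_1]$ and $[\Lambda_2]$ are necessarily distinct. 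I expect this last injectivity-type claim --- the case $\mu = \nu$ --- to be the principal obstacle, since it amounts to asserting that the splitting of $[A;B]$ performed in the second step is essentially unique, and would have to be established by carefully tracking how the splitting interacts with the positions of the shared entries of $A$ and $B$.
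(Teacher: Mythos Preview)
This proposition is not proved in the paper at all: it is stated as a result of Lusztig with a citation to \cite[6.14]{lusztig:2009:unipotent-classes-and-special-Weyl}, and then used as a black box in the paragraph that follows, where the paper checks that the data supplied by Lusztig yields a valid pair $(\tilde{s},\bW^{\star}(\tilde{\mathcal{F}})) \in \mathcal{T}_{\btG}$ and verifies \cref{P2,P5}. There is consequently no ``paper's own proof'' to compare your proposal against.

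That said, your sketch is broadly in the spirit of Lusztig's argument in \cite{lusztig:2009:unipotent-classes-and-special-Weyl}, which does proceed by an explicit combinatorial construction on the partition data of $\mathcal{O}$ (Lusztig works with his $Y_m^n$ parametrisation rather than symbols directly, but the translation is routine). One point in your stabiliser analysis is imprecise and would need tightening. By the definition of $\Stab_{\mathcal{A}}$ in \cref{sec:gen-strategy}, only elements of $\mathcal{A}$ that \emph{normalise} the parahoric $\bW(\D_\mu\D_\nu)$ enter the picture. When $\mu \neq \nu$ the elements $\sigma_{n-1}$ and $\sigma_n$ do not normalise this subgroup at all, so $\mathcal{A}_{\mathbb{I}} = \langle \sigma_1 \rangle$ and the stabiliser claim reduces entirely to the non-degeneracy of $[\Lambda_1]$ and $[\Lambda_2]$; there is no case of ``non-trivial graph automorphism on each $\D$-component'' to rule out. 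When $\mu = \nu$ both $\sigma_{n-1}$ and $\sigma_n$ normalise and both interchange the two $\D_{n/2}$ factors (one with an additional graph twist on each side, one without), so what must be excluded is that $[\Lambda_1] = [\Lambda_2]$ or that one is the row-swap of the other. You correctly flag $[\Lambda_1] \neq [\Lambda_2]$ as the crux; the row-swap case is handled simultaneously by non-degeneracy.
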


\begin{pa}
Let us note first of all that if $\tau$ is of order 3 then there are no $F$-stable classes satisfying the conditions of \cref{prop:lusztig-typeD-1}, (see \cref{pa:triality-D4}). Hence we may assume that $\tau$ has order at most two. By the results in \cref{sec:quasi-isolated} there exists a semisimple element $\tilde{s} \in \btT_0^{\star}$ such that $F^{\star}(\tilde{s}) \in \{\tilde{s},\tilde{s}^{w_0}\}$, (see \cref{cor:quasi-isolated-action}), and $\bW(\tilde{s}) =\bW(\D_{\mu}\D_{\nu})$. We claim $(\tilde{s},\bW^{\star}(\mathcal{\tilde{F}})) \in \mathcal{T}_{\btG}$, which again follows from the following two facts. Firstly $F_{\tilde{s}}^{\star}$ cannot exchange the two components of type $\D$. Secondly $[\Lambda_1]$ and $[\Lambda_2]$ are not degenerate because, by \cref{prop:lusztig-typeD-1}, they are invariant under the graph automorphism induced by $\sigma_1$.

We now concern ourselves with \cref{P2}, which we must consider for each isomorphism type. Firstly by \cite[\S10.6]{lusztig:1984:intersection-cohomology-complexes} and \cite[\S14.3]{lusztig:1984:intersection-cohomology-complexes} we have
\begin{equation*}
|Z_{\bG}(u)| = \begin{cases}
1 &\text{if }\bG\text{ is a half-spin group},\\
2 &\text{if }\bG\text{ is simply connected}.
\end{cases}
\end{equation*}
If $\bG$ is simply connected then $Z(\bG)$ is not necessarily cyclic, which means we cannot use \cref{lem:comp-group-fixed-point} so we must argue directly. As $\Stab_{\mathcal{A}}([\Lambda_1]\boxtimes [\Lambda_2]) \neq \mathcal{A}$ we must have $[\Lambda_1]\neq[\Lambda_2]$ so $|\Stab_{A_{G^{\star}}(s)}(\psi)| \leqslant 2$ hence we need only show that $A_{\bG^{\star}}(s)^{F_{\tilde{s}}^{\star}} = A_{\bG^{\star}}(s)$ and $Z_{\bG}(u)^F = Z_{\bG}(u)$. If $F$ acts trivially on $Z(\bG)$ then $F^{\star}$ acts trivially on $\Ker(\delta_{\simc}^{\star})$ so everything is fixed. Assume $F$ acts non-trivially on the centre then it acts by exchanging $\hat{z}_{n-1}$, $\hat{z}_n$ hence we will have $Z(\bG)^F = \langle \hat{z}_1 \rangle = \Ker(\delta_{\simc}^{\star})^{F^{\star}}$, (where here we have identified $Z(\bG)$ with $Z(\bG_{\ad}^{\star})$). It is clear that $A_{\bG^{\star}}(s)$ is isomorphic to the subgroup $\langle \hat{z}_1 \rangle$ so $A_{\bG^{\star}}(s)^{F_{\tilde{s}}^{\star}}=A_{\bG^{\star}}(s)$. From the discussion in \cite[\S2.2.4 - pg.\ 63]{taylor:2012:thesis} we have $Z_{\bG}(u)$ is given by $C_{\bG}(u)^{\circ}$ and $\hat{z}_{n-1}C_{\bG}(u)^{\circ} = \hat{z}_nC_{\bG}(u)^{\circ}$ so we also have $Z_{\bG}(u)^F = Z_{\bG}(u)$.

Assume now that $\bG$ is a half-spin group. If $\mu\neq\nu$ then $|A_{\bG^{\star}}(s)|=1$ and the result is clear. If $\mu=\nu$ then $|A_{\bG^{\star}}(s)|=2$ however the graph automorphism induced by the non-trivial element of $A_{\bG^{\star}}(s)$ exchanges the two components of type $\D_{n/2}$. As was mentioned above $[\Lambda_1]\neq[\Lambda_2]$ so $\Stab_{A_{G^{\star}}(s)}(\psi)$ is trivial. Finally, as $[\Lambda_1]$, $[\Lambda_2]$ are not degenerate this is true of all characters in the families $\mathcal{F}_1$ and $\mathcal{F}_2$. In particular the stabiliser of any character in the family $\mathcal{F}$ has the same order as $\Stab_{A_{G^{\star}}(s)}(\psi)$ so \cref{P5} holds.
\end{pa}

\begin{prop}[Lusztig, {\cite[6.11]{lusztig:2009:unipotent-classes-and-special-Weyl}}]\label{prop:lusztig-typeD-2}
Let $\mathcal{O} \in \Clu(\bG)^F$ be a unipotent class with class representative $u \in \mathcal{O}^F$ such that $\kappa_{\D}(u) = 1$ and $\delta_{\D}(u) = 1$. There exist $\mu, \nu \in \mathbb{N}$, such that $2\mu + \nu = n$, and a special character $[\Lambda_1]\boxtimes[\Lambda_2]\boxtimes[\Lambda_1] \in \Irr(\bW(\D_{\mu}\A_{\nu}\D_{\mu}))$ such that:
\begin{itemize}
	\item $f_{[\Lambda_1]\boxtimes[\Lambda_2] \boxtimes [\Lambda_1]} = |A_{\btG}(u)|$,
	\item $\Stab_{\mathcal{A}}([\Lambda_1]\boxtimes [\Lambda_2] \boxtimes [\Lambda_1]) = \mathcal{A}$
	\item $j_{\bW(\D_{\mu}\A_{\nu}\D_{\mu})}^{\bW}([\Lambda_1] \boxtimes [\Lambda_2] \boxtimes [\Lambda_1]) = \rho(\mathcal{O})$.
\end{itemize}
\end{prop}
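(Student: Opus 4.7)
The plan is to imitate the strategy used in the earlier cases of this section and of \cite{lusztig:2009:unipotent-classes-and-special-Weyl}: translate the combinatorial data of the unipotent class $\mathcal{O}$ into the language of symbols, extract from the partition a canonical decomposition matching a Levi of type $\D_{\mu}\A_{\nu}\D_{\mu}$, and then verify the three required properties.

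First I would decode the hypotheses. If $\mathcal{O}$ is parameterised by $\lambda \vdash 2n$, then $\kappa_{\D}(u) = 1$ forces every odd part of $\lambda$ to occur with multiplicity at most $1$, while $\delta_{\D}(u) = 1$ guarantees that at least one odd part is present; since in type $\D$ the even parts already have even multiplicity, $\lambda$ decomposes as $\lambda = \omega \sqcup \eta$, where $\omega$ is a strict partition of some $\nu'$ (the odd parts) and $\eta$ is a partition of $2n - \nu'$ all of whose parts occur with even multiplicity. From this I would set $\nu := \nu' - 1$ and let $\mu$ be such that $2\mu + \nu = n$, so that the even contribution $\eta$ is naturally split symmetrically between two copies of $\D_{\mu}$. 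The character $[\Lambda_2] \in \Irr(\bW(\A_\nu))$ is the (unique) special character of the symmetric group parameterised by the strict partition $\omega$ (or, equivalently, by its $2$-quotient), and $[\Lambda_1] \in \Irr(\bW(\D_{\mu}))$ is the special symbol constructed from $\eta$ via the standard recipe.

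Next I would verify \cref{prop:A}'s three bullet points. The condition $j_{\bW(\D_\mu \A_\nu \D_\mu)}^{\bW}([\Lambda_1] \boxtimes [\Lambda_2] \boxtimes [\Lambda_1]) = \rho(\mathcal{O})$ would be checked by a direct computation on symbols: on each $\D_\mu$ factor, $j$-induction sends $[\Lambda_1]$ to the symbol of $\eta$ split on each side, while on the $\A_\nu$ factor it spreads $[\Lambda_2]$ across the middle; gluing with \cref{prop:lusztig-spaltenstein} and the explicit form of the Springer symbol of $\lambda$ from \cite[\S2]{geck-malle:2000:existence-of-a-unipotent-support} gives the desired equality. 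For $f_{[\Lambda_1]\boxtimes[\Lambda_2]\boxtimes[\Lambda_1]} = |A_{\btG}(u)|$, I would use the multiplicativity of $f_\chi$ under box products, compute the factor corresponding to each component from the explicit generic degree formula in \cite[\S4]{lusztig:1984:characters-of-reductive-groups}, and compare with the known value of $|A_{\btG}(u)|$ (which equals $2^{k-1}$, where $k$ is the number of distinct odd parts of $\lambda$) via \cite[\S10.6]{lusztig:1984:intersection-cohomology-complexes}. Finally, $\Stab_{\mathcal{A}}(\chi) = \mathcal{A}$ holds tautologically for the exchange $\sigma_n$ (or $\sigma_{n-1}$) of the two $\D_\mu$ factors because both are labelled by the same $[\Lambda_1]$, and for $\sigma_1$ it reduces to the fact that the chosen $[\Lambda_1]$ and $[\Lambda_2]$ are invariant under the relevant graph automorphisms of $\D_\mu$ and $\A_\nu$, a property forced by the construction of $[\Lambda_1]$ from a non-degenerate symbol (which here is automatic, as the even-part structure is symmetric but has at least one odd-part deficit absorbed by $\nu$).

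The main obstacle is the $j$-induction identity. Although the strategy is clear, carrying it out requires a careful bookkeeping of symbol entries under the removal of a $\D_\mu \times \A_\nu \times \D_\mu$ Levi and in particular a verification that the resulting symbol has the same rank and defect as the Springer symbol attached to $\lambda$. One should also check that the decomposition $(\mu, \nu)$ produced from $\lambda$ is consistent with $\kappa_{\D}(u)=1$, which is where the hypothesis is really used: more than one odd part with the same value would spoil the symmetry needed for $[\Lambda_1]$ to appear on both $\D_\mu$ factors. Once these checks are in place, the remaining claims follow from the general machinery already set up in the preceding cases.
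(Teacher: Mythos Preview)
This proposition carries no proof in the paper: it is quoted directly from Lusztig \cite[6.11]{lusztig:2009:unipotent-classes-and-special-Weyl} and then used as a black box in the paragraph that follows to verify \cref{P1}--\cref{P5}. There is thus no argument here to compare your proposal against; what you have written is an attempt to reconstruct Lusztig's own proof.

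Your sketch has the right overall shape (extract $(\mu,\nu)$ and the three factors from the combinatorics of $\lambda$, then check the three bullets), but the specific construction you propose does not work. With $\nu' = |\omega|$ the sum of the odd parts and $\nu := \nu'-1$, the constraint $2\mu+\nu=n$ gives $2\mu = n-\nu'+1$; yet your even block $\eta$ has size $2n-\nu'$, and ``splitting $\eta$ symmetrically between two copies of $\D_\mu$'' would need $|\eta|$ to match $4\mu = 2n-2\nu'+2$, which fails except in degenerate cases. The phrase ``the special symbol constructed from $\eta$ via the standard recipe'' is too vague to determine $[\Lambda_1]$ or to verify $f_{[\Lambda_1]}^2 \cdot f_{[\Lambda_2]} = |A_{\btG}(u)|$. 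Lusztig's actual recipe in \cite[\S6]{lusztig:2009:unipotent-classes-and-special-Weyl} does not simply route odd parts to the $\A$ factor and even parts to the $\D$ factors; the decomposition is governed by his interval combinatorics on the sequence $y_*$ (the sets $\mathcal{I} \in \mathfrak{J}(y_*)$ alluded to just before \cref{prop:lusztig-typeD-1}), and the hypothesis $\kappa_{\D}(u)=1$ enters through the sizes of those intervals rather than through multiplicities of odd parts alone. If you want to supply a proof here you will need to reproduce that interval construction rather than the naive odd/even split.
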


\begin{pa}
By the results in \cref{sec:quasi-isolated} there exists a semisimple element $\tilde{s} \in \btT_0^{\star}$ contained in an $F^{\star}$-stable conjugacy class such that $\bW(\tilde{s})=\bW(\D_{\mu}\A_{\nu}\D_{\mu})$. We have $(\tilde{s},\bW^{\star}(\tilde{\mathcal{F}})) \in \mathcal{T}_{\btG}$ because $[\Lambda_1]\boxtimes [\Lambda_2] \boxtimes [\Lambda_1]$ is invariant under all graph automorphisms. We now concern ourselves with \cref{P2}, which we must consider for each isomorphism type. Firstly by \cite[\S10.6]{lusztig:1984:intersection-cohomology-complexes} and \cite[\S14.3]{lusztig:1984:intersection-cohomology-complexes} we have
\begin{equation*}
|Z_{\bG}(u)| = \begin{cases}
2 &\text{if }\bG\text{ is a half-spin group},\\
4 &\text{if }\bG\text{ is simply connected}.
\end{cases}
\end{equation*}
In both cases we have $Z_{\bG}(u) \cong Z(\bG)$ and $A_{\bG^{\star}}(s) \cong \Ker(\delta_{\simc}^{\star})$ so \cref{P2} holds by \cref{cor:centre-duality} and the fact that $\psi$ is invariant under all graph automorphisms. Let us now consider \cref{P5}. Every character $\chi \in \mathcal{F}$ is parameterised by symbols $[\Lambda_1] \boxtimes [\Lambda_2] \boxtimes [\Lambda_3]$, where neither $[\Lambda_1]$ nor $[\Lambda_3]$ are degenerate. However we have $|\Stab_{A_{G^{\star}}(s)}(\chi)| = |Z_{\bG}(u)^F|$ if and only if $[\Lambda_1] = [\Lambda_3]$. Hence it is clear that \cref{P5} does not always hold, however this is one of the exceptions mentioned in \cref{prop:A}.
\end{pa}

\begin{prop}[Lusztig, {\cite[6.13]{lusztig:2009:unipotent-classes-and-special-Weyl}}]\label{prop:lusztig-typeD-3}
Let $\mathcal{O} \in \Clu(\bG)^F$ be a unipotent class with class representative $u \in \mathcal{O}^F$ such that $\kappa_{\D}(u) = 0$, i.e.\ $\mathcal{O}$ is a degenerate unipotent class. There exists a parabolic subgroup $\bW'$, which is either $\bW(\A_{n-1}^+)$ or $\bW(\A_{n-1}^-)$, and a special irreducible character $[\Lambda] \in \Irr(\bW')$ such that:
\begin{itemize}
	\item $f_{[\Lambda]} = |A_{\btG}(u)|$,
	\item $\Stab_{\mathcal{A}}([\Lambda]) = \begin{cases} \langle \sigma_n \rangle &\text{if }[\Lambda] \in \Irr(\bW(\A_{n-1}^+)),\\ \langle \sigma_{n-1} \rangle &\text{if }[\Lambda] \in \Irr(\bW(\A_{n-1}^-)),\end{cases}$
	\item $j_{\bW'}^{\bW}([\Lambda]) = \rho(\mathcal{O})$.
\end{itemize}
\end{prop}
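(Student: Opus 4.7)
The plan is to realise $\rho(\mathcal{O})$ as a $j$-induction from a full type-$\A_{n-1}$ parabolic by factoring through an intermediate parabolic. Since $\kappa_{\D}(u)=0$, the class $\mathcal{O}$ is degenerate, which forces $n$ to be even and $\mathcal{O} = \mathcal{O}_\lambda^{\pm}$ for a partition $\lambda = (2\eta_1,2\eta_1,\dots,2\eta_s,2\eta_s)$ associated to some $\eta \vdash n/2$. By \cref{eq:degen-spring-correspondence} we have $\rho(\mathcal{O}) = j_{\bW(\A_{\eta^*}^{\epsilon})}^{\bW}(\sgn)$ for a sign $\epsilon \in \{+,-\}$ determined by the sign of $\mathcal{O}$ together with the congruence of $n\pmod{4}$. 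Since $\bW(\A_{\eta^*}^{\epsilon})$ is contained in the parabolic $\bW' := \bW(\A_{n-1}^{\epsilon})$, the transitivity of Lusztig--MacDonald--Spaltenstein induction yields
\begin{equation*}
\rho(\mathcal{O}) = j_{\bW'}^{\bW}\bigl([\Lambda]\bigr), \qquad \text{where } [\Lambda] := j_{\bW(\A_{\eta^*}^{\epsilon})}^{\bW'}(\sgn) \in \Irr(\bW').
\end{equation*}
This immediately establishes the third condition of the proposition.

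Since $\bW' \cong S_n$ is of type $\A_{n-1}$, every irreducible character is special and each family is a singleton; in particular $[\Lambda]$ is special and $f_{[\Lambda]} = 1$. Thus the first condition reduces to checking that $|A_{\btG}(u)| = 1$ for every degenerate class in a simply connected or half-spin group of type $\D_n$. Using the identification $A_{\btG}(u) \cong A_{\bG}(u)/Z_{\bG}(u)$ -- a consequence of $\iota(\bG)$ being the derived subgroup of $\btG$ together with $Z(\btG)^\circ \subseteq C_{\btG}(u)^\circ$ -- this follows by combining the half-spin component-group calculation given in the subsection \emph{Component Groups in Half-Spin Groups} with the analogous description in \cite[\S10.6,\,\S14.3]{lusztig:1984:intersection-cohomology-complexes} for the spin case, together with knowledge of $|Z_{\bG}(u)|$ for degenerate classes.

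Finally, the normaliser $\mathcal{A}_{\mathbb{I}} \leqslant \mathcal{A}$ of $\bW_{\mathbb{I}} = \bW(\A_{n-1}^{\epsilon})$ consists precisely of those elements of $\mathcal{A}$ that preserve the two-element set of removed simple reflections. From the explicit action of $\mathcal{A} = \{1,\sigma_1,\sigma_{n-1},\sigma_n\}$ on $\tilde{\Delta}$ recalled in the subsection \emph{The Group $\mathcal{A}$}, one checks directly that $\mathcal{A}_{\mathbb{I}} = \langle \sigma_n \rangle$ when $\epsilon = +$ and $\mathcal{A}_{\mathbb{I}} = \langle \sigma_{n-1} \rangle$ when $\epsilon = -$. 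The non-trivial generator acts on $\bW' \cong S_n$ as the graph automorphism of the Dynkin diagram of type $\A_{n-1}$, which is inner (induced by conjugation by the longest element of $S_n$), and hence fixes every irreducible character; this gives $\Stab_{\mathcal{A}}([\Lambda]) = \mathcal{A}_{\mathbb{I}}$. The main technical obstacle is the bookkeeping of the $\pm$ labelling with its $n\pmod{4}$ dichotomy inherited from \cref{eq:degen-spring-correspondence}, together with verifying $|A_{\btG}(u)| = 1$ uniformly across the spin and half-spin subcases.
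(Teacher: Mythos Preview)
The paper does not give its own proof of this proposition: it is stated as a citation of Lusztig \cite[6.13]{lusztig:2009:unipotent-classes-and-special-Weyl}, and the surrounding text only \emph{uses} the result (together with the transitivity argument in \cref{rem:degen-unip-class} and the paragraphs after it) to pin down which of $\bW(\A_{n-1}^{\pm})$ is the correct parabolic in the half-spin case and to verify \cref{P2}. So there is no ``paper's proof'' to compare against; your task is really to check that your argument stands on its own.

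It does. Your proof is correct and in fact assembles precisely the ingredients the paper develops elsewhere: the explicit degenerate Springer correspondence \cref{eq:degen-spring-correspondence}, transitivity of $j$-induction through $\bW(\A_{\eta^*}^{\epsilon}) \leqslant \bW(\A_{n-1}^{\epsilon})$, the fact that type-$\A$ families are singletons (so $f_{[\Lambda]}=1$), the vanishing $|A_{\bG_{\ad}}(u_{\ad}^{\pm})|=1$ for degenerate classes, and the explicit action of $\mathcal{A}$ on $\tilde{\Delta}$. Two minor simplifications you could make: for $|A_{\btG}(u)|=1$ it is cleaner to observe directly that $A_{\btG}(u)\cong A_{\bG_{\ad}}(u_{\ad})$ (since $\btG$ and $\bG_{\ad}$ differ by a central torus) and invoke the paper's statement that $|A_{\bG_{\ad}}(u_{\ad}^{\pm})|=1$, rather than going through $A_{\bG}(u)/Z_{\bG}(u)$ separately for spin and half-spin; and your observation that the graph automorphism of $\A_{n-1}$ is conjugation by the longest element of $S_n$ (hence inner, hence character-trivial) is exactly the right way to conclude $\Stab_{\mathcal{A}}([\Lambda])=\mathcal{A}_{\mathbb{I}}$.
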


\begin{rem}\label{rem:degen-unip-class}
In \cite[6.3]{lusztig:2009:unipotent-classes-and-special-Weyl} Lusztig describes two ways to construct a parabolic subgroup of $\bW$ which is of type $\A_{n-1}$ and he denotes these by $W_0'\times S_n^{(\lambda)} \times W_0'$ with $\lambda = 0$ or 3. It is easily checked that the parabolic subgroup with $\lambda=0$ corresponds to $\bW(\A_{n-1}^+)$ and the parabolic subgroup with $\lambda=3$ corresponds to $\bW(\A_{n-1}^-)$. Unless $\bG$ is a half-spin group the choice of parabolic subgroup is immaterial to our result. However when $\bG$ is a half-spin group this choice is important and we will explain below that it can be made concrete by the transitivity of $j$-induction.
\end{rem}

\begin{rem}
As mentioned above we have a degenerate unipotent class is $F$-stable if and only if $\tau$ is trivial. In particular we may assume until the end of this section that $F$ is such a Frobenius endomorphism.
\end{rem}

\begin{pa}
By the results in \cref{sec:quasi-isolated} there exists a semisimple element $\tilde{s} \in \btT_0^{\star}$ which lies in an $F^{\star}$-stable conjugacy class such that $\bW(\tilde{s}) = \bW(\A_{n-1}^{\pm})$. We have $(\tilde{s},\bW^{\star}(\tilde{\mathcal{F}})) \in \mathcal{T}_{\btG}$ because $\bW^{\star}(\tilde{\mathcal{F}})$ contains only the character $[\Lambda]$ and any irreducible character of a Weyl group of type $\A$ is invariant under all graph automorphisms.
\end{pa}

\begin{pa}
We now concern ourselves with \cref{P2}, which we must consider for each isomorphism type. Assume $\bG$ is simply connected then by \cite[\S14.3]{lusztig:1984:intersection-cohomology-complexes} we have $|Z_{\bG}(u)|=2$ and by the results in \cref{sec:quasi-isolated} we have $|A_{\bG^{\star}}(s)|=2$, so we at least have $|A_{\bG^{\star}}(s)| = |Z_{\bG}(u)|$. The Frobenius endomorphism $F_q$ cannot exchange the elements $\hat{z}_{n-1}$ and $\hat{z}_n$ because these are both elements of order 2 and $q$ is odd, it then follows that $Z(\bG)^F = Z(\bG)$ and $\Ker(\delta_{\simc}^{\star})^{F^{\star}} = \Ker(\delta_{\simc}^{\star})$. Hence $|A_{\bG^{\star}}(s)^{F_s^{\star}}| = |Z_{\bG}(u)^F|$ and by \cref{prop:lusztig-typeD-3} we have $\Stab_{A_{G^{\star}}(s)}(\psi) = A_{G^{\star}}(s)$ so \cref{P2} holds.
\end{pa}

\begin{pa}
Let us now deal with the case where $\bG$ is a half-spin group. We may assume $\mathcal{O}$ is of the form $\mathcal{O}_{\lambda}^{\pm}$ and $\eta \vdash n/2$ is the partition constructed from $\lambda$ in \cref{pa:degen-springer-corr}. We must first clarify the choice over the parabolic subgroup in \cref{prop:lusztig-typeD-3}. The subgroup $\bW(\A_{\eta^*}^{\pm})$ is a parabolic subgroup of $\bW(\A_{n-1}^{\pm})$ so by the transitivity of $j$-induction we have
\begin{equation*}
j_{\bW(\A_{\eta^*}^{\pm})}^{\bW}(\sgn) = j_{\bW(\A_{n-1}^{\pm})}^{\bW}(j_{\bW(\A_{\eta^*}^{\pm})}^{\bW(\A_{n-1}^{\pm})}(\sgn)).
\end{equation*}
Let $\bW(\A_{\eta^*}')$ denote either $\bW(\A_{\eta^*}^+)$ or $\bW(\A_{\eta^*}^-)$. By this remark we must have the parabolic subgroup $\bW'$ from \cref{prop:lusztig-typeD-3} contains the parabolic subgroup $\bW(\A_{\eta^*}')$ for which $\rho(\mathcal{O}) = j_{\bW(\A_{\eta^*}')}^{\bW}(\sgn)$. From \cref{eq:degen-spring-correspondence} we see that this depends upon the congruence of $n$ modulo 4 so we consider this in two cases. Before we consider the two cases let us first introduce the following notation. We denote by $\bW^{\star}(\A_{n-1}^+)$ the parabolic subgroup of $\bW^{\star}$ generated by the reflections $\mathbb{T}\setminus\{t_n\}$ and by $\bW^{\star}(\A_{n-1}^-)$ the parabolic subgroup of $\bW^{\star}$ generated by the reflections $\mathbb{T}\setminus\{t_{n-1}\}$. Similarly we denote by $\bW^{\star}(\A_{\eta*}^{\pm})$ the appropriate parabolic subgroup of $\bW^{\star}(\A_{n-1}^{\pm})$. 

Assume $n \equiv 0 \pmod{4}$ then the character $\rho(\mathcal{O}_{\lambda}^{\pm})$ is the induced character $j_{\bW(\A_{n-1}^{\pm})}^{\bW}(\sgn)$. Under the duality described in \cref{sec:clarification-typeD} we see that $s_i^{\star} = t_i$ for all $1 \leqslant i \leqslant n$. Hence under this isomorphism the subgroup $\bW(\A_{n-1}^{\pm})$ is sent to the subgroup $\bW^{\star}(\A_{n-1}^{\pm})$. In particular the element $\tilde{s} \in \btT_0^{\star}$ is such that $\bW^{\star}(\tilde{s}) = \bW^{\star}(\A_{n-1}^{\pm})$. From \cref{tab:quasi-isolated-Dn} we see that we always have $|A_{\bG^{\star}}(s)| = |Z_{\bG}(u)|$, therefore \cref{P2} holds by \cref{lem:comp-group-fixed-point} as $\Stab_{A_{G^{\star}}(s)}(\psi) = A_{G^{\star}}(s)$.

Assume $n \equiv 2 \pmod{4}$ then the character $\rho(\mathcal{O}_{\lambda}^{\pm})$ is the induced character $j_{\bW(\A_{\eta^*}^{\mp})}^{\bW}(\sgn)$. Under the duality described in \cref{sec:clarification-typeD} we see that $s_i^{\star} = t_i$ for all $1 \leqslant i \leqslant n-2$. However $s_{n-1}^{\star} = t_n$ and $s_n^{\star} = t_{n-1}$, hence under the isomorphism $\bW \to \bW^{\star}$ the subgroup $\bW(\A_{n-1}^{\mp})$ is sent to the subgroup $\bW^{\star}(\A_{n-1}^{\pm})$. In particular the element $\tilde{s} \in \btT_0^{\star}$ is such that $\bW^{\star}(\tilde{s}) = \bW^{\star}(\A_{n-1}^{\pm})$. From \cref{tab:quasi-isolated-Dn} we see that we always have $|A_{\bG^{\star}}(s)| = |Z_{\bG}(u)|$, therefore \cref{P2} holds by \cref{lem:comp-group-fixed-point} as $\Stab_{A_{G^{\star}}(s)}(\psi) = A_{G^{\star}}(s)$.

Note finally that \cref{P5} holds trivially as $\mathcal{F}$ contains only one character.
\end{pa}

\section{Exceptional Types}
\begin{pa}
In what follows we give tables for the simple groups of exceptional type containing the information described in \cref{sec:gen-strategy}, (for types $\G_2$, $\F_4$ and $\E_8$ this information is taken from \cite[\S5]{hezard:2004:thesis}). By the results in \cref{sec:quasi-isolated} there exists a semisimple element $\tilde{s} \in \btT_0^{\star}$ which lies in an $F^{\star}$-stable conjugacy class such that $\bW^{\star}(\tilde{s})$ is one of the parahoric subgroups listed below. One easily verifies that the special characters we list are invariant under all graph automorphisms hence $(\tilde{s},\bW^{\star}(\tilde{\mathcal{F}})) \in \mathcal{T}_{\btG}$.

Let us now consider \cref{P4}. If $\tilde{s}$ is isolated then this follows from \cite[Proposition 2.3]{geck-hezard:2008:unipotent-support} as in the case of classical type groups. The only cases where this is not the case are when $\bG$ is of type $\E_6$ or $\E_7$. In these cases one may easily check that \cref{P4} holds by using Geck's {\sf PyCox} program, (see \cite{geck:2012:pycox-computing}). In particular assume {\tt W} is the Weyl group $\bW$ and {\tt H} is the reflection subgroup $\bW(\tilde{s})$ then running the command
\begin{verbatim}
    >>> X = inductiontable(H,W,invchar=(lambda G:dimBu(G)))
\end{verbatim}
in {\sf PyCox} produces the truncated induction table with respect to the $d$-function. Assume \verb+j+ is the index of $\chi$ in the list \verb+X[`charH']+ then one only has to check that there is a unique \verb+i+ such that \verb+X[`scalar'][i][j]+ is non-zero. With this we see that \cref{P4} holds hence this completes the proof of \cref{prop:A} if $\bG$ is adjoint.
\end{pa}

\begin{pa}
Let us now consider \cref{P2} and \cref{P5} in the cases where $\bG$ is simply connected of type $\E_6$ or $\E_7$. From the tables below we see that $|A_{\bG^{\star}}(s)| = |Z_{\bG}(u)|$. By \cref{lem:invariance-unipotent} it is clear that the character $\psi$ will be invariant under any graph automorphism induced by an element of $A_{\bG^{\star}}(s)$ so we will have $\Stab_{A_{G^{\star}}(s)}(\psi) = A_{G^{\star}}(s)$ hence \cref{P2} holds by \cref{lem:comp-group-fixed-point}.

Let us now consider the validity of \cref{P5}. If $|A_{\btG}(u)| = 1$ then $|\mathcal{F}| = 1$ so it is obvious that $\mathfrak{X}_{\mathcal{F}} = \emptyset$. Assume $\mathcal{O}$ is such that $|A_{\btG}(u)| = |A_{\bG}(u)|$ then $\mathfrak{X}_{\mathcal{F}} = \emptyset$ because $C_{\bG^{\star}}(s)$ is connected so clearly $|\Stab_{A_{G^{\star}}(s)}(\psi)| = 1$ for all $\psi \in \mathcal{F}$. We need now only consider when $\mathcal{O}$ is the class $\E_6(a_3)$ in $\E_6$ or $\mathcal{O}$ is the class $\D_4(a_1) + \A_1$, $\E_7(a_3)$, $\E_7(a_4)$ or $\E_7(a_5)$ in $\E_7$. However by \cref{lem:invariance-unipotent} we know all unipotent characters in the family are invariant under all graph automorphisms which shows $\mathfrak{X}_{\mathcal{F}} = \emptyset$.
\end{pa}

\subsection{\texorpdfstring{Type $\G_2$}{Type G2}}
\begin{longtable}{>{$}c<{$}>{$}c<{$}>{$}c<{$}>{$}c<{$}}
\toprule
\mathcal{O} & |A_{\bG}(u)| & \bW^{\star}(\tilde{s}) & \chi \tabularnewline
\midrule
\endhead
\bottomrule
\endfoot
\bottomrule
\endlastfoot
1 & 1 & \G_2 & \phi_{1,6} \tabularnewline
\A_1 & 1 & \A_2 & [123] \tabularnewline
\tilde{\A}_1 & 1 & \A_1\A_1 & [12]\boxtimes[12] \tabularnewline
\G_2(a_1) & 6 & \G_2 & \phi_{2,1} \tabularnewline
\G_2 & 1 & \G_2 & \phi_{1,0}
\end{longtable}

\subsection{\texorpdfstring{Type $\F_4$}{Type F4}}
\begin{longtable}{>{$}c<{$}>{$}c<{$}>{$}c<{$}>{$}c<{$}}
\toprule
\mathcal{O} & |A_{\bG}(u)| & \bW^{\star}(\tilde{s}) & \chi \tabularnewline
\midrule
\endhead
\bottomrule
\endfoot
\bottomrule
\endlastfoot
1 & 1 & \F_4 & 1_4 \tabularnewline
\A_1 & 1 & \B_4 & [01234;1234] \tabularnewline
\tilde{\A}_1 & 2 & \F_4 & 4_5 \tabularnewline
\A_1+\tilde{\A}_1 & 1 & \F_4 & 9_4 \tabularnewline
\A_2 & 2 & \B_4 & [0124;123] \tabularnewline
\tilde{\A}_2 & 1 & \F_4 & 8_2 \tabularnewline
\A_2+\tilde{\A}_1 & 1 & \A_3\A_1 & [1234]\boxtimes[12] \tabularnewline
\B_2 & 2 & \B_4 & [023;12] \tabularnewline
\tilde{\A}_2+\A_1 & 1 & \A_2\A_2 & [123]\boxtimes[123] \tabularnewline
\C_3(a_1) & 2 & \B_4 & [03;2] \tabularnewline
\F_4(a_3) & 24 & \F_4 & 12 \tabularnewline
\B_3 & 1 & \F_4 & 8_1 \tabularnewline
\C_3 & 1 & \F_4 & 8_3 \tabularnewline
\F_4(a_2) & 2 & \B_4 & [013;13] \tabularnewline
\F_4(a_1) & 2 & \F_4 & 4_2 \tabularnewline
\F_4 & 1 & \F_4 & 1_1 \tabularnewline
\end{longtable}

\subsection{\texorpdfstring{Type $\E_6$}{Type E6}}
\begin{longtable}{>{$}c<{$}>{$}c<{$}>{$}c<{$}>{$}c<{$}>{$}c<{$}}
\toprule
\mathcal{O} & |A_{\btG}(u)| & |A_{\bG}(u)| & \bW^{\star}(\tilde{s}) & \chi \tabularnewline
\midrule
\endhead
\bottomrule
\endfoot
\bottomrule
\endlastfoot
1 & 1 & 1 & \E_6 & 1_p' \tabularnewline
\A_1 & 1 & 1 & \E_6 & 6_p' \tabularnewline
2\A_1 & 1 & 1 & \E_6 & 20_p' \tabularnewline
3\A_1 & 1 & 1 & \A_5\A_1 & [123456]\boxtimes[12] \tabularnewline
\A_2 & 2 & 2 & \E_6 & 30_p' \tabularnewline
\A_2 + \A_1 & 1 & 1 & \E_6 & 64_p' \tabularnewline
2\A_2 & 1 & 3 & \D_4 & [0123;1234] \tabularnewline
\A_2 + 2\A_1 & 1 & 1 & \E_6 & 60_p' \tabularnewline
\A_3 & 1 & 1 & \E_6 & 81_p' \tabularnewline
2\A_2 + \A_1 & 1 & 3 & \A_2\A_2\A_2 & [123]\boxtimes[123]\boxtimes[123] \tabularnewline
\A_3+\A_1 & 1 & 1 & \A_5\A_1 & [1245]\boxtimes[12] \tabularnewline
\D_4(a_1) & 6 & 6 & \E_6 & 80_s \tabularnewline
\A_4 & 1 & 1 & \E_6 & 81_p \tabularnewline
\D_4 & 1 & 1 & \E_6 & 24_p \tabularnewline
\A_4+\A_1 & 1 & 1 & \E_6 & 60_p \tabularnewline
\A_5 & 1 & 3 & \A_1\A_1\A_1\A_1 & [12]\boxtimes[12]\boxtimes[12]\boxtimes[12] \tabularnewline
\D_5(a_1) & 1 & 1 & \E_6 & 64_p \tabularnewline
\E_6(a_3) & 2 & 6 & \D_4 & [02;13] \tabularnewline
\D_5 & 1 & 1 & \E_6 & 20_p \tabularnewline
\E_6(a_1) & 1 & 3 & \D_4 & [1;3] \tabularnewline
\E_6 & 1 & 3 & \A_2\A_2\A_2 & [3]\boxtimes[3]\boxtimes[3]
\end{longtable}

\subsection{\texorpdfstring{Type $\E_7$}{Type E7}}
\begin{longtable}{>{$}c<{$}>{$}c<{$}>{$}c<{$}>{$}c<{$}>{$}c<{$}}
\toprule
\mathcal{O} & |A_{\btG}(u)| & |A_{\bG}(u)| & \bW^{\star}(\tilde{s}) & \chi \tabularnewline
\midrule
\endhead
\bottomrule
\endfoot
\bottomrule
\endlastfoot
1 & 1 & 1 & \E_7 & 1_a' \tabularnewline
\A_1 & 1 & 1 & \E_7 & 7_a \tabularnewline
2\A_1 & 1 & 1 & \E_7 & 27_a' \tabularnewline
(3\A_1)'' & 1 & 2 & \E_6 & 1_p' \tabularnewline
(3\A_1)' & 1 & 1 & \A_1\D_6 & [12]\boxtimes [012345;123456] \tabularnewline
\A_2 & 2 & 2 & \E_7 & 56_a \tabularnewline
4\A_1 & 1 & 2 & \A_7 & [12345678] \tabularnewline
\A_2 + \A_1 & 2 & 2 & \E_7 & 120_a' \tabularnewline
\A_2 + 2\A_1 & 1 & 1 & \E_7 & 189_b \tabularnewline
\A_3 & 1 & 1 & \E_7 & 210_a' \tabularnewline
2\A_2 & 1 & 1 & \E_7 & 168_a' \tabularnewline
\A_2 + 3\A_1 & 1 & 2 & \A_7 & [1234568] \tabularnewline
(\A_3+\A_1)'' & 1 & 2 & \E_6 & 20_p' \tabularnewline
2\A_2+\A_1 & 1 & 1 & \A_2\A_5 & [123]\boxtimes[123456] \tabularnewline
(\A_3+\A_1)' & 1 & 1 & \A_1\D_6 & [12]\boxtimes[0134;1234] \tabularnewline
\D_4(a_1) & 6 & 6 & \E_7 & 315_a \tabularnewline
\A_3+2\A_1 & 1 & 2 & \A_7 & [123467] \tabularnewline
\D_4 & 1 & 1 & \A_2\A_5 & [3]\boxtimes[123456] \tabularnewline
\D_4(a_1)+\A_1 & 2 & 4 & \E_6 & 30_p' \tabularnewline
\A_3 + \A_2 & 2 & 2 & \A_1\D_6 & [12]\boxtimes[0124;1235] \tabularnewline
\A_4 & 2 & 2 & \A_1\D_6 & [2]\boxtimes[0124;1235] \tabularnewline
\A_3+\A_2+\A_1 & 1 & 2 & \A_3\A_3\A_1 & [1234]\boxtimes[1234]\boxtimes[12] \tabularnewline
(\A_5)'' & 1 & 2 & \D_4\A_1\A_1 & [0123;1234] \boxtimes [2] \boxtimes [2] \tabularnewline
\D_4+\A_1 & 1 & 2 & \A_7 & [2345] \tabularnewline
\A_4+\A_1 & 2 & 2 & \E_7 & 512_a' \tabularnewline
\D_5(a_1) & 2 & 2 & \E_7 & 420_a \tabularnewline
\A_4+\A_2 & 1 & 1 & \E_7 & 210_b \tabularnewline
(\A_5)' & 1 & 1 & \A_1\D_6 & [12]\boxtimes [12;23] \tabularnewline
\A_5+\A_1 & 1 & 2 & \A_2\A_2\A_2 & [123]\boxtimes[123]\boxtimes[123] \tabularnewline
\D_5(a_1)+\A_1 & 1 & 2 & \A_7 & [1346] \tabularnewline
\D_6(a_2) & 1 & 2 & \A_7 & [1256] \tabularnewline
\E_6(a_3) & 2 & 2 & \E_7 & 405_a \tabularnewline
\D_5 & 1 & 1 & \E_7 & 189_c' \tabularnewline
\E_7(a_5) & 6 & 12 & \E_6 & 80_s \tabularnewline
\A_6 & 1 & 1 & \E_7 & 105_b \tabularnewline
\D_5+\A_1 & 1 & 2 & \A_7 & [236] \tabularnewline
\D_6(a_1) & 1 & 2 & \A_7 & [1238] \tabularnewline
\E_7(a_4) & 2 & 4 & \D_4\A_1\A_1 & [02;13] \boxtimes [12] \boxtimes [12] \tabularnewline
\D_6 & 1 & 2 & \A_7 & [4567] \tabularnewline
\E_6(a_1) & 2 & 2 & \E_7 & 120_a \tabularnewline
\E_6 & 1 & 1 & \E_7 & 21_b' \tabularnewline
\E_7(a_3) & 2 & 4 & \E_6 & 30_p \tabularnewline
\E_7(a_2) & 1 & 2 & \E_6 & 20_p \tabularnewline
\E_7(a_1) & 1 & 2 & \E_6 & 6_p \tabularnewline
\E_7 & 1 & 2 & \A_7 & [8]
\end{longtable}

\subsection{\texorpdfstring{Type $\E_8$}{Type E8}}
\begin{longtable}{>{$}c<{$}>{$}c<{$}>{$}c<{$}>{$}c<{$}}
\toprule
\mathcal{O} & |A_{\bG}(u)| & \bW^{\star}(\tilde{s}) & \chi \tabularnewline
\midrule
\endhead
\bottomrule
\endfoot
\bottomrule
\endlastfoot
1 & 1 & \E_8 & 1_x' \tabularnewline
\A_1 & 1 & \E_8 & 8_z' \tabularnewline
2\A_1 & 1 & \E_8 & 35_x' \tabularnewline
3\A_1 & 1 & \E_7\A_1 & 1_a'\boxtimes [12] \tabularnewline
\A_2 & 2 & \E_8 & 112_z' \tabularnewline
4\A_1 & 1 & \D_8 & [01234567;12345678] \tabularnewline
\A_2 + \A_1 & 2 & \E_8 & 210_x' \tabularnewline
\A_2 + 2\A_1 & 1 & \E_8 & 560_z' \tabularnewline
\A_3 & 1 & \E_8 & 567_x' \tabularnewline
\A_2 + 3\A_1 & 1 & \D_8 & [0123457;1234567] \tabularnewline
2\A_2 & 2 & \E_8 & 700_x' \tabularnewline
2\A_2 + \A_1 & 1 & \E_6\A_2 & 1_p'\boxtimes[123] \tabularnewline
\A_3 + \A_1 & 1 & \E_7\A_1 & 27_a'\boxtimes[12] \tabularnewline
\D_4(a_1) & 6 & \E_8 & 1400_z' \tabularnewline
\D_4 & 1 & \E_8 & 525_x' \tabularnewline
2\A_2 + 2\A_1 & 1 & \A_8 & [123456789] \tabularnewline
\A_3 + 2\A_1 & 1 & \D_8 & [012356;123456] \tabularnewline
\D_4(a_1) + \A_1 & 6 & \E_8 & 1400_x' \tabularnewline
\A_3+\A_2 & 2 & \E_7\A_1 & 56_a\boxtimes[12] \tabularnewline
\A_4 & 2 & \E_8 & 2268_x' \tabularnewline
\A_3+\A_2+\A_1 & 1 & \A_7\A_1 & [12345678]\boxtimes[12] \tabularnewline
\D_4+\A_1 & 1 & \A_7\A_1 & [12345678]\boxtimes[2] \tabularnewline
\D_4(a_1)+\A_2 & 2 & \E_8 & 2240_x' \tabularnewline
\A_4+\A_1 & 2 & \E_8 & 4096_x' \tabularnewline
2\A_3 & 1 & \D_5\A_3 & [01234;12345]\boxtimes [1234] \tabularnewline
\D_5(a_1) & 2 & \E_8 & 2800_z' \tabularnewline
\A_4+2\A_1 & 2 & \E_8 & 4200_x' \tabularnewline
\A_4+\A_2 & 1 & \E_8 & 4536_z' \tabularnewline
\A_5 & 1 & \E_7\A_1 & 168_a'\boxtimes[12] \tabularnewline
\D_5(a_1)+\A_1 & 1 & \E_8 & 6075_x' \tabularnewline
\A_4+\A_2+\A_1 & 1 & \E_8 & 2835_x' \tabularnewline
\D_4+\A_2 & 2 & \D_8 & [0234;1235] \tabularnewline
\E_6(a_3) & 2 & \E_8 & 5600_z' \tabularnewline
\D_5 & 1 & \E_8 & 2100_y \tabularnewline
\A_4+\A_3 & 1 & \A_4\A_4 & [12345]\boxtimes[12345] \tabularnewline
\A_5 + \A_1 & 1 & \A_2\A_1\A_5 & [123]\boxtimes[12]\boxtimes[123456] \tabularnewline
\D_5(a_1)+\A_2 & 1 & \D_5\A_3 & [0124;1234]\boxtimes[1234] \tabularnewline
\D_6(a_2) & 2 & \D_8 & [0134;1245] \tabularnewline
\E_6(a_3)+\A_1 & 2 & \E_6\A_2 & 30_p'\boxtimes[123] \tabularnewline
\E_7(a_5) & 6 & \E_7\A_1 & 315_a\boxtimes[12] \tabularnewline
\D_5+\A_1 & 1 & \E_7\A_1 & 105_c'\boxtimes[12] \tabularnewline
\E_8(a_7) & 120 & \E_8 & 4480_y \tabularnewline
\A_6 & 1 & \E_8 & 4200_z \tabularnewline
\D_6(a_1) & 2 & \E_8 & 5600_z \tabularnewline
\A_6+\A_1 & 1 & \E_8 & 2835_x \tabularnewline
\E_7(a_4) & 2 & \E_7\A_1 & 420_a'\boxtimes[12] \tabularnewline
\E_6(a_1) & 2 & \E_8 & 2800_z \tabularnewline
\D_5+\A_2 & 2 & \D_8 & [024;134] \tabularnewline
\D_6 & 1 & \A_1\A_7 & [2]\boxtimes[2345] \tabularnewline
\E_6 & 1 & \E_8 & 525_x \tabularnewline
\D_7(a_2) & 2 & \E_8 & 4200_x \tabularnewline
\A_7 & 1 & \E_7\A_1 & 210_b\boxtimes[12] \tabularnewline
\E_6(a_1)+\A_1 & 2 & \E_8 & 4096_z \tabularnewline
\E_7(a_3) & 2 & \E_8 & 2268_x \tabularnewline
\E_8(b_6) & 6 & \E_6\A_2 & 80_s\boxtimes[123] \tabularnewline
\D_7(a_1) & 2 & \E_7\A_1 & 405_a\boxtimes[12] \tabularnewline
\E_6+\A_1 & 1 & \A_2\A_1\A_5 & [123]\boxtimes[2]\boxtimes[234] \tabularnewline
\E_7(a_2) & 1 & \A_2\A_1\A_5 & [3]\boxtimes[12]\boxtimes[1245] \tabularnewline
\E_8(a_6) & 6 & \E_8 & 1400_x \tabularnewline
\D_7 & 1 & \A_2\A_1\A_5 & [123]\boxtimes[12]\boxtimes[34] \tabularnewline
\E_8(b_5) & 6 & \E_8 & 1400_z \tabularnewline
\E_7(a_1) & 1 & \E_8 & 567_x \tabularnewline
\E_8(a_5) & 2 & \E_8 & 700_x \tabularnewline
\E_8(b_4) & 2 & \E_7\A_1 & 120_a\boxtimes[12] \tabularnewline
\E_7 & 1 & \A_1\A_7 & [2]\boxtimes[45] \tabularnewline
\E_8(a_4) & 2 & \E_8 & 210_x \tabularnewline
\E_8(a_3) & 2 & \E_8 & 112_z \tabularnewline
\E_8(a_2) & 1 & \E_8 & 35_x \tabularnewline
\E_8(a_1) & 1 & \E_8 & 8_z \tabularnewline
\E_8 & 1 & \E_8 & 1_x \tabularnewline
\end{longtable}

\setstretch{0.96}


\renewcommand*{\bibfont}{\small}
\printbibliography

\setstretch{1.2}
\renewcommand{\indexname}{Index of Notation}
\printindex
\end{document}